\newtheorem{theorem}{Theorem}[section]
\newtheorem*{theorem*}{Theorem}
\newtheorem{lemma}[theorem]{Lemma}
\newtheorem{definition}[theorem]{Definition}
\newtheorem{proposition}[theorem]{Proposition}
\newtheorem{corollary}[theorem]{Corollary}
\newtheorem{assumption}[theorem]{Assumption}
\theoremstyle{remark}
\newtheorem{rmk}[theorem]{Remark}
\theoremstyle{remark}
\newtheorem{example}[theorem]{Example}
\def \eps{\varepsilon}
\def \lb{\left(} \def \rb{\right)}
\def \dist{\mathrm{dist}}
\newcommand{\E}{\mathbb E}
\newcommand{\EE}{\mathbb E}
\newcommand{\bZ}{\mathbf Z}
\newcommand{\cC}{\mathcal C}
\newcommand{\cD}{\mathcal D}
\newcommand{\cP}{\mathcal P}
\newcommand{\pp}{\mathbb{P}}
\newcommand{\qq}{\mathbb{Q}}
\newcommand{\rr}{\mathbb{R}}
\newcommand{\nn}{\mathbb{N}}
\newcommand{\eq}{\begin{equation}}
\newcommand{\en}{\end{equation}}
\newcommand{\bone}{\mathbf{1}}
\newcommand{\PP}{\mathbb{P}}
\newcommand{\RR}{\mathbb{R}}
\newcommand{\CC}{\mathbb{C}}
\newcommand{\wt}{\widetilde}
\newcommand{\ZZ}{\mathbb{Z}}
\newcommand{\NN}{\mathbb{N}}
\numberwithin{equation}{section}
\newcommand{\squeezeup}{\vspace{-3mm}}
\begin{document}

\title[Scaling limits of external multi-particle DLA]{Scaling limits of external multi-particle DLA on the plane and the supercooled Stefan problem}
\author{Sergey Nadtochiy, Mykhaylo Shkolnikov, and Xiling Zhang}
\address{Department of Applied Mathematics, Illinois Institute of Technology, Chicago, IL 60616.}
\email{snadtochiy@iit.edu}
\address{ORFE Department, Bendheim Center for Finance, and Program in Applied \& Computational Mathematics, Princeton University, Princeton, NJ 08544.}
\email{mshkolni@gmail.com}
\address{Department of Applied Mathematics, Illinois Institute of Technology, Chicago, IL 60616.}
\email{xzhang135@iit.edu}

\footnotetext[1]{S.~Nadtochiy is partially supported by the NSF CAREER grant DMS-1855309.}
\footnotetext[2]{M.~Shkolnikov is partially supported by the NSF grant DMS-1811723.}
\footnotetext[3]{The authors are grateful to A. Negron for conducting the numerical experiment which, in particular, produced Figure \ref{fig:2}.}

\begin{abstract}
We consider (a variant of) the external multi-particle diffusion-limited aggregation (MDLA) process of \textsc{Rosenstock} and \textsc{Marquardt} on the plane. Based on the recent findings of \cite{DT}, \cite{DNS19} in one space dimension it is natural to conjecture that the scaling limit of the growing aggregate in such a model is given by the growing solid phase in a suitable ``probabilistic'' formulation of the single-phase supercooled Stefan problem for the heat equation. To address this conjecture, we extend the probabilistic formulation from \cite{DNS19} to multiple space dimensions. We then show that the equation that characterizes the growth rate of the solid phase in the supercooled Stefan problem is satisfied by the scaling limit of the external MDLA process with an inequality, which can be strict in general. In the course of the proof, we establish two additional results interesting in their own right: (i) the stability of a ``crossing property'' of planar Brownian motion and (ii) a rigorous connection between the probabilistic solutions to the supercooled Stefan problem and its classical and weak solutions.
\end{abstract}

\maketitle

\section{Introduction and main results}
\label{se:intro}

Growth processes of \textit{diffusion-limited aggregation} (DLA) type have attracted much attention since their introduction by \textsc{Rosenstock} and \textsc{Marquardt} in \cite{RoMa} and by \textsc{Witten} and \textsc{Sander} in \cite{WiSa,WiSa2}. The former let an aggregate grow in $\mathbb{Z}^d$ by starting with $\{0\}$ and attaching a neighboring site whenever a particle from a system evolving outside of the aggregate attempts to enter the aggregate from that site, which is referred to as the multi-particle DLA (MDLA). In \cite{WiSa,WiSa2}, the aggregate grows by the same rule, except that the particles do not evolve simultaneously: each particle is generated according to a chosen distribution at the moment when the previous one is absorbed by the aggregate.
The immediate interest in the DLA type processes stemmed from the dendritic nature of the growing aggregates therein which, at least qualitatively, agrees with the shapes observed experimentally in crystallization, electrodeposition, and bacteria colony growth among many others (see \cite{Sa} for more details and pointers to the experimental literature). This has led to many numerical simulation studies of the aggregates resulting from DLA, including \cite{Vo,Vo2}, \cite{Me2} where the fractal dimension of the aggregate is estimated as $\approx\!1.7$ in two ambient space dimensions, \cite{Me1}, \cite{HaLe} where the same is discussed for somewhat modified growth mechanisms, and \cite{Vi}, \cite{BDLP} where parallels and contrasts to Laplacian growth models (such as the Hele-Shaw process of \cite{SaTa}) are drawn. We refer to the books \cite{Ka} and \cite{Me3} for a placement of DLA into the broader context of random growth processes. 

\medskip

Despite the considerable interest in DLA type processes their mathematical theory in multiple ambient space dimensions remains in its infancy, to the extent that \textsc{Sander} characterizes the DLA process of \cite{WiSa,WiSa2} as a ``devilishly difficult model to solve, even approximately'' in \cite{Sa}. For the aggregate of \cite{WiSa,WiSa2}, only an upper bound of the order $n^{\frac{2}{d+1}}$ on its radius upon the attachment of $n$ sites has been established (see \cite{Ke2}, and also \cite{Ke1}), as well as the almost sure convergence to infinity of the number of holes in it when $d=2$ (see \cite{EW}). For the aggregate in the MDLA process of \cite{RoMa}, only the linear growth of the radius when the initial particle density is sufficiently high has been shown (see \cite{Sly} and, for $d=2$, also \cite{SiSt}). Meanwhile, much more detailed results have been proved for various simplifications of the original DLA processes from \cite{WiSa,WiSa2} and \cite{RoMa}: e.g., for the DLA process of \cite{WiSa,WiSa2} in regular trees (see \cite{BPP}, and also \cite{Pi}, \cite{Dev}, \cite{AlSh}), the DLA process of \cite{WiSa,WiSa2} in infinite cylinders (see \cite{BeYa}), the DLA process of \cite{WiSa,WiSa2} in the hyperbolic plane (see \cite{El}), the directed DLA process in the plane (see \cite{Ma}), and the Hastings-Levitov aggregation process of \cite{HaLe} (see \cite{CaMa}, \cite{NoTu}, \cite{VST}, \cite{Si}). 

\medskip

Moreover, the MDLA process in one ambient space dimension, where the aggregate amounts to a set of consecutive integers, has been analyzed rigorously in the recent works \cite{KeSi2}, \cite{Sly}, \cite{ElNaSl}. Specifically, consider a system of particles that are placed at the sites of $\mathbb{Z}\backslash\{0\}$ initially as the atoms of a Poisson random measure with a mean measure that assigns mass $p$ to each element of $\mathbb{Z}\backslash\{0\}$, and each particle performs an independent continuous time simple symmetric random walk (SSRW). Then, as time $t$ goes to infinity, the endpoints of the aggregate have the order $t^{\frac{1}{2}}$ for $p<1$ (see \cite{KeSi2}), the order $t^{\frac{2}{3}}$ for $p=1$ (see \cite{ElNaSl}), and the order $t$ for $p>1$ (see \cite{Sly}). Prior to \cite{ElNaSl}, the work \cite{DT} has studied such a version of the MDLA process in one ambient space dimension, with $p=1$ and with the additional modification that the aggregate advances according to the number of particles it absorbs, so that the location of its right endpoint always gives the number of particles absorbed thus far. The paper \cite{ChSw} considers a version of the one-dimensional MDLA where the initial locations of the particles are drawn from an arbitrary symmetric distribution that puts at most one particle at each site of $\mathbb{Z}\backslash\{0\}$, and the particles evolve according to a symmetric continuous time bond exclusion process (described in detail below), so that no two particles ever occupy the same site.
It is shown in \cite{DT} that the path of the right endpoint of the aggregate on the $t^{2/3}$ scale tends to the free boundary $\Lambda$ in the \textit{single-phase supercooled Stefan problem} (1SSP) for the heat equation. A similar connection to the supercooled Stefan equation is established in \cite{ChSw}.

\medskip

The 1SSP for the heat equation refers to a choice of signs in the traditional Stefan problem for the heat equation (see \cite{Stefan1,Stefan2,Stefan3,Stefan4}, and also \cite{Bri}) which physically corresponds to the freezing of a supercooled liquid (i.e., a liquid cooled below its equilibrium freezing temperature while kept liquid). In the supercooled regime, the Stefan problem for the heat equation develops singularities already in one space dimension, in the sense that the velocity of the free boundary diverges in finite time for generic initial conditions (see \cite{Sher}). Qualitatively, this is in line with the rapid freezing of supercooled liquids noted experimentally. Despite the singularities, for the appropriate notion of a solution of the 1SSP, referred to as the ``probabilistic solution'', the global well-posedness has been recently established in \cite{DNS19}. Specifically, for appropriate initial conditions, the free boundary $\Lambda:\,[0,\infty)\to[0,\infty)$ satisfying the probabilistic formulation of the 1SSP
\begin{equation}\label{prob_sol_one}
\Lambda_t=\pp(\tau\le t),\;\; t\ge0\quad\text{for}\quad \tau:=\inf\{t\ge0:\,X_{0}+B_t\le\Lambda_t\},
\end{equation}
where $X_{0}>0$ is a random variable of prescribed distribution (representing the initial temperature distribution in the liquid phase) and $B$ is a standard Brownian motion independent thereof, exists and is unique under the minimality condition
\begin{equation}\label{eq.intro.prob_sol_one.jumpSize}
\Lambda_t-\Lambda_{t-}:=\Lambda_t-\lim_{s\uparrow t} \Lambda_s
=\inf\big\{x>0:\;\pp\big(\tau\ge t,\,X_{0}+B_t\in(\Lambda_{t-},\Lambda_{t-}+x]\big)<x\big\}.
\end{equation}

Motivated by the connection between the MDLA and the 1SSP, established by \cite{ChSw} and \cite{DT} in one space dimension, we investigate herein the connection between a natural version of MDLA and the 1SSP for the heat equation in \textit{two} space dimensions. To this end, we take $N\in\mathbb{N}$ and start with an initial aggregate $\Gamma^N_{0-}\subset\rr^d$ given by a union of translates of $[-N^{-\frac{1}{d}}/2,N^{-\frac{1}{d}}/2]^d$ centered at elements of $\mathbb{Z}^d/N^{\frac{1}{d}}$. 
In addition, we place $N$ particles at distinct points in $(\mathbb{Z}^d/N^{\frac{1}{d}})\cap(\rr^d\backslash\Gamma^N_{0-})$ according to an initial distribution that is symmetric across particles. Regarding $\mathbb{Z}^d/N^{\frac{1}{d}}$ as a hypercubic lattice, we let the particles perform a symmetric continuous time bond exclusion process. That is, we assign i.i.d.~ Poisson processes of rate $N^{\frac{2}{d}}/2$ to the bonds, and at the arrival times of these processes, any particles located at the endpoints of the respective bond jump to the other endpoint of the bond. When a particle finds itself at distance $N^{-\frac{1}{d}}/2$ from the aggregate, it is absorbed and the $N^{-\frac{1}{d}}\times\cdots\times N^{-\frac{1}{d}}$ hypercube around it is attached to the aggregate. If additional particles are at distance $N^{-\frac{1}{d}}/2$ from the aggregate upon the attachment of the new hypercube, they are absorbed and the $N^{-\frac{1}{d}}\times\cdots\times N^{-\frac{1}{d}}$ hypercubes around them are attached to the aggregate as well. This procedure continues until all non-absorbed particles are further than $N^{-\frac{1}{d}}/2$ from the aggregate. The non-absorbed particles resume the continuous time bond exclusion process on $\mathbb{Z}^d/N^{\frac{1}{d}}$, etc. Figure \ref{fig:2} depicts snapshots of the growing aggregate for $d=2$ and two values of $N$. In fact, our results hold for more general growth processes, whose (slightly technical) definition is deferred to Section \ref{se:system}. 

\begin{figure}
	\begin{center}
		\begin{tabular} {cc}
			{
				\includegraphics[width = 0.45\textwidth]{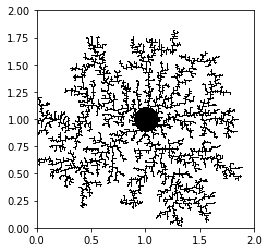}
			} & {
				\includegraphics[width = 0.45\textwidth]{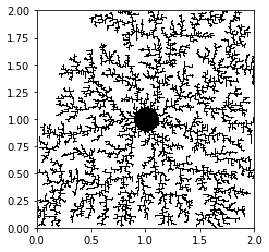}
			}\\
			{
				\includegraphics[width = 0.45\textwidth]{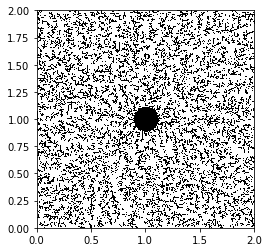}
			} & {
				\includegraphics[width = 0.45\textwidth]{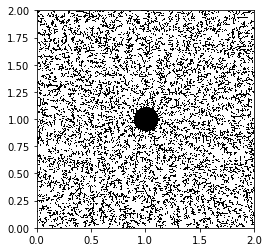}
			}
		\end{tabular}
	    
		\caption{Simulated aggregates in the MDLA variant on a $2\times2$ square. For simplicity, the particles are reflected at the boundary of the square. In all cases, the particles are uniformly distributed outside of the closed ball $\Gamma^N_{0-}$ initially, with approximately $20\%$ of the initially available sites being occupied. Top left: $N=9,900$, $T=0.01$. Top right: $N=9,900$, $T=0.015$. Bottom left: $N=61,875$, $T=0.01$. Bottom right: $N=61,875$, $T=0.015$.}
		\squeezeup		
		\label{fig:2}
	\end{center}
\end{figure}

\medskip

In view of the findings of \cite{ChSw} and \cite{DT} in one space dimension, it is natural to conjecture that, in the $N\to\infty$ scaling limit, the growing aggregate $\Gamma^N$ described in the preceding paragraph tends to the growing solid phase in the 1SSP for the heat equation in any space dimension. To address this conjecture we introduce herein a probabilistic formulation of the 1SSP for the heat equation in multiple space dimensions. In the latter, one seeks a non-decreasing family $(\Gamma_t)_{t\ge0}$ of closed subsets of $\rr^d$ such that $\Gamma_{0-}\subset\Gamma_0$ for a given closed $\Gamma_{0-}\subset\rr^d$, one has $\Gamma_t=\bigcap_{s>t} \Gamma_s$ for all $t\ge0$, and
\begin{align}\label{prob_sol_multi}
& \int_{\RR^d} \left(\chi(t,x)-\bone_{\Gamma_{0-}}(x)\right) \varphi(x)\,\mathrm{d}x 
= \mathbb{E}\big[\varphi\big(X_{0}+B_\tau\big)\,\mathbf{1}_{\{\tau\le t\}}\big],\;\varphi\!\in\! C_c^\infty(\rr^d,\rr),\;t\!\ge\!0 \\
&\,\text{for}\quad\tau:=\inf\{t\ge0:\,X_{0}+B_t\in\Gamma_t\}, \notag
\end{align} 
where $X_{0}\in\rr^d\backslash\Gamma_{0-}$ is a random variable of prescribed distribution (representing the initial temperature distribution in the liquid phase), $B$ is a standard Brownian motion independent thereof, and $\chi$ is a ``phase'' function satisfying
\begin{align}
& \chi(t,x)\,\bone_{\RR^d\setminus \bigcup_{0\le s\leq t}\Delta\Gamma_s}(x)
=\bone_{\Gamma_t\setminus \bigcup_{0\le s\leq t}\Delta\Gamma_s}(x),\quad
\int_{\Delta\Gamma_s} \chi(t,x)\,\mathrm{d}x = \text{Leb}(\Delta\Gamma_s),\;\;s\in[0,t],
\label{eq.SecStefan.weakStefan.chi.0}
\end{align}
where $\Delta\Gamma_s:=\Gamma_s\setminus \bigcup_{0\le r<s} \Gamma_r$ and the union $\bigcup_{0\le s\leq t}\Delta\Gamma_s$ is taken over all $s\in[0,t]$ such that $\text{Leb}(\Delta\Gamma_s)>0$ (the set of such $s$ is at most countable). Note that if the function $t\mapsto \text{Leb}(\Gamma_t)$ is continuous, the condition \eqref{eq.SecStefan.weakStefan.chi.0} amounts to $\chi(t,x)=\bone_{\Gamma_t}(x)$.

\medskip

For $d=1$, $\Gamma_{0-}=(-\infty,0]$ and $\Gamma_t=(-\infty,\Lambda_t]$, $t\ge0$, the formulation \eqref{prob_sol_one}--\eqref{eq.intro.prob_sol_one.jumpSize} implies \eqref{prob_sol_multi}--\eqref{eq.SecStefan.weakStefan.chi.0}, with $\chi(t,\cdot\,)$ defined as $0$ on $(\Lambda_t,\infty)$, as $1$ on $(-\infty,0]$, and as the density of the restriction of the distribution of $X_{0}+B_{\tau}$ to $(0,\Lambda_t]$ on the latter set. Indeed, condition \eqref{prob_sol_multi} can be verified directly from the definition of $\chi$. To check \eqref{eq.SecStefan.weakStefan.chi.0}, on any sub-interval of $\{t\ge0:\,\Lambda_{t-}= \Lambda_t\}$\footnote{It follows from the results of \cite{DNS19} that this set is a union of a countable number of disjoint intervals.} we use the first equation in \eqref{prob_sol_one} to get
\begin{align*}
\Lambda_{t}-\Lambda_{s} = \PP(\tau\in(s,t]) = \PP\left(X_{\tau}\in(\Lambda_{s},\Lambda_t]\right),\;\; s\in[0,t),
\end{align*}
which yields the first equation in \eqref{eq.SecStefan.weakStefan.chi.0}. For the second equation in \eqref{eq.SecStefan.weakStefan.chi.0}, we compute
\begin{align*}
\pp\big(\tau\ge s,\,X_{0}+B_s\in(\Lambda_{s-},\Lambda_{s-}+x]\big) = \int_{\Lambda_{s-}}^{\Lambda_{s-}+x} \chi(t,y)\,\mathrm{d}y,
\;\;s\in[0,t),\;\;x\in[0,\Delta\Lambda_s]
\end{align*}
and apply \eqref{eq.intro.prob_sol_one.jumpSize}.
In Subsection \ref{se:prob_sol} we discuss the probabilistic formulation \eqref{prob_sol_multi}--\eqref{eq.SecStefan.weakStefan.chi.0} in detail and show that it leads to solutions of the 1SSP for the heat equation which are weaker than the classical solutions but stronger than the weak solutions.

\medskip

In the main result (Theorem \ref{thm:main}), we fix a $T\in(0,\infty)$ and encode the paths of the particles in (our version of) MDLA on $[0,T]$ by their empirical measure $\mu^{X,N}$, viewed as a random element of $\mathcal{P}(\mathcal{D}([0,T+1],\rr^d))$. The latter denotes the space of probability measures on $\mathcal{D}([0,T+1],\rr^d)$, the set of c\`{a}dl\`{a}g paths $[0,T+1]\to\rr^d$ carrying the Skorokhod M1 metric, endowed with the topology of weak convergence. Here, for technical reasons, we need to consider the dynamics of the particles and of the aggregate on the extended interval $[-1,T+1]$, assuming that the particle paths are constant on $[-1,0]$ and the aggregate is equal to $\Gamma^N_{0-}$ on $[-1,0)$. The growing aggregate $\Gamma^N$ is captured by the Euclidean distance function $D^N_t(x):=\min_{y\in\Gamma^N_t} |x-y|$, regarded as a random element of $C(\rr^d,\mathcal{D}([-1,T+1],\rr))$, the space of continuous functions $\rr^d\to\mathcal{D}([-1,T+1],\rr)$ with the topology of uniform convergence.
Finally, we introduce the following assumption. 

\begin{assumption}\label{asmp:main}
\begin{enumerate}[(a)]
\item The empirical measures $(\mu^{X,N}_0)_{N\in\nn}$ of the initial particle locations converge in distribution on $\mathcal{P}(\rr^d)$ as $N\to\infty$ to a deterministic limit $\mu^X_0$.
\item The deterministic distance functions $D^N_{0-}(\cdot):=\min_{y\in\Gamma^N_{0-}} |\cdot-\,y|$, $N\in\nn$ converge in $C(\rr^d,\rr)$ to a limit $D_{0-}(\cdot)$ as $N\to\infty$. 
\item The diameters of all connected components of all $\Gamma^N_{0-}$, $N\in\nn$ are bounded below by a positive constant.
\item For the closed set $\Gamma_{0-}:=\{x\in\rr^d:\,D_{0-}(x)=0\}$, the Lebesgue measure of the symmetric difference $\Gamma_{0-}\Delta \Gamma^N_{0-}$ tends to $0$ as $N\to\infty$.
\end{enumerate}
\end{assumption}

Our main result, Theorem \ref{thm:main}, follows from Propositions \ref{prop:existence}, \ref{prop:limDLA.probSol.ineq}, \ref{prop:brownian_limit}, Theorem \ref{thm:char}.

\begin{theorem}\label{thm:main}
\begin{enumerate}[(a)]
\item Any subsequence of $(\mu^{X,N},D^N)_{N\in\nn}$ has a further subsequence converging in distribution on $\mathcal{P}(\mathcal{D}([0,T+1],\rr^d))\times C(\rr^d,\mathcal{D}([-1,T+1],\rr))$ under Assumption \ref{asmp:main}(a)--(b).~Moreover, for almost every realization $(\mu^X(\omega),D(\omega))$ of a weak limit point of $(\mu^{X,N},D^N)_{N\in\nn}$, the non-decreasing family $(\Gamma_t(\omega))_{t\in[0,T]}:=(\{x\in\rr^d:\,D_t(\omega;x)=0\})_{t\in[0,T]}$ of closed subsets of $\rr^d$ satisfies $\Gamma_{0-}\subset\Gamma_0(\omega)$ and $\Gamma_t(\omega)=\bigcap_{t<s\le T} \Gamma_s(\omega)$, $t\in[0,T)$. 
\item For $d=2$ and for almost every realization $(\mu^X(\omega),D(\omega))$ of a limit point,
\begin{enumerate}
\item[(b.1)] under Assumption \ref{asmp:main}(a)--(c), the canonical process under $\mu^X(\omega)$ has the form $X_{0}+B_{t\wedge\tau}$, $t\in[0,T]$, where $\tau:=\inf\{t\ge0\!:X_{0}+B_t\in\Gamma_t(\omega)\}$, $B$~is a Brownian motion, and $X_0$ is an independent random variable with the distribution $\mu^X_0$,
\item[(b.2)] under Assumption \ref{asmp:main}(a)--(d), $\int_{\Gamma_t(\omega)\backslash\Gamma_{0-}} \varphi(x)\,\mathrm{d}x \ge \E\big[\varphi\big(X_{0}+B_\tau\big)\,\mathbf{1}_{\{\tau\le t\}}\big]$ for all $\varphi\in C^\infty_c(\rr^2,[0,\infty))$, $t\in[0,T]$.
\end{enumerate}
\end{enumerate}
\end{theorem}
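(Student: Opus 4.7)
The plan is to prove (a), (b.1), (b.2) in order, extracting tightness and the limit structure first, then identifying the tagged-particle law as a stopped Brownian motion, and finally comparing aggregate volume with absorbed mass.

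\emph{Part (a).} Tightness of $(\mu^{X,N})$ in $\cP(\cD([0,T+1],\rr^d))$ reduces by exchangeability of the particles to tightness of a single tagged trajectory, which performs a continuous-time random walk of jump size $N^{-1/d}$ and total rate $dN^{2/d}$; standard Skorokhod M1 increment estimates give the required diffusive control. For $D^N$, each $D^N_t(\cdot)$ is $1$-Lipschitz in $x$, each $D^N(\cdot,x)$ is non-increasing and dominated by $D^N_{0-}(x)$ (which converges by Assumption \ref{asmp:main}(b)), so an Arzel\`a--Ascoli/Helly selection argument yields tightness in $C(\rr^d,\cD([-1,T+1],\rr))$. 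For a weak limit $D$, the level set $\Gamma_t:=\{D_t=0\}$ is nondecreasing in $t$; the right-continuity $\Gamma_t=\bigcap_{s>t}\Gamma_s$ follows from the c\`adl\`ag monotonicity of $t\mapsto D_t(x)$, and $\Gamma_{0-}\subset\Gamma_0$ follows by passing to the limit in $0\le D^N_t(x)\le D^N_{0-}(x)$.

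\emph{Part (b.1).} I would invoke the tagged-particle invariance principle for the symmetric bond exclusion process in $d=2$ to identify the free evolution of a typical particle under $\mu^X(\omega)$ with a standard Brownian motion; Assumption \ref{asmp:main}(c) ensures a uniformly positive initial distance between typical particles and the aggregate, ruling out degenerate boundary effects, while the initial displacement $X_0$ has law $\mu^X_0$ by Assumption \ref{asmp:main}(a). The substantive task is then to show that the absorption time $\tau^N$, the first time the tagged particle lies within $N^{-1/d}/2$ of $\Gamma^N_t$, converges in distribution to $\tau=\inf\{t\ge 0:X_0+B_t\in\Gamma_t\}$. This rests on the \emph{stability of the crossing property} of planar Brownian motion advertised in the abstract: whenever the limiting path touches $\Gamma_t$ it enters, and the same remains true along the random approximations $\Gamma^N_t\to\Gamma_t$.

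\emph{Part (b.2).} Each absorbed particle contributes exactly one disjoint hypercube of volume $1/N$ to $\Gamma^N_t\setminus\Gamma^N_{0-}$, so for $\varphi\in C_c^\infty(\rr^2,[0,\infty))$,
\begin{align*}
\int_{\Gamma^N_t\setminus\Gamma^N_{0-}}\varphi(x)\,\mathrm{d}x
=\frac{1}{N}\sum_{i:\tau^{i,N}\le t}\varphi(y^{i,N})+O\!\big(\|\nabla\varphi\|_\infty N^{-1/2}\big),
\end{align*}
where $y^{i,N}$ is the absorption site of particle $i$. By part (b.1) and Assumption \ref{asmp:main}(a), the right-hand side converges to $\E[\varphi(X_0+B_\tau)\mathbf{1}_{\{\tau\le t\}}]$. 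Meanwhile, uniform convergence $D^N_t\to D_t$ implies the semicontinuity $\limsup_N\mathbf{1}_{\Gamma^N_t}(x)\le\mathbf{1}_{\Gamma_t}(x)$ pointwise (points with $D_t(x)>0$ are excluded from $\Gamma^N_t$ eventually), hence $\limsup_N\int_{\Gamma^N_t}\varphi\le\int_{\Gamma_t}\varphi$; combining with $\int_{\Gamma^N_{0-}}\varphi\to\int_{\Gamma_{0-}}\varphi$ from Assumption \ref{asmp:main}(d) yields the claimed inequality. Equality can fail precisely because $\Gamma_t$ may be strictly larger than any pointwise/Hausdorff limit of $\Gamma^N_t$.

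\emph{Main obstacle.} The principal difficulty is part (b.1): proving continuity of the hitting-time functional $(\omega,(\Gamma_t))\mapsto\inf\{t:\omega(t)\in\Gamma_t\}$ along random approximations. In the plane, $\partial\Gamma_t$ may develop thin dendrites, cusps, and disconnected components, all of which can produce spurious near-touchings without genuine crossings. The stability of the planar crossing property, specific to $d=2$, is the technical device that excludes these pathologies and is the reason (b) is restricted to two ambient dimensions, whereas the tightness of (a) and the volume-counting of (b.2) extend more routinely to general $d$.
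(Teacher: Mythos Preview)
Your overall architecture matches the paper's: Proposition~\ref{prop:existence} for part (a), Theorem~\ref{thm:char} (built on Proposition~\ref{prop:stabCrossing} and Corollary~\ref{cor:stabCrossing}) for (b.1), and Proposition~\ref{prop:limDLA.probSol.ineq} for (b.2). The tightness argument for $D^N$ and the volume-counting inequality in (b.2) are essentially as in the paper, modulo the technical point that $D^N_t\to D_t$ only holds at continuity times of $D$, which must be handled by approximation from the right.

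There is, however, a genuine gap in your account of (b.1). You misread Assumption~\ref{asmp:main}(c): it does \emph{not} say typical particles start far from the aggregate; it says every connected component of $\Gamma^N_{0-}$ has diameter bounded below by a fixed $\delta_1>0$. This is the key geometric input, and your description of the crossing property (``whenever the limiting path touches $\Gamma_t$ it enters'') misses the actual mechanism. In the plane the aggregate may have empty interior, so there is nothing to ``enter''. What the paper proves (Proposition~\ref{prop:stabCrossing}) is that a planar Brownian motion started at any point $z$ almost surely \emph{encloses} a neighborhood $V$ of $z$ within any short time, and that this enclosing is stable: every continuous path uniformly close to the Brownian path encloses a (possibly different) neighborhood $V'$ of $z$, with $\partial V'$ lying on that path. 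The bridge to hitting times (Corollary~\ref{cor:stabCrossing}) then runs as follows: at the limiting hitting time $\tau$ one has $B_\tau\in\Gamma_\tau$; the nearby path $b'$ encloses a small neighborhood $V'$ of $B_\tau$; since $\Gamma^N_{\tau+\delta_0}$ has a point near $B_\tau$ (hence in $V'$) and, by Assumption~(c), that point is connected inside $\Gamma^N_{\tau+\delta_0}$ to a point at distance $\ge\delta_1$, the connecting curve must cross $\partial V'\subset b'_{[\tau,\tau+\delta]}$. This is where Assumption~(c) is used, and without it the argument collapses: a sequence of aggregates consisting of isolated points could converge to $\Gamma$ in the distance sense while the approximating paths never touch them. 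Your sketch correctly locates the difficulty at the hitting-time functional but does not supply the enclosing-plus-connectivity mechanism that resolves it.
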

	
\begin{rmk}\label{rem:mainRes.explain}
Theorem \ref{thm:main}(b) says that, for $d=2$ and under Assumption \ref{asmp:main}(a)--(d), the growing aggregate $\Gamma(\omega)$ associated with almost every realization  $(\mu^X(\omega),D(\omega))$ satisfies the probabilistic formulation of the 1SSP for the heat equation, where $\chi(t,x)=\bone_{\Gamma_t}(x)$, except that \eqref{prob_sol_multi} holds with an inequality instead of an equality. In Subsection~\ref{subse:DLA.vs.Stefan} we give an example which illustrates that, for a general limit point of MDLA, one cannot expect to find a $\chi$ satisfying \eqref{eq.SecStefan.weakStefan.chi.0} and such that \eqref{prob_sol_multi} holds with an equality, even for densities of $X_{0}$ that are uniformly close to $0$. In such cases, the density of absorbed particles in the aggregate is too low, which is due to the multitude of microscopic holes formed when the $N^{-\frac{1}{2}}\times N^{-\frac{1}{2}}$ squares are attached to the aggregate in a ``disorderly" fashion. The density of the resulting ``solid'' crystal is lower and its enthalpy is higher than normal. The resulting object is sometimes referred to as a ``mushy region'' (see, e.g., \cite{Vis}). 
\end{rmk}

The rest of the paper is structured as follows. In Section \ref{se:system}, we define the general growth processes studied herein. Sections \ref{se:LargePop.lim}, \ref{se:Prop}, and \ref{se:Stefan} then provide the proofs of Theorem \ref{thm:main}(a), (b.1), and (b.2), respectively, for these processes. One of the main technical contributions of the paper is Proposition \ref{prop:stabCrossing}, which shows the stability of a certain ``crossing property'' of planar Brownian motion, interesting on its own and explaining why Theorem \ref{thm:main}(b) is restricted to $d=2$. Another result, interesting in its own right, is stated in Propositions \ref{prop:classic.is.prob} and \ref{prop:prob.is.weak}, which show that a classical solution to the single-phase supercooled Stefan problem must be a probabilistic solution and that a probabilistic solution must be a weak solution (in the PDE sense). The example mentioned in Remark \ref{rem:mainRes.explain} is described at the end of Subsection \ref{subse:DLA.vs.Stefan}. We close with an appendix, which contains the verification that the version of MDLA described in this introduction falls into the framework of Section \ref{se:system}. 

\section{Preliminaries on a class of external MDLA processes}
\label{se:system}


In the definition of our class of external MDLA processes we use the following notations. We fix a $d\in\nn$, take for each $N\in\nn$ the hypercubic lattice $\bZ_N:=\mathbb{Z}^d/N^{\frac{1}{d}}$, and write $\mathcal C_N$ for the set of translates of the hypercube $[-N^{-\frac{1}{d}}/2,N^{-\frac{1}{d}}/2]^d$ by elements of $\bZ_N$. Clearly, the union of all elements of $\mathcal C_N$ is $\RR^d$, and the intersection of any two elements of $\mathcal C_N$ is a Lebesgue null set. Consider now an initial aggregate $\Gamma^N_{0-}$ (``seed'') which is a union of elements of $\mathcal C_N$. Without loss of generality we assume that the Lebesgue measure of $\RR^d\setminus\Gamma^N_{0-}$ is at least $1$. 
Finally, for a fixed $T\in(0,\infty)$, we let
$$ 
\big(\widetilde{X}^{1,N}_t,\,\widetilde{X}^{2,N}_t,\,\ldots,\,\widetilde{X}^{N,N}_t\big)_{t\in[0,T+1]}
$$
be a $(\bZ_N)^N$-valued c\`{a}dl\`{a}g process. We refer to it as the underlying particle system.

\begin{definition}\label{def:extDLA}
For a given underlying particle system and seed $\Gamma^N_{0-}\subset\RR^d$ which is a union of hypercubes from $\mathcal C_N$, the associated external MDLA process consists of the $(\bZ_N)^N$-valued stochastic (``particles'') process
$$ 
\big(X^{1,N}_t,\,X^{2,N}_t,\,\ldots,\,X^{N,N}_t\big)_{t\in[0,T]}
$$
and a random non-decreasing family $\{\Gamma^N_t\}_{t\in[-1,T]}$ of closed subsets of $\RR^d$ (``aggregate'') such that
\begin{itemize}
\item $X^{i,N}_t = \widetilde{X}^{i,N}_{t\wedge\tau^{i,N}}$, $t\in[0,T]$, $i=1,\,2,\,\ldots,\,N$, where
\begin{equation}\label{eq.tau.i.N.def}
\tau^{i,N} := \inf\{t\geq0:\,\mathrm{dist}(\widetilde{X}^{i,N}_t,\Gamma^N_{t-})\leq N^{-\frac{1}{d}}/2\},\; i=1,\,2,\,\ldots,\,N\;\;\;\text{and}\;\;\;\Gamma^N_{t-}:=\bigcup_{0\le s<t} \Gamma^N_s,
\end{equation}
\item $\Gamma^N_t=\Gamma^N_{0-}$, $t\in[-1,0)$,
\item at every hitting time $\tau^{i,N}$, the set $\Gamma^N_{\tau^{i,N}}$ is given by the union of $\Gamma^N_{\tau^{i,N}-}$ with all sets $C$, such that $C$ is a connected union of cubes in $\cC_N$ whose centers belong to $\{X^{1,N}_{\tau^{i,N}},\,X^{2,N}_{\tau^{i,N}},\,\ldots,\,X^{N,N}_{\tau^{i,N}}\}$ and $C\cap\Gamma^N_{\tau^{i,N}-}\neq \varnothing$, and the set $\Gamma^N_\cdot$ remains unchanged between these hitting times.
\end{itemize}
\end{definition}

\begin{rmk}
One could let $\tau^{i,N} := \inf\{t\geq0:\,\mathrm{dist}(\widetilde{X}^{i,N}_t,\Gamma^N_{t-})=0\}$ in Definition \ref{def:extDLA} and, at any such hitting time, grow the aggregate by only adding the hypercube from which the hitting particle entered it. This would not affect our results, except that 
Example \ref{counterexample} would be more difficult to analyze.
\end{rmk}

\begin{rmk}
One could also study an internal MDLA process in which particles are absorbed upon exiting the aggregate: 
$
\tau^{i,N} := \inf\{t\geq0:\,\mathrm{dist}(\widetilde{X}^{i,N}_t,\Gamma^N_{t-})\geq N^{-\frac{1}{d}}/2\}.
$
The main results of Sections \ref{se:LargePop.lim} and \ref{se:Prop} hold for internal MDLA processes (as well as for other growth processes, see Remarks \ref{rmk:ext.GenGrowthModels} and \ref{rmk:ext.GenGrowthModels.2}). However, the results of Section \ref{se:Stefan} would change. The limit points of internal MDLA processes are associated with the regular, rather than supercooled, version of the Stefan problem. The former is much better understood than the latter and, naturally, much more is known about scaling limits of internal MDLA processes, so we focus on external MDLA processes herein. 
\end{rmk}

It is clear that for any given underlying particle system and seed, Definition \ref{def:extDLA} uniquely determines the aggregate $\Gamma^N$ on $[-1,T]$. Informally speaking, whenever a particle finds itself in a hypercube adjacent to the aggregate, the hypercube is instantaneously added to the aggregate. The choice of hypercubes as the ``building blocks" of the aggregate is made for convenience -- we expect our results to apply for any other natural family of partitions of $\RR^d$, provided the diameters of the partition elements decay at the rate of $N^{-\frac{1}{d}}$. It is clear as well that the sets $\{\Gamma^N_t\}_{t\in[-1,T]}$ increase by jumps only, with a total of at most $N$ jumps, and that the right-continuity $\Gamma^N_t = \bigcap_{t<s\le T} \Gamma^N_s$, $t\in[-1,T)$ holds. The domain of the time index of $\Gamma^N$ is extended from $[0,T]$ to $[-1,T]$ in order to yield a precise meaning to $\Gamma^N_{0-}$. Also for technical reasons, we consider the underlying particle system on the extended time interval $[0,T+1]$.  

\medskip

While Definition \ref{def:extDLA} amounts to a very general class of external MDLA processes, our results require further assumptions. To state the first assumption, we introduce the empirical measure of the particle paths:
\begin{equation}
\mu^{\widetilde{X},N}:= \frac{1}{N}\sum_{i=1}^N \delta_{\widetilde{X}^{i,N}},
\end{equation}
which is a random probability measure on the Skorokhod space $\mathcal{D}([0,T+1],\rr^d)$ of c\`{a}dl\`{a}g functions $[0,T+1]\to\RR^d$. As stated in Section \ref{se:intro}, we equip $\mathcal{D}([0,T+1],\rr^d)$ with the Skorokhod M1 metric, making it a complete separable metric space (see, e.g., \cite{Whitt}, \cite{JacodShiryaev}), and denote by $\mathcal{P}(\mathcal{D}([0,T+1],\rr^d))$ the space of probability measures on $\mathcal{D}([0,T+1],\rr^d)$ endowed with the topology of weak convergence. Thus, $\mu^{\widetilde{X},N}$ is a random element of $\mathcal{P}(\mathcal{D}([0,T+1],\rr^d))$. Our first assumption says that the input of the external MDLA process, encoded by $\mu^{\widetilde{X},N}$ and $\Gamma^N_{0-}$, converges as $N\rightarrow\infty$. This is the minimal assumption (up to the distinction between the full sequence and its subsequences) needed for the existence of limit points of external MDLA (Section \ref{se:LargePop.lim}).

\begin{assumption}\label{ass:0}
The seed $\Gamma^N_{0-}$ tends to a closed set $\Gamma_{0-}\subset\RR^d$ in the distance sense\footnote{Such a convergence is also known as the Painlev\'e-Kuratowski convergence, see \cite[Corollary 4.7]{Rocka}.}{\em :}
\begin{equation}\label{aggIC}
\lim_{N\rightarrow\infty}\mathrm{dist}(x,\Gamma^N_{0-}) = \mathrm{dist}(x,\Gamma_{0-}),\;\; x\in\RR^d.
\end{equation}
The empirical measure $\mu^{\wt X,N}$ converges in distribution as a random element of $\mathcal{P}(\mathcal{D}([0,T+1],\rr^d))$.
\end{assumption}

An example of a particle system satisfying Assumption \ref{ass:0} is the system of i.i.d.~continuous time simple symmetric random walks with jump rates $N^{\frac{2}{d}}$. More generally, if the underlying particle system $\{\wt X^{i,N}\}_{i=1}^N$ has a symmetric distribution and satisfies certain regularity conditions, then the propagation of chaos holds, i.e., the convergence of the empirical measure of $\{\wt X^{i,N}_0\}_{i=1}^N$ to a deterministic limit implies the convergence of $\mu^{\wt X,N}$.
We refer to \cite{Sznitman} for a detailed study of the chaoticity property.

\medskip

The next assumption ensures that the underlying particles make only nearest neighbor jumps and that their empirical measure converges to a Wiener measure.~It is crucial for our description of the limiting particles as absorbed Brownian motions (Section \ref{se:Prop}).

\begin{assumption}\label{ass:1}
For any large enough $N\in\nn$, with probability one, we have
\begin{equation}\label{eq.smallJumpSize.def}
\big|\wt X^{i,N}_t-\lim_{s\uparrow t} \wt X^{i,N}_s\big|
\in\{0,N^{-\frac{1}{d}}\},\;\;t\in[0,T+1],\;\; i=1,\,2,\,\ldots,\,N,
\end{equation}
and the diameters of all connected components of all $\Gamma^N_{0-}$, $N\in\nn$ are bounded below by a positive constant.
In addition, Assumption \ref{ass:0} holds, and $\mu^{\wt X}:=\lim_{N\to\infty} \mu^{\wt X,N}$ almost surely equals to the distribution of the process $(\xi+B_t)_{t\in[0,T+1]}$, where $\xi$ is an $\rr^d$-valued random variable and $B$  is a standard Brownian motion independent of $\xi$. 
\end{assumption}

\begin{rmk}\label{rmk:relax.Ass1}
Assumption \ref{ass:1} can, in fact, be relaxed by replacing $B$ with $(B_{L_t})_{t\in[0,T+1]}$, for any strictly increasing stochastic process $L$.
\end{rmk}

An example of a particle system fulfilling Assumption \ref{ass:1} is the system of i.i.d.~continuous time simple symmetric random walks with jump rates $N^{\frac{2}{d}}$.

\begin{rmk}
As the volume of the aggregate grows by $N^{-1}$ every time a particle is absorbed, it makes sense to refer to $N^{-1}$ as the size of a particle. Thus, the scaling of an MDLA process is determined by the asymptotic behavior of: (i) the size of the particles, (ii) the size of their jumps (``space scaling''), and (iii) their speed (``time scaling''). We scale these quantities as $N^{-1}$, $N^{-\frac{1}{d}}$, and $N^{\frac{2}{d}}$, respectively. This choice is motivated by our goal to study the connection between the scaling limits of MDLA processes and the Stefan problem (Section \ref{se:Stefan}). The latter is a parabolic problem, which explains the parabolic space-time scaling, and the size of the particles is scaled to ensure that their total size is of order $1$. However, other scaling choices are possible, e.g., the choice of $N^{-2}$, $N^{-\frac{1}{d}}$, and $N^{1+\frac{2}{d}}$, with the additional assumption that the particles are generated from a remote source at a rate of order $N$, is more natural for connecting the scaling limits to the (elliptic) Hele-Shaw problem, which has been oftentimes alluded to (implicitly and heuristically) in the physics literature.  
\end{rmk}

The following assumption is used in Section \ref{se:Stefan} to establish a connection between the limit points of external MDLA processes and the supercooled Stefan problem.

\begin{assumption}\label{ass:2}
As $N\rightarrow\infty$, the Lebesgue measure of the symmetric difference $\Gamma_{0-}\Delta \Gamma^N_{0-}$ tends to $0$.
In addition, for any large enough $N\in\nn$, with probability one,
\begin{eqnarray}
&& \wt X^{i,N}_0\notin\Gamma^N_{0-},\;\;i=1,\,2,\,\ldots,\,N, \\
&& \wt X^{i,N}_t\neq \wt X^{j,N}_t,\;\;t\in[0,T+1],\;\; i\neq j. 
\end{eqnarray}
\end{assumption}

Assumption \ref{ass:2} guarantees that the particles start outside of the seed and that no two particles can occupy the same site (``exclusion rule''). Together with \eqref{eq.smallJumpSize.def}, it implies that, at any given time, the number of particles absorbed by the aggregate coincides with the number of hypercubes from $\mathcal C_N$ added to the aggregate at that time. The latter can be interpreted as an energy (or, density) conservation property and is discussed in more detail in connection with the Stefan problem, in Subsection \ref{subse:DLA.vs.Stefan}. Note that systems of i.i.d.~continuous time simple symmetric random walks violate Assumption \ref{ass:2}. Nonetheless, there are particle systems that satisfy Assumptions \ref{ass:1} and \ref{ass:2}. An example is provided by the ``bond exclusion process'', which is described briefly in Section \ref{se:intro}.~Below we provide its detailed description (see also \cite[Chapter I, Section 3), Example d)]{Sznitman}).

\medskip

Consider $N$ particles that, at time $0$, are placed at distinct locations in $\bZ_N\cap(\rr^d\backslash\Gamma^N_{0-})$ according to an initial distribution that is symmetric across particles. A bond of $\bZ_N$ is a pair of sites $x,y\in\bZ_N$ such that all coordinates of $x$ and $y$ are equal, except for one which differs by exactly $N^{-\frac{1}{d}}$ between the two sites. To each bond we associate an independent Poisson process of rate $N^{\frac{2}{d}}/2$. Every particle $\wt X^{i,N}$, located at a site $x$, remains there until the first of the $2d$ Poisson processes associated with the $2d$ bonds emanating from $x$ jumps, at which time the particle jumps across the corresponding bond. The particle at the other end of the bond, if present, is forced to also traverse the bond, and, therefore, no two particles can occupy the same site. Thus, this underlying particle system satisfies Assumption \ref{ass:2}, provided the seed $\Gamma_{0-}$ is approximated with sufficient precision by $\Gamma^N_{0-}$. It is shown in Proposition \ref{prop:brownian_limit} that the bond exclusion process fulfills Assumption~\ref{ass:1} as well, provided that $\mu^{\wt X,N}_0:=\frac{1}{N}\sum_{i=1}^N \delta_{\wt X^{i,N}_0}$ tends to a deterministic limit. 

\medskip

We note that the physically relevant quantities in an external MDLA process are given by the empirical measure of the particle paths
\begin{equation}
\mu^{X,N}:= \frac{1}{N}\sum_{i=1}^N \delta_{X^{i,N}}
\end{equation}
and by the aggregate $\Gamma^N$. Herein, we establish the existence of limit points (in law) of the pair $(\mu^{X,N},\Gamma^N)$ as $N\rightarrow\infty$, and describe several important properties of those. In particular, we analyze the relationship between the particles and the aggregate in a limiting process, and their connection to the supercooled Stefan problem.

\begin{rmk}
One can, of course, encode an external MDLA process by $(\mu^{\wt X,N},\Gamma^N)$, as the absorption times $\{\tau^{i,N}\}^N_{i=1}$ and the empirical measure of the particle paths $\mu^{X,N}$ can then be reconstructed. However, a priori, the functional dependence of $\mu^{X,N}$ on $(\mu^{\wt X,N},\Gamma^N)$ may be lost when we pass to the $N\to\infty$ limit (in law). In fact, one of the main conclusions of our work (Theorem \ref{thm:char}) is that, under Assumption~\ref{ass:1} and for $d\in\{1,2\}$, this functional dependence is not lost in the limit, so that almost every realization of $\mu^{X}:=\lim_{N\to\infty} \mu^{X,N}$ is the distribution of the canonical process under $\mu^{\wt X}:=\lim_{N\to\infty} \mu^{\wt X,N}$ stopped upon hitting the aggregate. The latter follows from the convergence of the absorption times which, in turn, relies on Assumption \ref{ass:1} and on the stability of a ``crossing property'' of Brownian motion, known before for $d=1$ and shown in Subsection \ref{subse:crossing} of this paper for $d=2$.
\end{rmk}

\section{Existence of limit points}
\label{se:LargePop.lim}

In this section, we prove that, under Assumption \ref{ass:0}, any sequence of external MDLA processes has a limit point in distribution. Before proceeding to the result we discuss the topology used to define the convergence in distribution of MDLA processes.
First, we recall that, for $a_1<a_2\in\RR$ and $d\in\!\nn$, the space $\cD([a_1,a_2],\rr^d)$ of right-continuous functions $[a_1,a_2]\to\rr^d$ with left limits is equipped with the Skorokhod M1 metric. For technical reasons, we extend the particle paths to $(T,T+1]$ constantly and make their empirical measure $\mu^{X,N}$ a random element of $\cP(\cD([0,T+1],\rr^d))$, the space of probability measures on $\cD([0,T+1],\rr^d)$ with the topology of weak convergence.

\medskip

Next, again for technical reasons, we allow the time index of the aggregate $\Gamma^N$ to vary in $[-1,T+1]$ by setting
\begin{equation}
\Gamma^N_t = \Gamma^N_T,\;\; t\in(T,T+1].
\end{equation}
We also notice that, at any time $t\in[-1,T+1]$, the aggregate $\Gamma^N_t$ is uniquely characterized by the associated distance function 
\begin{equation}
D^N_t(x):=\dist(x,\Gamma_t^N)=\min_{y\in\Gamma^N_t} |x-y|,\;\; x\in\RR^d.
\end{equation}
The non-decreasing nature of the family $\{\Gamma^N_t\}_{t\in[-1,T+1]}$ and its right-continuity imply that the functions $t\mapsto D^N_t(x)$, $x\in\RR^d$ are elements of $\cD([-1,T+1],\RR)$. On the other hand, the triangle inequality yields the $1$-Lipschitz property of $x\mapsto D^N_t(x)$, $t\in[-1,T+1]$. Thus, the random field 
\begin{equation}
[-1,T+1]\times\RR^d\to\RR,\;\;(t,x)\mapsto D^N_t(x)
\end{equation}
can be viewed as a random element of $C(\RR^d,\cD([-1,T+1],\RR))$. We equip the latter space with the topology of uniform convergence on compacts induced by the M1 metric on $\cD([-1,T+1],\RR)$. This, in turn, induces a topology on the space of paths of $\Gamma^N$. With this choice of the topology, the limit points of $(\Gamma^N)_{N\in\nn}$ correspond to the limit points of $(D^N)_{N\in\nn}$. As it is more convenient to work with distance functions, we analyze the limit points (in distribution) of external MDLA processes directly as the limit points (in distribution) of $(\mu^{X,N},D^N)_{N\in\nn}$. Hereby, each $(\mu^{X,N},D^N)$ is regarded as a random element of $\cP(\cD([0,T+1],\RR^d))\times C(\RR^d,\cD([-1,T+1],\RR))$, and we put the product topology thereon.

\begin{rmk}
The criterion for M1 convergence of monotone functions (see, e.g., \cite[Theorem 4.2]{DIRT2}) and the $1$-Lipschitz property of every $x\mapsto D^N_t(x)$ imply the following. For any subsequence of $(D^N)_{N\in\nn}$ converging almost surely in $C(\RR^d,\cD([-1,T+1],\RR))$, there almost surely exists a set of times $\mathbb T\subset[-1,T+1]$ of full Lebesgue measure such that $x\mapsto D^N_t(x)$ converges pointwise for all $t\in\mathbb T$. This shows the convergence of the corresponding subsequence of $(\Gamma^N_t)_{N\in\nn}$ in the distance sense (or, equivalently, in the Painlev\'e-Kuratowski topology) for all $t\in\mathbb T$ almost surely. 
\end{rmk}

Proposition \ref{prop:existence} is the main result of this section. It states that every subsequence of $(\mu^{X,N},D^N)_{N\in\nn}$ admits a further subsequence converging in distribution.

\begin{proposition}\label{prop:existence}
Under Assumption \ref{ass:0}, any increasing infinite sequence in $\NN$ has a subsequence along which $(\mu^{X,N},D^N)$ has a limit $(\mu^X,D)$ in distribution. Moreover, if $\lim_{N\to\infty} \mu^{\wt X,N}$ assigns probability one to $C([0,T+1],\RR^d)$ almost surely, then $\mu^X$ assigns probability one to $C([0,T+1],\RR^d)$ almost surely.
\end{proposition}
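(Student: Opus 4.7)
The plan is to establish the tightness of the two components $(\mu^{X,N})_N$ and $(D^N)_N$ separately, from which joint tightness and a convergent subsequence follow by Prohorov's theorem, and then to handle the continuity-of-paths assertion via a joint-measure and continuous-mapping argument.

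For tightness of $(D^N)$ in $C(\rr^d,\cD([-1,T+1],\rr))$, the argument rests on two structural features. First, for each fixed $x\in\rr^d$, the map $t\mapsto D^N_t(x)$ is non-increasing (since $\Gamma^N_t$ is non-decreasing) and uniformly bounded by $D^N_{-1}(x)=D^N_{0-}(x)$, whose limit is $D_{0-}(x)$ by \eqref{aggIC}; the criterion of \cite[Theorem 4.2]{DIRT2} shows that uniformly bounded monotone c\`adl\`ag paths form an M1-relatively compact subset of $\cD([-1,T+1],\rr)$. Second, the triangle inequality gives $|D^N_t(x)-D^N_t(y)|\le|x-y|$ uniformly in $N$ and $t$, hence equicontinuity in $x$. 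A Polish-space Arzel\`a-Ascoli argument then yields almost sure relative compactness of $(D^N)$, and a fortiori tightness. For tightness of $(\mu^{X,N})$ in $\cP(\cD([0,T+1],\rr^d))$, I would use that $X^{i,N}=\phi(\widetilde X^{i,N},\tau^{i,N})$, where $\phi(f,s)(t):=f(t\wedge s)$, together with the tightness of $(\mu^{\widetilde X,N})$ from Assumption~\ref{ass:0}. The key technical observation is that if $K\subset\cD([0,T+1],\rr^d)$ is M1-relatively compact, then so is $K':=\{f_{\cdot\wedge s}:f\in K,\,s\in[0,T+1]\}$, because the M1 oscillation of $f_{\cdot\wedge s}$ is controlled by that of $f$ (stopping only truncates the graph and adds a horizontal tail). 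Since $\widetilde X^{i,N}\in K$ implies $X^{i,N}\in K'$, one has $\mu^{X,N}(\overline{K'})\ge\mu^{\widetilde X,N}(K)$, which transfers tightness. Joint tightness of $(\mu^{X,N},D^N)$ is then automatic, and a further subsequence converges in distribution to a pair $(\mu^X,D)$.

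For the continuity-of-paths assertion, I would introduce the joint empirical measure $\rho^N:=\frac{1}{N}\sum_{i=1}^N \delta_{(\widetilde X^{i,N},\tau^{i,N})}$ on $\cD([0,T+1],\rr^d)\times[0,T+1]$, which is tight because its first marginal is tight and its second coordinate lives in the compact $[0,T+1]$. Along a common subsequence, $\rho^N\to\rho$ in distribution, and the first marginal of $\rho$ equals $\mu^{\widetilde X}$; by hypothesis the latter concentrates on $C([0,T+1],\rr^d)$ almost surely, so $\rho$ concentrates on $C([0,T+1],\rr^d)\times[0,T+1]$ almost surely. The stopping map $\phi$ is continuous at every $(f,s)\in C([0,T+1],\rr^d)\times[0,T+1]$: M1 convergence to a continuous limit coincides with uniform convergence, and then $f_n(\cdot\wedge s_n)\to f(\cdot\wedge s)$ uniformly by uniform continuity of $f$. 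Since $\mu^{X,N}=\rho^N\circ\phi^{-1}$, the continuous mapping theorem for random probability measures identifies $\mu^X=\rho\circ\phi^{-1}$, which concentrates on $\{f_{\cdot\wedge s}:f\in C([0,T+1],\rr^d),\,s\in[0,T+1]\}\subset C([0,T+1],\rr^d)$.

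The main obstacle I anticipate is the technical observation that stopping preserves M1 compactness: verifying this requires a careful analysis of the M1 oscillation modulus and of the parametric representation of the graph, to ensure that truncation followed by a horizontal extension neither inflates the oscillation nor destroys the uniform control along the family. The remainder of the argument is a standard combination of Prohorov's theorem, Arzel\`a-Ascoli, and the continuous mapping principle for random probability measures.
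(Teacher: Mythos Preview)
Your proposal is correct and, for the two tightness claims, matches the paper's proof almost verbatim. The paper also establishes tightness of $(D^N)$ via Arzel\`a--Ascoli, using the $1$-Lipschitz property of $x\mapsto D^N_t(x)$ for equicontinuity and Whitt's M1 precompactness criterion (monotone, uniformly bounded, constant near the endpoints) for pointwise precompactness. For $(\mu^{X,N})$ the paper likewise reduces to the observation that the uniform boundedness and M1 oscillation estimates enjoyed by the paths $\widetilde{X}^{i,N}$ on a compact $K'_1\subset\cD([0,T+1],\rr^d)$ are inherited by the stopped paths $X^{i,N}$; this is exactly your ``stopping preserves M1 relative compactness'' claim, and your anticipated obstacle (controlling the M1 oscillation of a truncated graph) is real but mild---the distance from $f(t\wedge s)$ to the segment $[f(t_1\wedge s),f(t_2\wedge s)]$ is either zero (if $t\ge s$) or already bounded by the oscillation of $f$ on a window of the same length.

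For the continuity-of-paths assertion your route differs from the paper's. The paper passes to an almost sure realization of the pair $(\mu^{\widetilde{X},N},\mu^{X,N})$ via Skorokhod and then argues pathwise: since $\mu^{\widetilde{X}}(\omega)$ is supported on $C([0,T+1],\rr^d)$, the sequence $(\mu^{\widetilde{X},N}(\omega))_N$ is $C$-tight in the sense of \cite[Chapter~VI, Proposition~3.26]{JacodShiryaev}, and $C$-tightness transfers to $(\mu^{X,N}(\omega))_N$ because stopping does not increase the $J_1$ modulus of continuity. You instead introduce the joint empirical measure $\rho^N=\frac{1}{N}\sum_i\delta_{(\widetilde{X}^{i,N},\tau^{i,N})}$ and push forward through the stopping map $\phi$. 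This is valid: the map $\nu\mapsto\nu\circ\phi^{-1}$ is weakly continuous at every $\nu$ supported on $C([0,T+1],\rr^d)\times[0,T+1]$ (your verification that $\phi$ is continuous there is correct), and a second-level continuous mapping argument identifies the law of $\mu^X$ with that of $\rho\circ\phi^{-1}$. Your approach isolates the role of the stopping structure and could be reused if one later needs joint control of paths and hitting times; the paper's approach is more economical, needing no auxiliary random measure and only a one-line oscillation comparison.
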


\begin{proof}
Due to Prokhorov's Theorem, it suffices to prove that the sequence of the respective laws of $(\mu^{X,N},D^N)_{N\in\nn}$ is tight, i.e., that for any $\varepsilon>0$, there exist compact sets $K_1\subset \mathcal P(\cD([0,T+1],\rr^d))$ and $K_2\subset C(\RR^d,\cD([-1,T+1],\rr))$ with the property
\begin{equation}\label{eq.existPf.eq2}
	\PP(\mu^{X,N}\in K_1,\,D^N\in K_2)\geq 1-\varepsilon,\;\;N\in\nn.
\end{equation}
To construct $K_1$, we first infer from Assumption \ref{ass:0} that the respective laws of $(\mu^{\wt X,N})_{N\in\nn}$ form a tight sequence. Consequently, for any $\varepsilon>0$, there exists a compact set $\wt K_1\subset \mathcal P(\cD([0,T+1],\rr^d))$ such that
$$
\PP(\mu^{\wt X,N}\in \wt K_1)\geq 1-\varepsilon/2,\;\;N\in\nn.
$$
Let $\Omega^N_0:= \{\omega\in\Omega:\,\mu^{\wt X,N}(\omega)\in \wt K_1\}$. Then, Prokhorov's Theorem implies that, for any $\varepsilon'>0$, there exists a compact set $K'_1\subset \mathcal \cD([0,T+1],\rr^d)$ for which
$$
\mu^{\wt X,N}(\omega)(K'_1)\geq 1-\varepsilon',\;\;\omega\in\Omega^N_0,\;\; N\in\nn.
$$
Due to the characterization of compactness in $\cD([0,T+1],\rr^d)$ of \cite[Theorem 12.12.2]{Whitt}, the latter gives a uniform boundedness and a uniform oscillation estimate on at least $N(1-\varepsilon')$ paths of the underlying particles $\{\wt X^{i,N}(\omega)\}_{i=1}^N$, for all $\omega\in\Omega^N_0$ and all $N\in\nn$. Clearly, the corresponding stopped particle paths $X^{i,N}(\omega)$ satisfy the same uniform boundedness and uniform oscillation estimates. Applying the reverse direction of Prokhorov's Theorem we conclude that there is a compact $K_1\subset \mathcal P(\cD([0,T+1],\rr^d))$ such that $\mu^{X,N}(\omega)\in K_1$ for all $\omega\in\Omega_0^N$ and all $N\in\nn$. This finishes the construction of $K_1$, as we recall that $\PP(\Omega^N_0)\geq 1-\varepsilon/2$, $N\in\nn$. 

\medskip

It remains to show that, for any $\varepsilon>0$, we can select a compact set $K_2\subset C(\RR^d,\cD([-1,T+1],\rr))$ so that $\pp(D^N\in K_2)\ge 1-\varepsilon/2$, $N\in\nn$. Since the space $C(\RR^d,\cD([-1,T+1],\rr))$ is equipped with the topology of uniform convergence on compacts, it suffices to verify that, for any compact set $K\subset\RR^d$ and any $\widehat{\varepsilon}>0$, we can find a compact set $\widehat{K}_2\subset C(K,\cD([-1,T+1],\rr))$ such that $\pp(D^N|_K\in\widehat{K}_2)\ge 1-\widehat{\varepsilon}$, $N\in\nn$. We claim that the closure of $\{D^N(\omega)|_K:\,\omega\in\Omega,\,N\in\nn\}$ in $C(K,\cD([-1,T+1],\rr))$ can serve as $\widehat{K}_2$ for any $\widehat{\varepsilon}>0$. By the Arzel\`{a}-Ascoli Theorem for general topological spaces (see, e.g., \cite[Chapter 7, Theorem 17]{Kelley}), it is enough to check that the functions $x\mapsto D^N_\cdot(\omega)(x)$ from $K$ to $\cD([-1,T+1],\rr)$ are equicontinuous and that the set $\{D^N_\cdot(\omega)(x):\,\omega\in\Omega,\,N\in\nn\}$ is precompact in $\cD([-1,T+1],\rr)$ for every $x\in K$. The former is a consequence of the fact that the M1-distance between any $D^N_\cdot(\omega)(x)$, $D^N_\cdot(\omega)(x')$ is at most $|x-x'|$, as can be inferred by choosing synchronous parametrizations for the two functions and noticing that they differ by at most $|x-x'|$ pointwise. On the other hand, the precompactness of the set $\{D^N_\cdot(\omega)(x):\,\omega\in\Omega,\,N\in\nn\}$, for every $x\in K$, follows from the constancy on $[-1,0]$, $[T,T+1]$ and the monotonicity of this set's elements, $\sup_{N\in\nn,\,\omega\in\Omega}\, D^N_{-1}(\omega)(x)=\sup_{N\in\nn}\,\mathrm{dist}(x,\Gamma^N_{0-})<\infty$ (cf.~\eqref{aggIC}), and
the precompactness criterion for $\cD([-1,T+1],\rr)$ of \cite[Theorem 12.12.2]{Whitt}. 

\medskip

Thus far, we have shown that the sequence of the respective laws of $(\mu^{X,N},D^N)_{N\in\nn}$ is tight. Now, suppose that $\lim_{N\to\infty} \mu^{\wt X,N}$ assigns probability one to $C([0,T+1],\RR^d)$ almost surely. Thanks to the tightness of the respective laws of $(\mu^{\wt X,N},\mu^{X,N})_{N\in\nn}$ and to the Skorokhod Representation Theorem (see \cite[Chapter 3, Theorem 5.1]{Dudley}), we may assume that there exists an $\Omega_1\subset\Omega$ such that $\pp(\Omega_1)=1$ and for all $\omega\in\Omega_1$, the limits $\lim_{N\to\infty} \mu^{\wt X,N}(\omega)$, $\mu^X(\omega)=\lim_{N\to\infty} \mu^{X,N}(\omega)$ exist and the former assigns probability one to $C([0,T+1],\RR^d)$. Here, to simplify notation, we have relabeled the convergent subsequence of $(\mu^{\wt X,N},\mu^{X,N})_{N\in\nn}$. Since the M1-convergence to a continuous limit implies the uniform convergence to that limit (see, e.g., \cite[Theorem 4.2]{DIRT2}), it is easy to check (e.g., by an application of the Skorokhod Representation Theorem for $\cD([0,T+1],\rr^d)$) that the sequence $(\mu^{\wt X,N}(\omega))_{N\in\nn}$ is $C$-tight in the sense of \cite[Chapter~VI, Definition 3.25]{JacodShiryaev} for all $\omega\in\Omega_1$. As the oscillations and the absolute values of a stopped path do not exceed those of the original path, we deduce that the sequence $(\mu^{X,N}(\omega))_{N\in\nn}$ is also $C$-tight for all $\omega\in\Omega_1$. (Formally, the latter follows from \cite[Chapter~VI, Theorem 3.21 and Proposition 3.26, (i)$\Leftrightarrow$(iii)]{JacodShiryaev}.) In particular, its limit $\mu^X(\omega)$ assigns probability one to $C([0,T+1],\rr^d)$ for all $\omega\in\Omega_1$.
\end{proof}

\begin{rmk}\label{rmk:ext.GenGrowthModels}
The proof of Proposition \ref{prop:existence} reveals that Assumption \ref{ass:0} can be slightly relaxed. Instead of requiring that $\mathrm{dist}(\,\cdot\,,\Gamma^N_{0-})$ and $\mu^{\wt X,N}$ converge, it is enough to require that the sequence $(\mathrm{dist}(x,\Gamma^N_{0-}))_{N\in\nn}$ is bounded for each $x\in\rr^d$ and that any subsequence of $(\mu^{\wt X,N})_{N\in\nn}$ has a limit point. Moreover, the proof of Proposition~\ref{prop:existence} does not fully utilize the structure of an external MDLA process. Namely, as long as Assumption \ref{ass:0} (or the relaxation just mentioned) holds for $\{\wt X^{i,N}\}_{i=1}^N$, Proposition \ref{prop:existence} applies to any family of processes $\{X^{i,N}\}_{i=1}^N$ whose paths are constructed by stopping the respective paths of $\{\wt X^{i,N}\}_{i=1}^N$ in some manner (possibly not even at stopping times) and to any non-decreasing right-continuous family $\{\Gamma^N_t\}_{t\in[0,T]}$ of closed subsets of $\RR^d$.
\end{rmk}

Note that every limit point $D$ of $(D^N)_{N\in\nn}$ defines a limiting aggregate: 
\begin{equation}\label{def:aggr}
\Gamma_t:= \{x\in\RR^d:\,D_t(x)=0\},\;\; t\in[-1,T+1].
\end{equation}
The Skorokhod Representation Theorem for $C(\RR^d,\cD([-1,T+1],\RR))$ and the characterization of M1 convergence for monotone functions in \cite[Theorem 4.2]{DIRT2} yield that, with probability one, all functions $t\mapsto D_t(x)$, $x\in\RR^d$ are non-increasing and all functions $x\mapsto D_t(x)$, $t\in[-1,T+1]$ are $1$-Lipschitz. This observation implies that, with probability one, the family $\{\Gamma_t\}_{t\in[-1,T+1]}$ consists of closed subsets of $\rr^d$ and is non-decreasing. It is also clear that $\Gamma_\cdot$ is constant on $[-1,0)$ and on $[T,T+1]$. Finally, we claim that $\{\Gamma_t\}_{t\in[-1,T+1]}$ is right-continuous, in the sense that $\Gamma_t = \bigcap_{t<s\le T+1} \Gamma_s$, $t\in[-1,T+1)$. Indeed, $\Gamma_t\subset\bigcap_{t<s\le T+1} \Gamma_s$, $t\in[-1,T+1)$. Moreover, if there existed an $x\in\bigcap_{t<s\le T+1} \Gamma_s$ such that $x\notin\Gamma_t$, then $D_t(x)>0$ while $D_s(x)=0$, $t<s\le T+1$, contradicting the right-continuity of $D_\cdot(x)$.  

\begin{rmk}\label{rem:discont.Dbar}
We point out that, with probability one, the union of the discontinuity times of $t\mapsto D_t(x)$ over all $x\in\rr^d$ is countable. This is due to the $1$-Lipschitz property of $x\mapsto D_t(x)$, $t\in[-1,T+1]$ which lets us take the union over all $x\in\qq^d$ only. We refer to the complement of that union as the set of \textit{continuity times} of $D$. If a subsequence of $(D^N)_{N\in\nn}$ converges to $D$ in distribution (as in Proposition~\ref{prop:existence}), then the Skorokhod Representation Theorem allows us to find realizations of the random variables involved such that, with probability one, for all compact sets $K\subset\rr^d$ and all continuity times $t$ of $D$, it holds
$\sup_{x\in K} |D^N_t(x)-D_t(x)|\to0$ along the subsequence~in~consideration. (Here, we use the $1$-Lipschitz property of $x\mapsto D^N_t(x)$.)
\end{rmk}


\section{Limit points as absorbed Brownian motions}
\label{se:Prop}

Pick a limit point $(\mu^X,D)$ in distribution of a sequence of external MDLA processes, as in Proposition \ref{prop:existence}, and define $\Gamma$ as the aggregate associated with $D$ via \eqref{def:aggr}. Recall that, under Assumption \ref{ass:1}, the empirical measure of the underlying particle paths converges to the law of a standard Brownian motion with an independent initial condition $\xi$. The goal of this section is to show that, under Assumption \ref{ass:1} and with $d\in\{1,2\}$, almost every realization of $\mu^X$ is the distribution of a standard Brownian motion started from $\xi$ and absorbed upon hitting the aggregate $\Gamma$. Theorem~\ref{thm:char} gives the formal statement.

\begin{theorem}\label{thm:char}
For $d\in\{1,2\}$ and under Assumption \ref{ass:1}, let $(\mu^X,D)$, defined on a probability space $(\Omega,\mathcal{F},\PP)$, be a limit point in distribution of a sequence of MDLA processes as in Proposition \ref{prop:existence}. Then, for $\PP$-almost every $\omega\in\Omega$, the projection of $\mu^{X}(\omega)$ onto $\mathcal{P}(C([0,T],\RR^d))$, i.e., the restriction of this measure to shorter paths, coincides with the distribution of the stochastic process
\begin{equation}
\widehat{\omega}\mapsto \big(\xi(\widehat{\omega})+B_{t\wedge\tau(\omega,\widehat{\omega})}(\widehat{\omega})\big)_{t\in[0,T]},
\end{equation}
where $\xi$ and $B$ are instances of the random objects in Assumption \ref{ass:1}, defined on another probability space $(\widehat{\Omega},\widehat{\mathcal{F}},\widehat{\PP})$, and
\begin{equation}
\tau(\omega,\widehat{\omega}):=\inf\big\{t\in[0,T]:\,D_t(\omega)\big(\xi(\widehat{\omega})+B_t(\widehat{\omega})\big)=0\big\}.
\end{equation}
\end{theorem}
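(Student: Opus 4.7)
The plan is to reduce the claim to a continuity property of the hitting-time functional along almost every Brownian path, after using the Skorokhod Representation Theorem to pass to a common probability space on which $(\mu^{\wt X,N},\mu^{X,N},D^N)$ converges almost surely to $(\mu^{\wt X},\mu^X,D)$. Proposition~\ref{prop:existence} supplies a subsequential weak limit, and Assumption~\ref{ass:1} identifies $\mu^{\wt X}(\omega)$ as the law of $(\xi+B_t)_{t\in[0,T+1]}$ for $\PP$-a.e.~$\omega$. Once convergence is almost sure, I would work realization by realization and identify $\mu^X(\omega)$ with the pushforward of $\mu^{\wt X}(\omega)$ under a limiting stopping map.

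First I would observe that $\mu^{X,N}(\omega)$ is the pushforward of $\mu^{\wt X,N}(\omega)$ under
\[
S^N(\omega):\,x\mapsto (x_{t\wedge\tau^N(\omega,x)})_{t\in[0,T+1]},\quad \tau^N(\omega,x):=\inf\{t\ge0:\,\dist(x_t,\Gamma^N_{t-}(\omega))\le N^{-1/d}/2\},
\]
and define a candidate limit $S(\omega):x\mapsto (x_{t\wedge\tau(\omega,x)})_{t\in[0,T+1]}$ with $\tau(\omega,x):=\inf\{t\ge0:\,x_t\in\Gamma_t(\omega)\}$, which matches $\tau(\omega,\widehat\omega)$ in the statement. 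The theorem then follows once I show $(S^N(\omega))_*\mu^{\wt X,N}(\omega)\to S(\omega)_*\mu^{\wt X}(\omega)$ weakly, because by uniqueness of weak limits $\mu^X(\omega)=S(\omega)_*\mu^{\wt X}(\omega)$, and restricting to $[0,T]$ yields the law of $(\xi+B_{t\wedge\tau(\omega,\cdot)})_{t\in[0,T]}$.

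The heart of the matter is a $\mu^{\wt X}(\omega)$-almost sure continuity property of the map $(x,\Gamma)\mapsto x_{\cdot\wedge\tau(x,\Gamma)}$. Specifically, I would show that for $\mu^{\wt X}(\omega)$-a.e.~path $x$: if $x^N\to x$ in M1 and $\Gamma^N(\omega)\to\Gamma(\omega)$ in the distance sense at the continuity times of $D_\cdot(x)$ (cf.~Remark~\ref{rem:discont.Dbar}), then $\tau^N(\omega,x^N)\to\tau(\omega,x)$ and hence $S^N(\omega)(x^N)\to S(\omega)(x)$. The lim sup bound $\limsup\tau^N\le\tau$ follows from path convergence combined with the approximate inclusion of $\Gamma_{t-}$ into $\Gamma^N_{t-}$ near continuity points of $D$. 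The lim inf bound $\liminf\tau^N\ge\tau$ is the delicate direction: an approximating path could graze $\Gamma^N$ at times where the Brownian path merely touches $\Gamma$ without actually crossing into it. This is exactly where the restriction $d\le 2$ enters, via the \emph{stability of the crossing property} established in Proposition~\ref{prop:stabCrossing}: for planar Brownian motion, an $\eps$-approach to $\Gamma$ can be upgraded to a genuine entry into $\Gamma$ within time $o(1)$, with probability tending to one as $\eps\downarrow 0$. In $d\ge3$, polarity of small sets would prevent such an upgrade, which is why the theorem is confined to $d\in\{1,2\}$.

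The main obstacle is therefore the verification of Proposition~\ref{prop:stabCrossing} itself, together with the handling of the countable set of jump times of $\Gamma$, where the ``cascade'' absorption rule of Definition~\ref{def:extDLA} can attach a hypercube whose central particle never individually completed an $\eps$-approach. For these exceptional times one must show that the $\mu^{\wt X}(\omega)$-mass of paths affected by the cascade vanishes in the limit, which follows because a Brownian distribution assigns zero mass to each fixed jump front of $\Gamma$ and the jump set is countable. Combining the pointwise continuity of $S(\omega)$ on a $\mu^{\wt X}(\omega)$-full set with the almost-sure weak convergence $\mu^{\wt X,N}(\omega)\to\mu^{\wt X}(\omega)$ via a continuous-mapping-type argument for random probability measures then yields $(S^N(\omega))_*\mu^{\wt X,N}(\omega)\to S(\omega)_*\mu^{\wt X}(\omega)$, from which $\mu^X(\omega)=S(\omega)_*\mu^{\wt X}(\omega)$ and hence the theorem.
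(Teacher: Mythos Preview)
Your overall strategy---Skorokhod representation, viewing $\mu^{X,N}$ as the pushforward of $\mu^{\wt X,N}$ under a stopping map, and reducing to a continuity property of the hitting-time functional---matches the paper's approach. However, you have the two directions of the hitting-time convergence \emph{reversed}, and this is a genuine conceptual gap.

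The \emph{upper} bound $\limsup_N \tau^N(x^N)\le\tau(x)$ is the delicate direction requiring the crossing property, not the lower bound. Your claimed reason for the upper bound, an ``approximate inclusion of $\Gamma_{t-}$ into $\Gamma^N_{t-}$'', is false: Painlev\'e--Kuratowski convergence gives no such inclusion. Even though $D^N_\tau(x_\tau)\to D_\tau(x_\tau)=0$ at continuity times, the absorption threshold $N^{-1/d}/2$ also vanishes, so knowing $\dist(x^N_\tau,\Gamma^N_\tau)=o(1)$ does \emph{not} force $\tau^N\le\tau+o(1)$. What is needed is that the Brownian path $x$ does not merely touch $\Gamma$ at time $\tau$ but immediately \emph{encircles} a neighbourhood of $x_\tau$; then any $x^N$ uniformly close to $x$ encircles a nearby neighbourhood as well, and the assumed connectedness of $\Gamma^N$ forces $x^N$ to actually intersect $\Gamma^N$ shortly after $\tau$ (this is exactly Corollary~\ref{cor:stabCrossing} and the construction of $x_N$ in its proof). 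This is where $d\le 2$ enters.

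Conversely, the \emph{lower} bound $\liminf_N\tau^N(x^N)\ge\tau(x)$ is the soft direction. For any $\delta>0$, compactness of $[0,\tau-\delta]$ and lower semicontinuity of $t\mapsto D_t(x_t)$ give $\inf_{[0,\tau-\delta]}D_t(x_t)>0$; combining with $D^N\to D$ at continuity times and $x^N\to x$, one obtains $\liminf_N\inf_{[0,\tau-\delta]}D^N_t(x^N_t)>0$ via a subsequence-and-limit argument (this is the content of \eqref{eq.mainThm.Pf.case.a} and (est4) in the paper). Your worry about ``grazing $\Gamma^N$ before $\tau$'' is answered by this positive lower bound on the distance, not by the crossing property.

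Two smaller points. First, the crossing property of Proposition~\ref{prop:stabCrossing} is stated for continuous perturbations $b'$, whereas your approximating paths $x^N$ lie in $\cD([0,T+1],\RR^d)$; the paper handles this by passing to piecewise-linear interpolations $Y^{i,N}$ of $\wt X^{i,N}$ and checking $\tau^{i,N}\wedge T=\sigma^{i,N}\wedge T$ (see \eqref{eq.sec4.2.YiN.def}--\eqref{eq.sec4.2.tauiN.eq.sigmaiN}). You should do the same rather than work directly in M1. Second, the cascade mechanism and the jump set of $\Gamma$ play no special role in the argument and do not require the separate ``zero mass on jump fronts'' step you propose.
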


The proof of Theorem \ref{thm:char} is provided in Subsection \ref{subse:proof}. It relies on the convergence of the absorption times involved, which, in turn, follows from the stability of a ``crossing property'' of Brownian motion in dimensions one and two, shown in Subsection \ref{subse:crossing}. The latter result is well-known in dimension one, but its analogue for a planar Brownian motion is new and constitutes the main technical contribution of this paper, interesting on its own. The proof of the stability of the crossing property explains why we restrict the scope of Theorem \ref{thm:char} to $d\in\{1,2\}$ and to Brownian limit points. 

\begin{rmk}\label{rmk:ext.GenGrowthModels.2}
As alluded to in Remark \ref{rmk:relax.Ass1}, Theorem \ref{thm:char} and its proof hold also for limiting particles that are time-changed Brownian motions, since the findings of Subsection~\ref{subse:crossing} hold for such processes. Moreover, similarly to the case of Proposition~\ref{prop:existence}, the proof of Theorem \ref{thm:char} does not utilize the full structure of external MDLA processes (cf.~Remark \ref{rmk:ext.GenGrowthModels}). In particular, provided the underlying particle system $\{\wt X^{i,N}\}_{i=1}^N$ satisfies Assumption \ref{ass:1} (or its aforementioned relaxation), Theorem \ref{thm:char} applies to any non-decreasing right-continuous family $\Gamma^N=\{\Gamma^N_t\}_{t\in[0,T]}$ of closed subsets of $\RR^d$, in which all points of every $\Gamma^N_t$ can be connected to $\Gamma^N_{0-}$ by a continuous curve in $\Gamma^N_t$, and to any family of processes $\{X^{i,N}\}_{i=1}^N$ whose paths are obtained from the respective paths of $\{\wt X^{i,N}\}_{i=1}^N$ by stopping them upon hitting $\Gamma^N$.
\end{rmk}

Among other things, Theorem \ref{thm:char} allows us to describe the limit points of external MDLA processes in terms of the associated PDEs. To this end, we fix an $\omega\in\Omega$ and consider the time-$t$ distributions of the canonical process on $C([0,T],\rr^d)$ under $\mu^X(\omega)$.~By Theorem \ref{thm:char}, those equal the time-$t$ distributions of a Brownian motion absorbed upon hitting $\Gamma(\omega)$, for $\PP$-almost every $\omega$. A comparison with the unabsorbed Brownian motion reveals that the restrictions of the latter time-$t$ distributions to $\RR^d\setminus\Gamma_t(\omega)$ admit density functions, which we denote by $u(t,\cdot)$, dropping the dependence on $\omega$ for brevity. A straightforward application of It\^o's formula then shows that $u$ is a weak solution (in the PDE sense) of the heat equation
\begin{equation}\label{eq.linHeatEq.u}
\partial_t u=\frac{1}{2}\Delta u\quad\text{on}\quad Q_T(\omega):=\{(t,x)\in (0,T]\times\RR^d:\,x\notin \Gamma_t(\omega)\}
\end{equation}
with the restriction of the law of $\xi$ to $\rr^d\backslash\Gamma_0(\omega)$ as the initial condition and a zero lateral boundary condition. In fact, Weyl's Lemma (see, e.g., \cite[p.~90, step 4]{McK}) yields $u\in C^\infty(Q_T(\omega),\rr)$ and that it fulfills \eqref{eq.linHeatEq.u} pointwise. We can also ensure that the initial condition holds classically. However, in general, the zero lateral boundary condition only holds in a weak sense. The described Cauchy-Dirichlet problem (or, equivalently, its probabilistic formulation in terms of an absorbed Brownian motion) uniquely determines $u$ for each $\Gamma(\omega)$. One might conjecture that, in addition, the limit points of external MDLA processes satisfy the Stefan free-boundary condition which uniquely determines $\Gamma(\omega)$, as in the case $d=1$ (see \cite{DNS19}). We address this conjecture in Subsection \ref{subse:DLA.vs.Stefan}.

\subsection{Crossing property and its stability} \label{subse:crossing}

Subsection \ref{subse:crossing} is devoted to the stability of the crossing property of Brownian motion, employed in the proof of Theorem \ref{thm:char} and valuable by itself. The precise result is provided below, in Proposition \ref{prop:stabCrossing} and Corollary \ref{cor:stabCrossing}. Since its proof is trivial for $d=1$, we focus on the case $d=2$ in most of the subsequent statements. 

\medskip

We begin with the notion of the \emph{winding number} and its connection to the first enclosing time of a point by a continuous curve on the plane.\footnote{The upcoming preliminaries on the winding number are essentially known. However, the authors are unaware of a reference containing the exact facts needed herein, and thus give detailed proofs.} To this end, for $x,y,z\in\RR^2$, with $x\neq z\neq y$, we write $\texttt{arg}(z,x;y)\in[-\pi,\pi)$ for the angular coordinate of $y$ in the polar coordinate system centered at $z$ and rotated so that $\texttt{arg}(z,x;x)=0$.

\begin{definition}\label{def:crossing.Wnd.def}
For a point $z=(z_1,z_2)\in\RR^2$, a continuous curve $\gamma\!:[a_1,a_2]\rightarrow\RR^2$, and times $a_1\le t_1<t_2\le a_2$ such that $z\notin\gamma_{[t_1,t_2]}:=\{\gamma(t):\,t\in[t_1,t_2]\}$, the winding number with respect to $z$ of the arc of $\gamma$ over $[t_1,t_2]$ is defined by
\begin{equation}\label{eq.crossing.Wnd.def.1}
\mathtt{Wnd}(z,\gamma;t_1,t_2)=\sum_{n=1}^\infty \theta^{(n)},
\end{equation}
where $\theta^{(n)}:=\mathtt{arg}\big(z,\gamma(\tau^{(n-1)});\gamma(\tau^{(n)})\big)$, $n\ge1;$ $\tau^{(0)}:=t_1;$ and, for $n\ge1$,
\begin{equation*}
\begin{split}
\tau^{(n)}\!\!:=\! \inf\Big\{t\!\in\![\tau^{(n-1)},t_2]\!:\mathtt{arg}\big(z,(z_1\!+\!1,z_2);\gamma(t)\big)\!\in\!\frac{\pi}{2}\ZZ,
\mathtt{arg}\big(z,\gamma(\tau^{(n-1)});\gamma(t)\big)\!\neq\!0\Big\}\!\wedge\! t_2.
\end{split}
\end{equation*}
\end{definition}

The uniform continuity of $\gamma$ and $z\notin\gamma_{[t_1,t_2]}$ imply that the terms of the sequence $(\tau^{(n)})_{n\in\nn}$ equal $t_2$ from some $n_0\in\nn$ on, and the series in \eqref{eq.crossing.Wnd.def.1} has a finite number of non-zero summands. It is also clear from the definition that $\texttt{Wnd}(z,\gamma;t_1,t_2)=\texttt{Wnd}(z,\gamma\circ\lambda^{-1};\lambda(t_1),\lambda(t_2))$ for any continuous increasing function $\lambda$ on $[t_1,t_2]$, i.e., the winding number is stable with respect to reparametrizations that preserve the curve orientation. In addition, the winding number with respect to $0$ of a curve $\gamma$ does not change if $\gamma$ is scaled by a positive multiple
, i.e., the winding number is stable with respect to space scaling
. Lastly, we note that, for any $\alpha\in(0,2\pi)$, the winding number $\texttt{Wnd}(z,\gamma;t_1,t_2)$ belongs to $[-\alpha,\alpha]$ when $\gamma_{[t_1,t_2]}$ is contained in a cone of angle $\alpha$ centered at $z$. We use these observations repeatedly. 

\medskip

Next, we recall that the winding number of a piecewise smooth closed curve on the plane can be defined in terms of a contour integral. The following lemma shows, in particular, that Definition \ref{def:crossing.Wnd.def} is consistent with the contour integral definition, and, in some sense, can be reduced to it even if $\gamma$ is not piecewise smooth, provided $\gamma|_{[t_1,t_2]}$ is closed. To state this result, we identify $\RR^2$ with the complex plane $\CC$.

\begin{lemma}\label{le:crossing.Wnd.vs.Int}
Let $\gamma\!:[a_1,a_2]\rightarrow\CC$ be a continuous curve whose arc over $[t_1,t_2]$ is closed, i.e., $\gamma(t_1)=\gamma(t_2)$, and let $z\in\CC$ be such that $z\notin\gamma_{[t_1,t_2]}$. Write $\{\tau^{(n)}\}_{n=0}^\infty$ for the stopping times associated with $(z,\gamma;t_1,t_2)$, as in Definition \ref{def:crossing.Wnd.def}. Then, there exists an $\varepsilon_0>0$, which can be chosen as a non-decreasing function of $\min_{t\in[t_1,t_2]}|\gamma(t)-z|$ such that, for any $\varepsilon\in(0,\varepsilon_0)$ and any piecewise smooth curve $\zeta:\,[t_1,t_2]\to\CC$ satisfying $\zeta(\tau^{(n)})=\gamma(\tau^{(n)})$, $n\ge0$ and $\max_{t\in[t_1,t_2]} |\zeta(t)-\gamma(t)|<\varepsilon$, we have
\begin{equation}\label{eq.crossing.WndLemma.eq0}
\mathtt{Wnd}(z,\gamma;t_1,t_2) = \frac{1}{i} \oint_{\zeta} \frac{1}{w-z}\,\mathrm{d}w.
\end{equation}
\end{lemma}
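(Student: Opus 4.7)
The natural approach is to split the contour integral along the partition $\{\tau^{(n)}\}_{n\ge 0}$ and identify each resulting piece with the corresponding summand $\theta^{(n)}$ in Definition \ref{def:crossing.Wnd.def}. Set $r:=\min_{t\in[t_1,t_2]}|\gamma(t)-z|>0$ and declare $\varepsilon_0:=r/2$, which is manifestly non-decreasing in $r$. Since only finitely many $\tau^{(n)}$ are $<t_2$ and the line integral is additive, \eqref{eq.crossing.WndLemma.eq0} reduces to proving
$$\frac{1}{i}\int_{\tau^{(n-1)}}^{\tau^{(n)}}\frac{\zeta'(t)}{\zeta(t)-z}\,dt = \theta^{(n)}$$
on each subinterval where $\tau^{(n-1)}<\tau^{(n)}$.

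The crucial geometric observation is that $\gamma|_{[\tau^{(n-1)},\tau^{(n)}]}$ is confined to a closed half-plane $H_n$ of $\CC$ with apex $z$, bounded by the two cardinal rays from $z$ adjacent to the one through $\gamma(\tau^{(n-1)})$. Indeed, the definition of $\tau^{(n)}$ forbids $\gamma$ from touching any of the three non-starting cardinal rays before $\tau^{(n)}$, and continuity precludes reaching the opposite ray without first crossing an adjacent one. Together with $|\gamma(t)-z|\ge r$, this places $\gamma|_{[\tau^{(n-1)},\tau^{(n)}]}$ inside $H_n\cap\{|w-z|\ge r\}$. For $\varepsilon<\varepsilon_0=r/2$, the closeness hypothesis forces $\zeta|_{[\tau^{(n-1)},\tau^{(n)}]}$ into the $\varepsilon$-neighborhood of this set, which, after translating $z$ to the origin and rotating $H_n$ to the upper half-plane, is contained in
$$U_n := \{w\in\CC:\mathrm{Im}\,w>-\varepsilon\}\setminus\{w\in\CC:|w|\le r-\varepsilon\}.$$
Because $r-\varepsilon>\varepsilon$, the excluded disk crosses strictly below the line $\mathrm{Im}\,w=-\varepsilon$, so $U_n$ is a simply connected open subset of $\CC\setminus\{0\}$; a continuous single-valued branch $\log_n$ of $\log(w-z)$ is therefore well-defined on the pull-back of $U_n$.

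On this branch the fundamental theorem of calculus gives
$$\frac{1}{i}\int_{\tau^{(n-1)}}^{\tau^{(n)}}\frac{\zeta'(t)}{\zeta(t)-z}\,dt = \mathrm{Im}\,\log_n\!\big(\zeta(\tau^{(n)})-z\big) - \mathrm{Im}\,\log_n\!\big(\zeta(\tau^{(n-1)})-z\big).$$
The matching conditions $\zeta(\tau^{(n)})=\gamma(\tau^{(n)})$ and $\zeta(\tau^{(n-1)})=\gamma(\tau^{(n-1)})$ let me replace $\zeta$ by $\gamma$ on the right-hand side. The resulting quantity is the change in $\arg(\cdot-z)$, measured in the branch $\log_n$, between the endpoints of $\gamma|_{[\tau^{(n-1)},\tau^{(n)}]}$; confinement of $\gamma$ to $H_n$ keeps this change in $[-\pi/2,\pi/2]$ and rules out any branch ambiguity, so it agrees with $\theta^{(n)}=\mathtt{arg}(z,\gamma(\tau^{(n-1)});\gamma(\tau^{(n)}))$ in Definition \ref{def:crossing.Wnd.def}. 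Summing over $n$ yields \eqref{eq.crossing.WndLemma.eq0}.

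The principal obstacle — which fixes the form of $\varepsilon_0$ and its monotone dependence on $r$ — is the simply-connectedness claim for the $\varepsilon$-neighborhood of $H_n\cap\{|w-z|\ge r\}$, made in the second paragraph. Once this geometric fact is in hand, the rest is the standard argument that $1/(w-z)$ admits a continuous antiderivative on a simply connected domain avoiding $z$.
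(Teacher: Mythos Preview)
Your overall strategy—splitting the contour integral along the partition $\{\tau^{(n)}\}$, confining each piece of $\gamma$ to a half-plane with apex $z$, and invoking a single-valued branch of the logarithm on a simply connected neighborhood—is sound and close in spirit to the paper's argument. The half-plane confinement and the simply-connectedness of $U_n$ when $\varepsilon<r/2$ are both correct.

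There is, however, a genuine computational slip. The displayed identity
\[
\frac{1}{i}\int_{\tau^{(n-1)}}^{\tau^{(n)}}\frac{\zeta'(t)}{\zeta(t)-z}\,dt \;=\; \mathrm{Im}\,\log_n\!\big(\zeta(\tau^{(n)})-z\big) - \mathrm{Im}\,\log_n\!\big(\zeta(\tau^{(n-1)})-z\big)
\]
is false: the fundamental theorem of calculus gives the full logarithm, not just its imaginary part. Writing $\log_n(w)=\log|w|+i\arg_n(w)$, the left-hand side equals
\[
\big[\arg_n(\gamma(\tau^{(n)})-z)-\arg_n(\gamma(\tau^{(n-1)})-z)\big]
\;-\;i\big[\log|\gamma(\tau^{(n)})-z|-\log|\gamma(\tau^{(n-1)})-z|\big],
\]
whose real part is indeed $\theta^{(n)}$ but whose imaginary part is generally nonzero. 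Consequently your per-piece claim $\frac{1}{i}\int_{\tau^{(n-1)}}^{\tau^{(n)}}\cdots=\theta^{(n)}$ is wrong as stated. The fix is immediate: the modulus terms $\log|\gamma(\tau^{(n)})-z|$ are branch-independent and telescope to $\log|\gamma(t_2)-z|-\log|\gamma(t_1)-z|=0$ because the curve is closed, so after summation only $\sum_n\theta^{(n)}$ survives and \eqref{eq.crossing.WndLemma.eq0} follows.

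The paper handles this same issue differently: instead of tracking $\log|\cdot|$, it introduces radial segments $\ell^{(n)}$ from $\gamma(\tau^{(n)})$ to a fixed circle $\partial O(\varepsilon_0)$ and applies Cauchy's theorem to the closed contour formed by $\zeta|_{[\tau^{(n-1)},\tau^{(n)}]}$, the two radial segments, and an arc of $\partial O(\varepsilon_0)$, obtaining $\int_{\tau^{(n-1)}}^{\tau^{(n)}}=i\theta^{(n)}+\int_{\ell^{(n-1)}}-\int_{\ell^{(n)}}$. The segment integrals telescope for the same reason your $\log|\cdot|$ terms do. Both routes are equivalent; yours is slightly more direct once the missing real part is reinstated.
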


\begin{proof}
We assume without loss of generality that $z\!=\!0$, set $\varepsilon_0=\min_{t\in[t_1,t_2]}|\gamma(t)|/2>0$, and denote by $\partial O(\varepsilon_0)$ the circle of radius $\varepsilon_0$ around $0$. We then claim that, for any $\varepsilon\in(0,\varepsilon_0)$ and any $\zeta$ satisfying the conditions in the lemma, it holds
\begin{equation}\label{eq.crossing.WndLemma.eq1}
\int_{\tau^{(n-1)}}^{\tau^{(n)}} \frac{1}{\zeta(t)}\,\mathrm{d}\zeta(t) =
i\theta^{(n)} + \int_{\ell^{(n-1)}} \frac{1}{w}\,\mathrm{d}w - \int_{\ell^{(n)}} \frac{1}{w}\,\mathrm{d}w,\;\; n\geq 1,
\end{equation}
where each $\ell^{(n)}$ is the oriented segment of the line going through  $\gamma(\tau^{(n)})\!=\!\zeta(\tau^{(n)})$ and $0$ that connects the former point to $\partial O(\varepsilon_0)$. The claim is readily verified by considering the contour obtained by the orientation reversal of $\ell^{(n-1)}$ and its concatenation with $\zeta_{[\tau^{(n-1)},\tau^{(n)}]}$, $\ell^{(n)}$ and the appropriate arc of $\partial O(\varepsilon_0)$. Indeed, Cauchy's Integral Theorem implies that the integral of $w\mapsto\frac{1}{w}$ along this contour vanishes (because $0$ lies on the outside of the contour by the definition of $\tau^{(n)}$ and the choice of $\zeta$).

\medskip

Summing \eqref{eq.crossing.WndLemma.eq1} over $n=1,\,2,\,\ldots,\,n_0$, where $n_0$ is the smallest index such that $\tau^{(n)}=\tau^{(n_0)}=t_2$, $n\geq n_0$, and observing that $\ell^{(0)}$ and $\ell^{(n_0)}$ refer to the same oriented segment due to $\gamma(\tau^{(0)})=\gamma(t_1)=\gamma(t_2)=\gamma(\tau^{(n_0)})$, we deduce
\begin{equation*}
\int_{t_1}^{t_2} \frac{1}{\zeta(t)}\,\mathrm{d}\zeta(t) =
i \sum_{n=1}^{n_0} \theta^{(n)} = i\,\texttt{Wnd}(0,\gamma;t_1,t_2),
\end{equation*}
from which \eqref{eq.crossing.WndLemma.eq0} follows immediately. 
\end{proof}

The right-hand side of \eqref{eq.crossing.WndLemma.eq0} belongs to $2\pi\ZZ$ (see, e.g., \cite[Theorem~10.10]{Rudin}). Hence, since every closed arc of a continuous curve $\gamma$ admits a piecewise smooth approximation $\zeta$ as described in Lemma \ref{le:crossing.Wnd.vs.Int}, we conclude that the winding number of a closed arc of a continuous curve with respect to a point not on this arc is also an element of $2\pi\ZZ$.

\medskip

Next, we show the continuity of the winding number, used in the subsequent analysis.

\begin{lemma}\label{le:crossing.Wnd.cont}
Let $\gamma\!:[a_1,a_2]\rightarrow\CC$ be a continuous curve and let $z\in\CC\backslash\gamma_{[a_1,a_2]}$. Then, the function $s\mapsto\mathtt{Wnd}(z,\gamma;s,t)$ $($resp.~$t\mapsto\mathtt{Wnd}(z,\gamma;s,t)$$)$ is continuous on $[a_1,t)$ $($resp.~$(s,a_2]$$)$ for every $t\in(a_1,a_2]$ $($resp.~$s\in[a_1,a_2)$$)$. In addition, $\widehat{\gamma}\mapsto\mathtt{Wnd}(z,\widehat{\gamma};a_1,a_2)$ is continuous on the set $\{\widehat{\gamma}\in C([a_1,a_2],\CC)\!:\widehat{\gamma}(a_1)\!=\!\widehat{\gamma}(a_2),\,z\!\notin\!\widehat{\gamma}_{[a_1,a_2]}\}$ in the topology of uniform convergence.  
\end{lemma}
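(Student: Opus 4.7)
My plan is to reduce Lemma \ref{le:crossing.Wnd.cont} to the standard continuous-lift description of the argument of a nonvanishing planar curve. For any continuous $\eta:[a_1,a_2]\to\CC\setminus\{0\}$, a classical covering argument produces a continuous $\vartheta:[a_1,a_2]\to\RR$ with $\eta(r)=|\eta(r)|e^{i\vartheta(r)}$, unique up to addition of a constant in $2\pi\ZZ$. Applying this with $\eta:=\gamma-z$ yields a continuous argument $\theta_\gamma$. The central step will be to establish the identity
\begin{equation}\label{eq.ProofProp.WndIsArg}
\mathtt{Wnd}(z,\gamma;s,t)=\theta_\gamma(t)-\theta_\gamma(s),\qquad a_1\le s<t\le a_2,
\end{equation}
after which the claimed continuities reduce to the (standard) stability of $\theta_\gamma$ in its endpoints and under uniform perturbations of $\gamma$.

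To prove \eqref{eq.ProofProp.WndIsArg} I will analyse the stopping times $\{\tau^{(n)}\}$ of Definition \ref{def:crossing.Wnd.def}. The key geometric observation is that on $[\tau^{(n-1)},\tau^{(n)}]$ the curve does not cross the horizontal or vertical rays through $z$ non-trivially, so $\gamma_{[\tau^{(n-1)},\tau^{(n)}]}$ lies in the closure of at most two adjacent open quadrants centred at $z$, and in particular inside a closed half-plane through $z$. On such a half-plane the principal argument based at $\gamma(\tau^{(n-1)})-z$ coincides with the increment of the continuous lift, yielding $\theta_\gamma(\tau^{(n)})-\theta_\gamma(\tau^{(n-1)})=\mathtt{arg}(z,\gamma(\tau^{(n-1)});\gamma(\tau^{(n)}))=\theta^{(n)}$. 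Telescoping and using $\tau^{(n)}=t$ for $n$ large give \eqref{eq.ProofProp.WndIsArg}.

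Once \eqref{eq.ProofProp.WndIsArg} is in hand, continuity of $s\mapsto\mathtt{Wnd}(z,\gamma;s,t)$ and $t\mapsto\mathtt{Wnd}(z,\gamma;s,t)$ is immediate from the continuity of $\theta_\gamma$. For the final statement of the lemma, take $\widehat{\gamma}_n\to\widehat{\gamma}$ uniformly with all curves closed and avoiding $z$, and let $\theta_n$, $\theta$ denote continuous arguments of $\widehat{\gamma}_n-z$ and $\widehat{\gamma}-z$. For $n$ large the ratio $q_n(r):=(\widehat{\gamma}_n(r)-z)/(\widehat{\gamma}(r)-z)$ satisfies $|q_n(r)-1|<1/2$ uniformly in $r$, so the principal logarithm $\mathrm{Log}\,q_n$ is well-defined, continuous and tends to $0$ uniformly; its imaginary part equals $\theta_n-\theta-2\pi k_n$ for some integer $k_n$ independent of $r$, and after the harmless normalisation $k_n=0$ we obtain $\theta_n\to\theta$ uniformly. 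In particular $\theta_n(a_2)-\theta_n(a_1)\to\theta(a_2)-\theta(a_1)$; by \eqref{eq.ProofProp.WndIsArg} and the integrality of the winding number of a closed continuous curve (recorded after Lemma \ref{le:crossing.Wnd.vs.Int}), both sides lie in $2\pi\ZZ$, forcing equality for all sufficiently large $n$.

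The main (mild) obstacle I anticipate is the careful verification of the geometric fact that the stopping times $\tau^{(n)}$ really do confine $\gamma_{[\tau^{(n-1)},\tau^{(n)}]}$ to a closed half-plane through $z$. The subtle point is the role of the ``trivial crossings'' excluded by the extra condition $\mathtt{arg}(z,\gamma(\tau^{(n-1)});\gamma(t))\neq0$ in Definition \ref{def:crossing.Wnd.def}: one must check that these excluded crossings cannot make the continuous argument wrap around $\pm\pi$ between consecutive stopping times. Apart from this geometric bookkeeping, all the steps above are routine.
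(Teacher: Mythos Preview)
Your approach is correct and takes a genuinely different route from the paper. The paper treats the two assertions separately: for continuity in the endpoints it proves the additivity $\mathtt{Wnd}(z,\gamma;t_1,t_2)+\mathtt{Wnd}(z,\gamma;t_2,t_3)=\mathtt{Wnd}(z,\gamma;t_1,t_3)$ directly from Definition~\ref{def:crossing.Wnd.def} when $t_3-t_2$ is small enough that $\gamma_{[t_2,t_3]}$ lies in a cone of angle $\pi/2$ at $z$, then partitions a general interval; for continuity in $\widehat{\gamma}$ it invokes the contour-integral representation of Lemma~\ref{le:crossing.Wnd.vs.Int}, building piecewise-linear interpolants $\zeta^{(k)}$ and $\widetilde{\zeta}^{(k)}$ of $\gamma^{(k)}$ and $\gamma^{(\infty)}$ through the union of their stopping times and applying Cauchy's theorem to match $\oint_{\zeta^{(k)}}(w-z)^{-1}\,\mathrm{d}w$ with $\oint_{\widetilde{\zeta}^{(k)}}(w-z)^{-1}\,\mathrm{d}w$. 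Your identity $\mathtt{Wnd}(z,\gamma;s,t)=\theta_\gamma(t)-\theta_\gamma(s)$ subsumes both parts at once: endpoint continuity is immediate from the continuity of $\theta_\gamma$, and curve continuity reduces to the standard stability of continuous argument lifts under uniform perturbations, bypassing Lemma~\ref{le:crossing.Wnd.vs.Int} and complex integration entirely. The trade-off is that the paper's argument is self-contained given its earlier lemmas, whereas yours imports the covering/lifting construction as a black box. Your anticipated obstacle is not a real difficulty: for $n\ge2$ the point $\gamma(\tau^{(n-1)})$ lies on one of the four coordinate half-axes through $z$ and on $(\tau^{(n-1)},\tau^{(n)})$ the curve avoids the other three, confining it to the closed half-plane bounded by the line through $z$ perpendicular to the axis carrying $\gamma(\tau^{(n-1)})$; for $n=1$ with $\gamma(t_1)$ off all axes the curve is confined to a single closed quadrant, which is stronger still.
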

\begin{proof}
The first statement follows from the additivity of the winding number:
\begin{equation}\label{additivity}
\texttt{Wnd}(z,\gamma;t_1,t_2) + \texttt{Wnd}(z,\gamma;t_2,t_3) = \texttt{Wnd}(z,\gamma;t_1,t_3),\;\; a_1\leq t_1<t_2<t_3\leq a_2.
\end{equation}
The above additivity is verified directly from Definition \ref{def:crossing.Wnd.def} for sufficiently small $t_3-t_2$, such that $\gamma_{[t_2,t_3]}$ is contained in a cone of angle $\pi/2$ centered at $z$. The general case, then, follows by partitioning $[t_1,t_3]$ into sufficiently small intervals.

\medskip

The continuity of $\texttt{Wnd}(z,\cdot\,;a_1,a_2)$ results from Lemma \ref{le:crossing.Wnd.vs.Int}.~Indeed, take $(\gamma^{(k)})_{k\in\nn}$, $\gamma^{(\infty)}$ in $\{\widehat{\gamma}\in C([a_1,a_2],\CC)\!:\widehat{\gamma}(a_1)\!=\!\widehat{\gamma}(a_2),\,z\!\notin\!\widehat{\gamma}_{[a_1,a_2]}\}$ with $\max_{[a_1,a_2]} |\gamma^{(k)}-\gamma^{(\infty)}|\to0$ as $k\rightarrow\infty$. For every $k\in\nn$, we choose a finite interval partition $\Pi^{(k)}$ of $[a_1,a_2]$ whose break points include the stopping times $\{\tau^{(k,n)}\}_{n=0}^\infty$ associated with $\gamma^{(k)}$ per Definition~\ref{def:crossing.Wnd.def}, as well as the corresponding stopping times $\{\tau^{(\infty,n)}\}_{n=0}^\infty$ associated with~$\gamma^{(\infty)}$. Moreover, let $\zeta^{(k)}$ be the closed continuous curve interpolating linearly, on the intervals of~$\Pi^{(k)}$, between the values of $\gamma^{(k)}$. We can ensure that the mesh of $\Pi^{(k)}$ tends to $0$ as $k\rightarrow\infty$ and that it holds
\begin{equation}\label{eq.crossing.Wnd.cont.Pf.eq1}
\texttt{Wnd}(z,\gamma^{(k)};a_1,a_2) = \frac{1}{i} \oint_{\zeta^{(k)}} \frac{1}{w-z}\,\mathrm{d}w,\;\;k\in\nn
\end{equation}
(see Lemma \ref{le:crossing.Wnd.vs.Int}). 
If needed, we refine $\Pi^{(k)}$, without altering $\zeta^{(k)}$, so that there exists a break point of $\Pi^{(k)}$ between any two neighboring $\tau^{(\infty,n)}$ and $\tau^{(\infty,n')}$. Hereby, since all curves involved are closed, we treat the time domain $[a_1,a_2]$ as a circle (after identifying $a_1$ with $a_2$) when determining which break points of $\Pi^{(k)}$ are neighboring. 

\medskip

Further, for every $k\in\nn$, we define $\widetilde{\zeta}^{(k)}$ by interpolating linearly between the values of $\gamma^{(\infty)}$ at the points $\{\tau^{(\infty,n)}\}_{n=0}^\infty$ and the values of $\zeta^{(k)}$ at the other break points of~$\Pi^{(k)}$. Then, upon associating $\varepsilon_0\!>\!0$ with $\gamma^{(\infty)}$ via Lemma \ref{le:crossing.Wnd.vs.Int}, $\max_{[a_1,a_2]} |\widetilde{\zeta}^{(k)}-\gamma^{(\infty)}|<\varepsilon_0$ for all $k\in\nn$ large enough, and for such $k$,
\begin{equation}\label{eq.crossing.Wnd.cont.Pf.eq2}
\texttt{Wnd}(z,\gamma^{(\infty)};a_1,a_2) = \frac{1}{i} \oint_{\widetilde{\zeta}^{(k)}} \frac{1}{w-z}\,\mathrm{d}w
\end{equation}
by Lemma \ref{le:crossing.Wnd.vs.Int}. Note that $\zeta^{(k)}$ and $\widetilde{\zeta}^{(k)}$ differ only on the time intervals bounded by a $\tau^{(\infty,n)}$ and a neighboring break point of $\Pi^{(k)}$. We write $I^{(k,n)}_-$ and $I^{(k,n)}_+$ for these intervals. The concatenation of the arc of $\zeta^{(k)}$ over $I^{(k,n)}_-\cup I^{(k,n)}_+$ with the orientation-reversed arc of $\widetilde{\zeta}^{(k)}$ over $I^{(k,n)}_-\cup I^{(k,n)}_+$ leads to a closed contour along which the function $w\mapsto\frac{1}{w-z}$ integrates to $0$, for all $k\in\nn$ large enough, by Cauchy's Integral Theorem (recall that $z\notin\gamma^{(\infty)}_{[a_1,a_2]}$). Thus, the right-hand sides of \eqref{eq.crossing.Wnd.cont.Pf.eq1} and \eqref{eq.crossing.Wnd.cont.Pf.eq2} coincide for all $k\in\nn$ large enough, implying the same for the left-hand sides. 
\end{proof}

The additivity property \eqref{eq.crossing.Wnd.cont.Pf.eq2} reveals that the winding number with respect to $z$ of a continuous curve $\gamma$ does not change if $\gamma$ is rotated around $z$. This can be seen by splitting $\gamma$ into arcs, each contained in a cone of angle less than $\frac{\pi}{2}$ centered at $z$, and obtaining the rotational invariance of the winding numbers of these arcs directly from Definition \ref{def:crossing.Wnd.def}. It is also worth recording that the continuous functional $\texttt{Wnd}(z,\cdot\,;a_1,a_2)$ with values in $2\pi\ZZ$ must be constant in a neighborhood of every closed continuous curve on $[a_1,a_2]$ that does not pass through $z$.

\medskip

We can now turn our attention to the connection between the winding number and the question of whether a point is enclosed by a curve. The following corollary of Lemma \ref{le:crossing.Wnd.vs.Int} shows that, if the winding number with respect to $z$ of a closed arc of a continuous curve is non-zero, then $z$ is enclosed by this arc.

\begin{corollary}\label{cor:crossing.Wnd.vsClosing}
Let $\gamma\!:[a_1,a_2]\rightarrow\RR^2$ be a continuous curve whose arc over $[t_1,t_2]$ is closed, i.e., $\gamma(t_1)=\gamma(t_2)$, and let $z\in\RR^2$ be so that $z\notin\gamma_{[t_1,t_2]}$. If $\,\mathtt{Wnd}(z,\gamma;t_1,t_2)\neq0$, then $z$ belongs to a bounded connected component of $\RR^2\setminus \gamma_{[t_1,t_2]}$. 
\end{corollary}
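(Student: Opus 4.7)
The plan is to argue by contradiction: assume $z$ lies in the unbounded connected component $U$ of $\RR^2\setminus\gamma_{[t_1,t_2]}$ and deduce that $\mathtt{Wnd}(z,\gamma;t_1,t_2)=0$. The strategy is the classical one---view the winding number as a function of the base point, show it is continuous and $2\pi\ZZ$-valued (hence locally constant), and then evaluate it at a point very far from the origin.

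First, I would establish that $f(w):=\mathtt{Wnd}(w,\gamma;t_1,t_2)$ is continuous in $w$ on $\RR^2\setminus\gamma_{[t_1,t_2]}$. Directly from Definition \ref{def:crossing.Wnd.def}, the winding number is translation invariant, so $f(w)=\mathtt{Wnd}(0,\gamma-w;t_1,t_2)$. If $w_n\to w$ with $w_n,w\notin\gamma_{[t_1,t_2]}$, then the translated closed curves $\gamma-w_n$ converge uniformly to $\gamma-w$ and eventually avoid $0$ along with the limit curve; Lemma \ref{le:crossing.Wnd.cont} then yields $f(w_n)\to f(w)$. Since $\gamma|_{[t_1,t_2]}$ is closed, the remark recorded immediately after Lemma \ref{le:crossing.Wnd.vs.Int} ensures $f$ is $2\pi\ZZ$-valued on its domain. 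A continuous function into a discrete set is locally constant, so $f$ is constant on each connected component of $\RR^2\setminus\gamma_{[t_1,t_2]}$; in particular, $f$ is constant on $U$.

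Next, I would evaluate $f$ at a distant point of $U$. The compact arc $\gamma_{[t_1,t_2]}$ lies in some disk $B_R$; for $|w|>2R$, the point $w$ belongs to $U$, and $\gamma_{[t_1,t_2]}$ is contained in a cone with vertex $w$ whose opening angle is at most $2\arcsin(R/|w|)<\pi$. By the observation recorded after Definition \ref{def:crossing.Wnd.def}, $|f(w)|\le\pi<2\pi$, which combined with $f(w)\in 2\pi\ZZ$ forces $f(w)=0$. The constancy of $f$ on $U$ established in the previous step then yields $f\equiv0$ on $U$, contradicting $f(z)\ne0$; hence $z$ must lie in a bounded component of $\RR^2\setminus\gamma_{[t_1,t_2]}$. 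The only mild subtlety is upgrading the continuity-in-curve statement of Lemma \ref{le:crossing.Wnd.cont} into continuity in the base point, but this is immediate via the translation trick above, since uniform convergence is preserved under translations. Every other ingredient has already been laid out in Definition \ref{def:crossing.Wnd.def} and the discussion surrounding Lemma \ref{le:crossing.Wnd.vs.Int}.
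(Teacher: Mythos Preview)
Your proof is correct, but it follows a different route from the paper's. The paper argues by contradiction too, yet it goes through Lemma~\ref{le:crossing.Wnd.vs.Int} directly: given a path $\lambda$ from $z$ to infinity in the unbounded component, it picks a piecewise smooth $\zeta$ close enough to $\gamma$ that $\zeta_{[t_1,t_2]}$ still misses $\lambda$, so $z$ lies in the unbounded component of $\RR^2\setminus\zeta_{[t_1,t_2]}$ as well; then \cite[Theorem~10.10]{Rudin} gives $\oint_\zeta\frac{1}{w-z}\,\mathrm{d}w=0$, and Lemma~\ref{le:crossing.Wnd.vs.Int} converts this into $\mathtt{Wnd}(z,\gamma;t_1,t_2)=0$. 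Your argument instead leverages Lemma~\ref{le:crossing.Wnd.cont}: you turn continuity in the curve into continuity in the base point via translation, use the $2\pi\ZZ$-valuedness recorded after Lemma~\ref{le:crossing.Wnd.vs.Int} to get local constancy, and then kill the constant on the unbounded component with the cone estimate from the remarks after Definition~\ref{def:crossing.Wnd.def}. The paper's version is a one-liner once the contour-integral link is in hand but leans on an external reference; yours is slightly longer yet entirely self-contained within the toolkit the paper has already built.
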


\begin{proof}
We argue by contradiction and assume that $z$ belongs to the unbounded connected component of $\RR^2\setminus \gamma_{[t_1,t_2]}$. Then, there exists a semi-infinite continuous path $\lambda$ connecting $z$ to infinity without intersecting $\gamma_{[t_1,t_2]}$. Next, we consider any $\zeta$ satisfying the conditions of Lemma \ref{le:crossing.Wnd.vs.Int} that is sufficiently close to $\gamma$, so that $\zeta_{[t_1,t_2]}$ does not intersect $\lambda$. Then, $z$ is in the unbounded connected component of $\RR^2\setminus \zeta_{[t_1,t_2]}$, and the right-hand side of \eqref{eq.crossing.WndLemma.eq0} vanishes by \cite[Theorem~10.10]{Rudin}. Therefore, $\texttt{Wnd}(z,\gamma;t_1,t_2)=0$ by Lemma \ref{le:crossing.Wnd.vs.Int}, which gives us the desired contradiction.
\end{proof}

Motivated by Corollary \ref{cor:crossing.Wnd.vsClosing}, we introduce the following definition.

\begin{definition}\label{def:crossing.FirstEnclosingTime}
The first enclosing time of a point $z\in\RR^2$ by the arc over $[t_1,t_2]$ of a continuous curve $\gamma:\,[a_1,a_2]\rightarrow\RR^2$, with $z\notin\gamma_{[t_1,t_2]}$, is defined by
$$
\mathtt{T}_c(z,\gamma;t_1,t_2):=\inf\{t\in[t_1,t_2]:\,\exists\,s\in[t_1,t)\text{ with }\gamma(s)=\gamma(t)\text{ and }\,\mathtt{Wnd}(z,\gamma;s,t)\neq0\}.
$$
\end{definition}

The continuity of $\gamma$ and $\texttt{Wnd}(z,\gamma;\cdot\,,\cdot\,)$ (cf.~\eqref{additivity}) implies that, if $\texttt{T}_c(z,\gamma;t_1,t_2)<\infty$ (as usual, we set $\inf\varnothing=\infty$), then
\begin{align*}
\exists\,s\in[t_1,\texttt{T}_c(z,\gamma;t_1,t_2)):\,\gamma(s)=\gamma(\texttt{T}_c(z,\gamma;t_1,t_2))
\text{ and }\texttt{Wnd}(z,\gamma;s,\texttt{T}_c(z,\gamma;t_1,t_2))\neq0.
\end{align*}

\begin{rmk}\label{rem:crossing.Wnd.vsClosing}
The latter observation together with Corollary \ref{cor:crossing.Wnd.vsClosing} reveal that, whenever $\texttt{T}_c:=\texttt{T}_c(z,\gamma;t_1,t_2)<\infty$, there is an $s\in[t_1,\texttt{T}_c)$ such that $\gamma(s)=\gamma(\texttt{T}_c)$ and $z$ lies in a bounded connected component of $\RR^2\setminus \gamma_{[s,\texttt{T}_c]}$.
\end{rmk}

We can now proceed to the crossing property of planar Brownian motion and to the stability of this property. The next lemma is similar to \cite[Section VII.1, Lemma 1]{LeGall}. It shows that almost every path of a planar Brownian motion immediately encloses its initial point. The main difference between our lemma and \cite[Section VII.1, Lemma~1]{LeGall} is in the definition of the first enclosing time. Herein, we require a non-zero winding number to declare that a point is enclosed (see Definition \ref{def:crossing.FirstEnclosingTime}), while \cite[Section VII.1, Lemma~1]{LeGall} labels a point as enclosed when it lies in a bounded connected component of the complement of the curve.  The former implies the latter by Corollary~\ref{cor:crossing.Wnd.vsClosing}. In the remainder of this subsection, we denote by $B^z$ the planar Brownian motion started from $z\in\RR^2$, constructed on a probability space with a probability measure $\PP$, and let
\begin{equation}
\tau^z_r:=\inf\{t\ge0:\,|B^z_t-z|=r\},\;\; r>0.
\end{equation}
We also regard the realizations of $B^z$ as curves in $\RR^2$ whenever needed.

\begin{lemma}\label{le:crossing}
For all $z\in\RR^2$ and $\delta>0$, it holds\footnote{The measurability of the event $\{\texttt{T}_c(z,B^z;\tau^z_{\delta'},\tau^z_{\delta})<\infty\}$ follows from the measurability of the winding number, which, in turn, can be verified directly from Definition \ref{def:crossing.Wnd.def}.}
\begin{equation}
\lim_{\delta'\downarrow0}\,\PP(\mathtt{T}_c(z,B^z;\tau^z_{\delta'},\tau^z_{\delta})<\infty)=1.
\end{equation}
\end{lemma}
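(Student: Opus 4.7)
The plan is to use L\'evy's theorem on conformal invariance to translate the question into one about planar Brownian motion in a tall vertical strip, and then to exploit the fact that a planar Brownian motion, run long enough, almost surely realises any prescribed non-zero vector as a difference of two of its values.

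\medskip

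First, by translation invariance take $z=0$, and by Brownian scaling fix $\delta=1$; write $\epsilon:=\delta'$ and $L:=\log(1/\epsilon)$, so that the claim amounts to showing the probability tends to one as $L\to\infty$. Next, choose a continuous branch of $\log$ along the path of $B^0$ and set $W_t:=\log(B^0_t)$ for $t\in[\tau^0_\epsilon,\tau^0_1]$. By L\'evy's theorem $W$ is a time-changed standard planar Brownian motion, the annulus $\{\epsilon\le|w|\le 1\}$ is mapped to the vertical strip $\Sigma_L:=[-L,0]\times\RR$, and $W|_{[\tau^0_\epsilon,\tau^0_1]}$ is an excursion crossing $\Sigma_L$ from its left edge to its right edge. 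Since the winding of $B^0$ on any sub-interval $[s,t]$ equals the imaginary-part increment of this continuous log, the event $\{\mathtt{T}_c(0,B^0;\tau^0_\epsilon,\tau^0_1)<\infty\}$ is exactly the event that, during the above excursion, there exist $s<t$ with
\[
W(t)-W(s)\in 2\pi\ii\,(\ZZ\setminus\{0\}).
\]

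Undoing the L\'evy time-change produces a standard planar Brownian motion $\widetilde{W}$, whose exit time $\sigma_L$ from $\Sigma_L$ is of order $L^2$ and diverges in probability as $L\to\infty$. It therefore suffices to show that, with probability tending to one, there exist $0\le u<v\le\sigma_L$ with $\widetilde{W}(v)-\widetilde{W}(u)=2\pi\ii$. Splitting at an intermediate time $u^*$ and invoking the strong Markov property, this is equivalent to two independent planar Brownian motions---namely $\widetilde{W}|_{[0,u^*]}$ shifted by $-2\pi\ii$ and $\widetilde{W}|_{[u^*,\sigma_L]}$---intersecting inside the strip. In $d=2$ the difference of two independent Brownian motions is itself a recurrent planar Brownian motion, so this intersection probability tends to one as the available time grows, giving the result.

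\medskip

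The most delicate step is quantifying the translated-double-point estimate while simultaneously ensuring that both the splitting time $u^*$ and the intersection time lie within $[0,\sigma_L]$, so that the resulting $s,t$ actually belong to the original excursion $[\tau^0_\epsilon,\tau^0_1]$. A purely soft appeal to ``Brownian motion has double points'' (Dvoretzky--Erd\H{o}s--Kakutani) is insufficient, since it only provides the offset $0$; the prescribed non-zero offset $2\pi\ii$ requires the finer intersection theory for two independent planar Brownian motions, combined with a careful localisation inside the tall strip. This is also the point at which the restriction to $d=2$ enters in an essential way: for $d\ge 3$ two independent Brownian motions fail to intersect with positive probability, so translated double points of this type do not exist, and the stability of the crossing property breaks down in higher dimensions.
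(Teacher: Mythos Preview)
Your conformal-invariance reduction is correct and elegant: after the $\log$ map, the enclosing event becomes exactly the existence of $u<v$ (before the strip exit time $\sigma_L$) with $\widetilde W(v)-\widetilde W(u)\in 2\pi i(\ZZ\setminus\{0\})$, and $\sigma_L$ does diverge like $L^2$. However, the final step contains a genuine error. You write that the required intersection follows because ``the difference of two independent Brownian motions is itself a recurrent planar Brownian motion.'' But neighbourhood recurrence of a planar Brownian motion does \emph{not} imply it hits a prescribed point: single points are polar in dimension two, so in fact $\PP(\exists\,t:\widetilde W^{(1)}_t=\widetilde W^{(2)}_t)=0$ for independent $\widetilde W^{(1)},\widetilde W^{(2)}$ started $2\pi$ apart. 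Path intersection at \emph{different} times, which is what you actually need, is a strictly different statement that does not follow from recurrence of the same-time difference; it requires Dvoretzky--Erd\H{o}s--Kakutani or a capacity argument for the Brownian range. Even granting that input, you have not shown how to localise the intersection before the random exit time $\sigma_L$, which you yourself flag as the delicate point.

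The paper's proof is quite different and more elementary. It first uses the strong Markov property and scale invariance to decompose the annulus $\{\delta'\le|w|\le\delta\}$ into dyadic sub-annuli, on which the enclosing events are independent and identically distributed; this reduces the lemma to showing $\PP(\mathtt{T}_c(0,B^0;\tau^0_1,\tau^0_2)<\infty)>0$. For the latter, the paper constructs by hand an explicit $C^1$ curve $\gamma:[0,3]\to\RR^2$ that winds once around the origin and then forms a small secondary loop, and proves by a direct winding-number computation that \emph{every} continuous curve in a uniform neighbourhood of $\gamma$ encloses $0$. Girsanov's theorem then gives that $B^0_{\tau^0_1+\cdot}$ lies in this neighbourhood with positive probability. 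No intersection theory is invoked. In your framework, the analogue of the paper's dyadic decomposition would be to slice the strip into unit-width vertical sub-strips and use the strong Markov property at the successive hitting times of their boundaries; this immediately gives the product structure and reduces everything to a single positive-probability estimate, circumventing the appeal to intersection theory entirely.
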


\begin{proof}
By the translation invariance of Brownian motion we may assume that $z=0$. Further, the strong Markov property and the scale invariance of Brownian motion yield
\begin{equation*}
\begin{split}
\PP(\texttt{T}_c(0,B^0;\tau^0_{\delta/2^n},\tau^0_\delta)=\infty)
&\leq \PP\bigg(\bigcap_{m=0}^{n-1} \{\texttt{T}_c(0,B^0;\tau^0_{\delta/2^{m+1}},\tau^0_{\delta/2^m})=\infty\}\!\bigg) \\
&= \PP(\texttt{T}_c(0,B^0;\tau^0_1,\tau^0_2)=\infty)^n,\;\;n\ge1. 
\end{split}
\end{equation*}
Thus, it suffices to prove that $\PP(\texttt{T}_c(0,B^0;\tau^0_1,\tau^0_2)=\infty)<1$.

\medskip

\begin{figure}
	\begin{center}
		\begin{tabular} {cc}
			{
				\includegraphics[width = 0.45\textwidth]{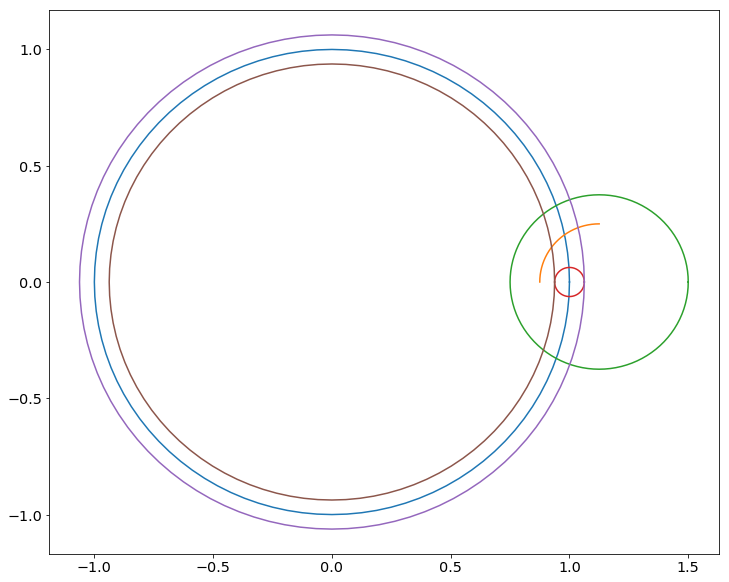}
			} & {
				\includegraphics[width = 0.45\textwidth]{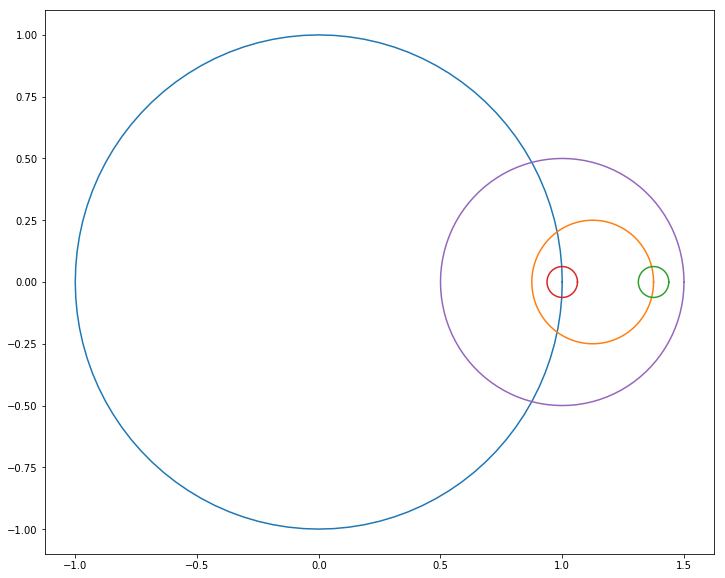}
			}\\
		\end{tabular}
		\caption{The left panel visualizes the proof of Lemma \ref{le:crossing}: $z=0$, the blue circle is $\widetilde{\gamma}_{[0,1]}$, the orange arc is $\widehat{\gamma}_{[2,3]}$, the green circle is $\partial O_1$, the red circle is $\partial O_2$, the purple circle is $\partial O_3$, and the brown circle is $\partial O_4$. The right panel visualizes the proof of Lemma \ref{le:crossing.mainProp.pf.lastLemma}: $z=0$, $B^z_{t_0}=B^z_{\tau}=(1,0)$, $B^z_{t_1}=B^z_{t_2}=(11/8,0)$, the blue circle represents $B^z_{[t_0,\tau]}$ (not actually a circle in the proof), the orange circle represents $B^z_{[t_1,t_2]}$ (not actually a circle in the proof), the green circle is $\partial O_1$, the red circle is $\partial O_2$, and the purple circle is $\partial O_3$.}
		\squeezeup		
		\label{fig:1}
	\end{center}
\end{figure}

A visualization of the remainder of the proof is given by the left panel of Figure~\ref{fig:1}. For convenience, we take $B^0_{\tau^0_1}=(1,0)\in\RR^2$. We further introduce a continuously differentiable curve $\gamma\!:[0,3]\to\RR^2$ obtained by the concatenation of $\widetilde{\gamma}\!:[0,1]\to\RR^2$ and $\widehat{\gamma}\!:[1,3]\to\RR^2$, where 
\begin{itemize}
\item $\widetilde{\gamma}(t)=(\cos(2\pi t),\sin(2\pi t))$, $t\in[0,1]$,
\item $\widehat{\gamma}_{(1,2)}\subset O_1$, with $O_1$ being the open ball of radius $3/8$ around $(9/8,0)$, 
\item $\widehat{\gamma}(t) = (9/8+\cos(\pi(t-1)/2)/4,\sin(\pi(t-1)/2)/4)$, $t\in[2,3]$.
\end{itemize}
We note that $\widehat{\gamma}(2)=(9/8,1/4)$; $\widehat{\gamma}(3)=(7/8,0)$; $\widehat{\gamma}_{[2,3]}\cap O_2=\varnothing$, where $O_2$ is the open ball of radius $1/16$ around $(1,0)$; and $\widehat{\gamma}_{[1,3]}\subset O_1$. 

\medskip

Let us show that $\gamma$ admits a neighborhood in the topology of uniform convergence such that any curve therein encloses $0$. To this end, we consider the curve $\overline{\gamma}$ given by the constant extension of $\widetilde{\gamma}$ to the time interval $[0,2]$, i.e., $\overline{\gamma}(t)=\widetilde{\gamma}(t)$, $t\in[0,1]$ and $\overline{\gamma}(t)=\widetilde{\gamma}(1)$, $t\in(1,2]$. Lemma \ref{le:crossing.Wnd.cont} yields the existence of an $\varepsilon_1>0$ such that  $\texttt{Wnd}(0,\overline{\zeta};0,2)=\texttt{Wnd}(0,\overline{\gamma};0,2)=2\pi$ for any closed continuous curve $\overline{\zeta}\!:[0,2]\to\RR^2$ with $\max_{t\in[0,2]} |\overline{\zeta}(t)-\overline{\gamma}(t)|<\varepsilon_1$. Then, for all $\varepsilon_2\in(0,\varepsilon_1\wedge(1/16))$ small enough and any continuous curve $\zeta\!:[0,3]\to\RR^2$ with $\max_{t\in[0,3]} |\zeta(t)-\gamma(t)|<\varepsilon_2$, we have: 
\begin{itemize}
\item $\zeta(2)\notin O_3$, where $O_3$ is the open ball of radius $17/16$ around $0$; 
\item $|\zeta(3)|<15/16$;
\item $\zeta_{[2,3]}\cap O_2=\varnothing$; 
\item $\zeta_{[1,3]}\subset O_1$; 
\item $\zeta(0),\zeta(1)\in O_2$;
\item $\zeta_{[0,1]}\cap O_4=\varnothing$, where $O_4$ is the open ball of radius $15/16$ around $0$;
\item $\zeta_{[1/4,1]}\cap\zeta_{[2,3]}=\varnothing$;
\item $\zeta_{[0,1/4]}$ and $\zeta_{[1,3]}$ are each contained in a cone of angle $5\pi/8$ centered at $0$.
\end{itemize}

\smallskip

For any $\zeta$ as described, we let $\overline{\zeta}\!:[0,2]\to\RR^2$ be an extension of the arc of $\zeta$ over $[0,1]$ which linearly interpolates between $\zeta(1)$ and $\zeta(0)$ on the time interval $[1,2]$. Note that $\overline{\zeta}$ is a closed continuous curve in the $\varepsilon_1$-neighborhood of $\overline{\gamma}$, thus, $\texttt{Wnd}(0,\overline{\zeta};0,2)=2\pi$, and $0$ lies in a bounded connected component of $\RR^2\setminus\overline{\zeta}_{[0,2]}$ by Corollary~\ref{cor:crossing.Wnd.vsClosing}. Moreover, the line segment connecting $0$ and $\zeta(3)$ belongs to $O_4$ and, hence, cannot intersect $\overline{\zeta}_{[0,2]}$, so that $\zeta(3)$ lies in a bounded connected component of $\RR^2\setminus\overline{\zeta}_{[0,2]}$ as well.~On the other hand, $\zeta(2)\notin O_3$ while $\overline{\zeta}_{[0,2]}\subset O_3$, which implies that $\zeta(2)$ lies in the unbounded connected component of $\RR^2\setminus\overline{\zeta}_{[0,2]}$. Thus, $\zeta_{[2,3]}$ intersects $\overline{\zeta}_{[0,2]}$. As $\zeta(0),\zeta(1)\in O_2$ and $\zeta_{[2,3]}\cap O_2=\varnothing$, we deduce that $\zeta_{[2,3]}\cap\overline{\zeta}_{[1,2]}=\varnothing$. Consequently, $\zeta_{[2,3]}$ intersects $\overline{\zeta}_{[0,1]}=\zeta_{[0,1]}$. Recalling $\zeta_{[1/4,1]}\cap\zeta_{[2,3]}=\varnothing$, we conclude that $\zeta_{[2,3]}$ intersects $\zeta_{[0,1/4]}$. We write $u_2\in[2,3]$ and $u_1\in[0,1/4]$ for two time coordinates of the aforementioned intersection point. Since $\texttt{Wnd}(0,\overline{\zeta};0,2)=2\pi$, the additivity of the winding number (see~\eqref{additivity}) and the fact that $\overline{\zeta}_{[1,2]}$ is contained in a cone of angle $\pi/2$ centered at $0$ yield $\texttt{Wnd}(0,\zeta;0,1)=\texttt{Wnd}(0,\overline{\zeta};0,1)\geq 3\pi/2$.
Finally, $\zeta_{[0,u_1]}$ and $\zeta_{[1,u_2]}$ are each contained in a cone of angle $5\pi/8$ centered at $0$, and we infer that $\texttt{Wnd}(0,\zeta;u_1,u_2)\geq \pi/4$. This proves $\texttt{T}_c(0,\zeta;0,3)<\infty$ for any continuous curve $\zeta$ in the $\varepsilon_2$-neighborhood of $\gamma$.

\medskip

More generally, for $z'\in\RR^2$ with $|z'|=1$, we rotate $\gamma$ by $\texttt{arg}(0,(1,0);z')$ to get~$\gamma^{z'}$. Then, conditionally on $B^0_{\tau^0_1}=z'$, the curve $[0,3]\to\RR^2$, $t\mapsto B^0_{\tau^0_1+t}$ falls into the $\varepsilon_2$-neighborhood of $\gamma^{z'}$ with a positive probability independent of $z'$, as can be seen from Girsanov's Theorem and the rotational invariance of Brownian motion. Hence, the conditional probability of $\{\texttt{T}_c(0,B^0;\tau^0_1,\tau^0_1+3)=\texttt{T}_c(0,B^0_{\tau^0_1+\cdot};0,3)<\infty\}\cap\{\tau_2^0>\tau^0_1+3\}$ given $B^0_{\tau^0_1}=z'$ admits a positive lower bound independent of $z'$. We conclude that $\PP(\texttt{T}_c(0,B^0;\tau^0_1,\tau^0_2)=\infty)<1$, as desired.
\end{proof}

Lemma \ref{le:crossing} has a simple but useful corollary.

\begin{corollary}\label{cor:crossing.cor1}
Fix any $z\in\RR^2$, $\delta>0$. Then, for almost every Brownian path $B^z$, there is a bounded open neighborhood $U=U(\delta,B^z)$ of $z$ such that $\partial U\subset B^z_{[0,\tau^z_\delta]}$.
\end{corollary}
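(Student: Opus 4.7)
The plan is to extract the enclosing neighborhood directly from Lemma~\ref{le:crossing} together with Remark~\ref{rem:crossing.Wnd.vsClosing}. The key observation is that the events $\{\mathtt{T}_c(z,B^z;\tau^z_{\delta'},\tau^z_\delta)<\infty\}$ are non-decreasing as $\delta'\downarrow 0$, because shrinking $\delta'$ only enlarges the time interval under consideration and therefore makes it easier for the path to close up around $z$. Combining this monotonicity with Lemma~\ref{le:crossing} gives
\begin{equation*}
\PP\Big(\bigcup_{\delta'>0}\{\mathtt{T}_c(z,B^z;\tau^z_{\delta'},\tau^z_\delta)<\infty\}\Big)=\lim_{\delta'\downarrow 0}\PP(\mathtt{T}_c(z,B^z;\tau^z_{\delta'},\tau^z_\delta)<\infty)=1,
\end{equation*}
so on a set of full measure there is a (path-dependent) $\delta'=\delta'(B^z)>0$ with $\mathtt{T}_c(z,B^z;\tau^z_{\delta'},\tau^z_\delta)<\infty$. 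Note that $z\notin B^z_{[\tau^z_{\delta'},\tau^z_\delta]}$ by definition of $\tau^z_{\delta'}$, so the winding number is well-defined on this interval.

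Fixing such a path, Remark~\ref{rem:crossing.Wnd.vsClosing} supplies times $\tau^z_{\delta'}\le s<t\le\tau^z_\delta$ with $B^z_s=B^z_t$ such that $z$ lies in a bounded connected component $U$ of $\RR^2\setminus B^z_{[s,t]}$. Since $B^z_{[s,t]}$ is the continuous image of a compact set, it is compact and hence closed, so $\RR^2\setminus B^z_{[s,t]}$ is open, and consequently $U$ is open and bounded. It remains only to verify that $\partial U\subset B^z_{[s,t]}$; this is a standard topological fact about connected components of an open set: if $y\in\partial U\setminus B^z_{[s,t]}$, then $y$ would lie in some open connected component $V$ of $\RR^2\setminus B^z_{[s,t]}$, and since $V$ contains a neighborhood of $y$ while every neighborhood of $y$ meets $U$, one would conclude $U=V$, contradicting $y\notin U$.

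Putting the pieces together, $U$ is a bounded open neighborhood of $z$ with $\partial U\subset B^z_{[s,t]}\subset B^z_{[0,\tau^z_\delta]}$, which is exactly the claim. I do not anticipate a genuine obstacle here: the entire argument is a bookkeeping exercise that converts the non-vanishing winding number provided by Lemma~\ref{le:crossing} into a topological enclosure via Corollary~\ref{cor:crossing.Wnd.vsClosing}, the only mildly delicate point being the elementary verification that the boundary of a connected component of the complement of a closed set is contained in that closed set.
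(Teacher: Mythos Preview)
Your proof is correct and follows the same route as the paper's: extract a $\delta'$ from Lemma~\ref{le:crossing} and invoke Remark~\ref{rem:crossing.Wnd.vsClosing} to produce the enclosing neighborhood. One small slip: $z\notin B^z_{[\tau^z_{\delta'},\tau^z_\delta]}$ is not guaranteed by the definition of $\tau^z_{\delta'}$ (the path may well return arbitrarily close to $z$ after that time) but rather by the almost sure fact that planar Brownian motion never revisits its starting point.
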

\begin{proof}
Lemma \ref{le:crossing} implies that, for almost every Brownian path $B^z$, there exists a $\delta'\in(0,\delta)$ for which $\texttt{T}_c(z,B^z;\tau^z_{\delta'},\tau^z_{\delta})<\infty$.
Remark \ref{rem:crossing.Wnd.vsClosing} then yields the existence of a bounded open neighborhood $U$ of $z$ such that $\partial U\subset B^z_{[\tau^z_{\delta'},\tau^z_\delta]}\subset B^z_{[0,\tau^z_\delta]}$.
\end{proof}

The fact that each point of any $\Gamma^N_t$ can be connected to some other point in $\Gamma^N_t$ by a continuous curve, the strong Markov property of Brownian motion and Corollary \ref{cor:crossing.cor1} reveal that almost every path of a planar Brownian motion intersects $\Gamma^N$ infinitely often in each open right neighborhood of its first hitting time. The proof of Corollary~\ref{cor:stabCrossing} shows how to obtain the same conclusion with $\Gamma$ in place of $\Gamma^N$. We therefore refer to Corollary \ref{cor:crossing.cor1} as the crossing property of planar Brownian motion, in analogy to the terminology of \cite[Lemma 5.6]{DIRT2} in the one-dimensional setting. To prove Theorem~\ref{thm:char}, we need to additionally verify that the crossing property is stable, i.e., that it holds for any continuous path close to a Brownian path, as captured by the following proposition.  

\begin{proposition}\label{prop:stabCrossing}
Fix any $z\in\RR^2$, $\delta>0$. Then, for almost every Brownian path~$B^z$, there is an $\varepsilon\!=\!\varepsilon(\delta,B^z)\!>\!0$ such that, for all $b\!\in\! C([0,\tau^z_\delta],\RR^2)$ with $\max_{[0,\tau^z_\delta]} |b-B^z|\!<\!\varepsilon$, there exists a bounded open neighborhood $V=V(\delta,B^z,b)$ of $z$ such that $\partial V\subset b_{[0,\tau^z_\delta]}$. 
\end{proposition}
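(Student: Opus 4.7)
The plan is to obtain Proposition \ref{prop:stabCrossing} by quantifying and iterating the construction in the proof of Lemma \ref{le:crossing}. A close reading of that proof reveals the following stability statement at the unit scale. There exist absolute constants $p_\star>0$ and $\varepsilon_\star\in(0,1)$ such that, for every $z\in\RR^2$ and every $r>0$, conditional on $\mathcal{F}_{\tau^z_r}$, the probability of the event $A_r$ is at least $p_\star$, where $A_r$ is the event that $B^z$, on a deterministic time window of length $3r^2$ after $\tau^z_r$, remains within uniform distance $\varepsilon_\star r/2$ of a specific rescaled and rotated smooth model curve $\gamma^{z',r}$ (with $z'=B^z_{\tau^z_r}-z$), that winds once around $z$, and in addition $\tau^z_{2r}$ has not yet occurred. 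Crucially, the argument of Lemma \ref{le:crossing} only uses uniform proximity to $\gamma^{z',r}$, so that on $A_r$ \emph{every} continuous curve $\zeta$ with $\max|\zeta-\gamma^{z',r}|<\varepsilon_\star r$ satisfies $\texttt{T}_c(z,\zeta;\tau^z_r,\tau^z_r+3r^2)<\infty$; by Remark \ref{rem:crossing.Wnd.vsClosing}, such $\zeta$ then encloses $z$ by the boundary of a bounded open set $V\subset\RR^2$.

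Next, I apply this stable version at the nested dyadic scales $r_m:=\delta/2^m$, $m=0,1,2,\ldots$. Brownian motion started from $z$ successively crosses the concentric annuli $\{r_{m+1}\le|\cdot-z|\le r_m\}$ between the hitting times $\tau^z_{r_{m+1}}<\tau^z_{r_m}$, all of which occur before $\tau^z_\delta$. The strong Markov property at the times $\tau^z_{r_m}$ makes the events $(A_{r_m})_{m\ge0}$ independent, and by Brownian scaling each has probability at least $p_\star$. The second Borel--Cantelli lemma yields that almost surely $A_{r_m}$ holds for infinitely many $m$. Letting $M=M(B^z)<\infty$ be the smallest such index, I set $\varepsilon:=\varepsilon_\star\delta/2^{M+1}$. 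For any $b\in C([0,\tau^z_\delta],\RR^2)$ with $\max_{[0,\tau^z_\delta]}|b-B^z|<\varepsilon$, the triangle inequality on the $M$-th crossing window gives
\[
\max_{[\tau^z_{r_M},\tau^z_{r_M}+3r_M^2]}\bigl|b-\gamma^{z',r_M}\bigr|
\le \varepsilon + \varepsilon_\star r_M/2 \le \varepsilon_\star r_M.
\]
By the stability statement of the first step, the corresponding arc of $b$ traces the boundary of a bounded open neighborhood $V$ of $z$, and hence $\partial V\subset b_{[0,\tau^z_\delta]}$.

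The main obstacle is the first step, namely extracting the quantitative stability statement cleanly from the proof of Lemma \ref{le:crossing}, together with the two uniformities needed for the dyadic iteration. Specifically, one must verify that (i) the geometric tolerance $\varepsilon_2$ appearing in that proof depends only on the fixed model curve $\gamma$ (not on the rotation $z'$ prescribed by $B^z_{\tau^z_r}$), and (ii) the Girsanov lower bound on the tube probability around $\gamma^{z',r}$ holds uniformly in both $z'\in\partial O(r)$ and $r>0$. Both uniformities are essentially already noted in the proof of Lemma \ref{le:crossing} (via rotational invariance of Brownian motion and Brownian scaling, respectively), but they must be made explicit so that the constants $(p_\star,\varepsilon_\star)$ are genuinely absolute. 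Once these quantitative inputs are secured, the remainder is a routine Borel--Cantelli argument combined with a triangle-inequality estimate.
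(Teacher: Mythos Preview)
Your approach is correct and takes a genuinely different, more direct route than the paper's. The key observation---that the proof of Lemma \ref{le:crossing} already establishes a \emph{stable} enclosure (every continuous curve in a uniform tube around the model curve $\gamma^{z',r}$ encloses $z$)---is exactly right, and iterating it over dyadic scales is both valid and economical. Two small corrections: restrict to $m\ge 1$, since $A_{r_0}$ concerns the window $[\tau^z_\delta,\tau^z_\delta+3\delta^2]$ lying beyond $\tau^z_\delta$; and the independence of the $A_{r_m}$, while true, deserves one line of justification: on $A_{r_m}$ the path stays inside radius $2r_m=r_{m-1}$ throughout the window, so $A_{r_m}\in\mathcal{F}_{\tau^z_{r_{m-1}}}$, while $\PP(A_{r_m}\mid\mathcal{F}_{\tau^z_{r_m}})$ is a constant by strong Markov, scaling, and rotational invariance. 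You also only need that \emph{some} $A_{r_m}$ holds almost surely; the bound $\PP\big(\bigcap_{m=1}^n A_{r_m}^c\big)\le(1-p_\star)^n\to 0$ already suffices without invoking infinitely many occurrences.

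The paper proceeds quite differently. Rather than placing $B^z$ in a deterministic tube at some scale, it works with the path's own self-intersection structure: first locate $t_0<\tau$ with $B^z_{t_0}=B^z_\tau$ and nonzero winding around $z$ (the crossing property of Lemma \ref{le:crossing}); then apply the crossing property a \emph{second} time, now around the point $B^z_\tau$, to produce a later loop $B^z_{[t_1,t_2]}$ with nonzero winding around $B^z_\tau$; finally, an auxiliary intersection result (Lemma \ref{le:crossing.mainProp.pf.lastLemma}) shows that any $b$ uniformly close to $B^z$ must have $b_{[t_0,s_0]}\cap b_{[t_1,t_2]}\neq\varnothing$, and a closed sub-arc of $b$ with nonzero winding around $z$ is assembled via additivity of the winding number. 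Your argument is considerably shorter and sidesteps Lemma \ref{le:crossing.mainProp.pf.lastLemma} entirely. The paper's argument, in turn, is more intrinsic to the realized path---it never requires $B^z$ to lie near any deterministic curve, only to exhibit its almost-sure self-intersection behavior---and the resulting $\varepsilon$ is read off from the geometry of those self-intersections rather than from a tube event; this makes the paper's route somewhat more natural under reparametrizations, which is relevant for the time-changed extension of Remark \ref{rmk:relax.Ass1}.
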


\begin{proof}
By Remark \ref{rem:crossing.Wnd.vsClosing}, it is enough to establish the subsequent claim.

\medskip

\noindent\textbf{Claim.} For almost every Brownian path $B^z$, there is an $\varepsilon=\varepsilon(\delta,B^z)>0$ such that, for all $b\in C([0,\tau^z_\delta],\RR^2)$ with $\max_{t\in[0,\tau^z_\delta]} |b(t)-B^z_t|<\varepsilon$, there exists a $\delta'\in(0,\delta)$ such that $\texttt{T}_c(z,b;\tau^z_{\delta'},\tau^z_{\delta})<\infty$.

\medskip

\noindent\textbf{Step 1.} For $\delta_0\in(0,\delta)$, let 
$$
E_{\delta_0}:= \{\texttt{T}_c(z,B^z;\tau^z_{\delta_0},\tau^z_{\delta})<\infty\}
\cap\{z\notin B^z_{(0,\tau^z_\delta]}\}.
$$
The non-increasing family $\{E_{\delta_0}\}_{\delta_0\in(0,\delta)}$ tends to an event of probability one as $\delta_0\downarrow0$, thanks to Lemma \ref{le:crossing}. Thus, it suffices to prove the claim on $E_{\delta_0}$, for an arbitrary fixed $\delta_0\in(0,\delta)$. To ease the notation, we set
$$
\tau=\lim_{n\rightarrow\infty}\texttt{T}_c(z,B^z;\tau^z_{\delta_0},n).
$$
The latter limit is well-defined and almost surely finite, due to the scale invariance of Brownian motion and Lemma \ref{le:crossing}. We also remark that $\tau^z_{\delta_0}<\tau<\tau^z_\delta$ on $E_{\delta_0}$. (Observe that, by definition, $\tau^z_\delta$ cannot be a self-intersection time of a Brownian path.) Further, we decrease $E_{\delta_0}$ by a zero probability event according to 
$$
E'_{\delta_0}:=E_{\delta_0}\cap\{B^z_\tau\notin B^z_{(\tau,\infty)}\}.
$$

\smallskip

Next, we take a $\delta_1>0$, expand the underlying probability space so it supports a uniform $(0,1)$-valued random variable $\eta$ independent of $B^z$, and let
\begin{align*}
&\sigma_1:=\inf\{t\geq0:\,|B^z_{\tau+t}-B^z_\tau|\geq \delta_1\eta\},\\
&\iota:=\lim_{n\rightarrow\infty} \texttt{T}_c(B^z_{\tau},B^z_{\tau+\cdot};\sigma_1,n).
\end{align*}
The strong Markov property of Brownian motion, its scale invariance and Lemma \ref{le:crossing} show that $\iota$ is well-defined and almost surely finite. 

\medskip

We claim that, almost surely, the conditional distribution of $B^z_{\tau+\iota}$ given $B^z_{[0,\tau]}$ is absolutely continuous with respect to the Lebesgue measure. The latter follows from the strong Markov property of Brownian motion and the observation that, for any $z'\in\RR^2$ and with $B^{z'}:=B^z-z+z'$, the pairs 
$$
\big(\eta,\widehat{B}^{z'} := z'+\eta(B^{z'}_{\cdot/\eta^2}-z')\big)\quad\text{and}\quad
(\eta,B^{z'})
$$
have the same distribution.
Indeed, the conditional distribution of $(B^z_{\tau+\cdot},\iota)$ given $B^z_{[0,\tau]}$ almost surely coincides with the distribution of $(\widehat{B}^{z'},\widehat{\iota}^{z'})$ evaluated at $z'=B^z_\tau$, where
\begin{align*}
&\widehat{\iota}^{z'}
:=\lim_{n\rightarrow\infty} \texttt{T}_c(z',\widehat{B}^{z'};\widehat{\sigma}^{z'}_1,n) 
=\lim_{n\rightarrow\infty} \texttt{T}_c(z',B^{z'}_{\cdot/\eta^2};\widehat{\sigma}^{z'}_1,n)
= \eta^2 \lim_{n\rightarrow\infty} \texttt{T}_c(z',B^{z'};\widehat{\sigma}^{z'}_1/\eta^2,n),\\
&\widehat{\sigma}_1^{z'}:=\inf\{t\geq0:\, |\widehat{B}^{z'}_{t}-z'|\geq \delta_1\eta\}
= \inf\{t\geq0:\,|B^{z'}_{t/\eta^2}-z'|\geq \delta_1\}\\
&\qquad\qquad\qquad\qquad\qquad\qquad\qquad\quad\, = \eta^2\inf\{t\geq0:\,|B^{z'}_{t}-z'|\geq \delta_1\}.
\end{align*}
These imply $\widehat{\iota}^{z'} = \eta^2\widehat{\tau}^{z'}$, with
\begin{align*}
\widehat\tau^{z'}:=\lim_{n\rightarrow\infty} \texttt{T}_c(z',B^{z'};\widetilde{\sigma}^{z'}_1,n)\quad\text{and}\quad
\widetilde{\sigma}^{z'}_1:=\inf\{t\geq0:\,|B^{z'}_{t}-z'|\geq\delta_1\}.
\end{align*}
Thus, the conditional distribution of $B^z_{\tau+\iota}$ given $B^z_{[0,\tau]}$ almost surely coincides with the distribution of $z'+\eta(B^{z'}_{\widehat{\tau}^{z'}}-z')$ evaluated at $z'=B^z_\tau$.
It is only left to notice that the distribution of $B^{z'}_{\widehat{\tau}^{z'}}$ is invariant with respect to rotations around $z'$ and that $\eta$ is independent of $B^{z'}_{\widehat{\tau}^{z'}}$, to conclude that, almost surely, the conditional distribution of $B^z_{\tau+\iota}$ given $B^z_{[0,\tau]}$ is absolutely continuous with respect to the Lebesgue measure. 

\medskip

\noindent \textbf{Step 2.} We proceed by letting
\begin{eqnarray*}
&& \sigma_2:=\inf\big\{t\geq0:\,|B^z_{\tau+t}-B^z_\tau|\geq \big(|B^z_\tau-z|\wedge(\delta-|B^z_\tau-z|)\big)/2\big\}, \\
&& E_{\delta_0,\delta_1}:=E'_{\delta_0}\cap\{\iota<\sigma_2\}. 
\end{eqnarray*}
Using the strong Markov property of Brownian motion and Lemma \ref{le:crossing} we deduce that the non-increasing family $\{E_{\delta_0,\delta_1}\}_{\delta_1>0}$ tends to $E'_{\delta_0}$ as $\delta_1\downarrow0$, up to a zero probability event. 
It follows that
$$
\PP\bigg(\bigcup_{n=1}^\infty\bigcup_{m=1}^\infty E_{1/n,1/m}\bigg)=1.
$$
Recall also that, almost surely, the set $B^z_{[0,\tau]}$ has zero Lebesgue measure. Combining this with the conclusion of Step 1 we infer that, almost surely, $B^z_{\tau+\iota} \notin B^z_{[0,\tau]}$. Putting these together with the observation made after Definition \ref{def:crossing.FirstEnclosingTime}, we see that, almost surely, there exist $\delta_0,\delta_1>0$ such that $\tau^z_{\delta_0}<\tau<\tau+\sigma_2<\tau^z_\delta$ and $\iota<\sigma_2$, as well as $t_0,s_0,t_1,t_2$  satisfying $\tau_{\delta_0}^z<t_0<s_0<\tau<t_1<t_2<\tau+\sigma_2$ and
\begin{eqnarray}
&&\quad\;\; B^z_{t_0}=B^z_\tau,\quad\texttt{Wnd}(z,B^z;t_0,\tau) \neq 0, 
\label{eq.crossing.mainProp.Pf.finalPropOfB.eq1} \\
&&\quad\;\; |B^z_{t_0}-z|/2<|B^z_{s_0}-B^z_{t_0}|< \frac{7}{8}|B^z_{t_0}-z|,\quad
|B^z_t-B^z_{t_0}|<\frac{7}{8}|B^z_{t_0} - z|,\;\; t\in[t_0,s_0],
\label{eq.crossing.mainProp.Pf.finalPropOfB.eq2} \\
&&\quad\;\; B^z_{t_1}=B^z_{t_2},\quad \texttt{Wnd}(B^z_\tau,B^z;t_1,t_2) \neq 0, 
\label{eq.crossing.mainProp.Pf.finalPropOfB.eq3} \\
&&\quad\;\; |B^z_t-B^z_\tau|<|B^z_\tau - z|/2,\;\; t\in[\tau,t_2],\label{eq.crossing.mainProp.Pf.finalPropOfB.eq4} \\
&&\quad\;\; B^z_\tau\notin B^z_{[t_1,t_2]},\quad B^z_{t_2}\notin B^z_{[0,\tau]}.\label{eq.crossing.mainProp.Pf.finalPropOfB.eq5}
\end{eqnarray}
Indeed, a $t_0\in(\tau^z_{\delta_0},\tau)$ fulfilling \eqref{eq.crossing.mainProp.Pf.finalPropOfB.eq1} exists by the definition of $\tau$. The time $s_0\in(t_0,\tau)$ can be chosen as $\inf\{t>t_0\!:|B^z_t-B^z_{t_0}|=3|B^z_{t_0}-z|/4\}$, which is strictly less than~$\tau$ because $B^z_{[t_0,s_0]}$ is then contained in a cone of angle $\pi$ centered at $z$. The time $t_2$ can be taken as $\tau+\iota$; and $t_1\in(\tau,t_2)$ satisfying \eqref{eq.crossing.mainProp.Pf.finalPropOfB.eq3} then exists by the definition of $\iota$. In the described context, we proceed to the following lemma. 

\begin{lemma}\label{le:crossing.mainProp.pf.lastLemma}
There exists an $\varepsilon_1>0$ such that, for any continuous $b\!:[\tau^z_{\delta_0},\tau^z_\delta]\to\RR^2$ with $\max_{t\in[\tau^z_{\delta_0},\tau^z_\delta]}|b(t)-B^z_t|<\varepsilon_1$, it holds $b_{[t_0,s_0]}\cap b_{[t_1,t_2]}\neq\varnothing$.
\end{lemma}

Let us show how Lemma \ref{le:crossing.mainProp.pf.lastLemma} yields the claim at the beginning of the proof. Denote by $\gamma$ the extension of $B^z$ from $[t_0,\tau]$ to $[t_0,\tau+1]$ defined by $\gamma(t)=B^z_\tau$, $t\in(\tau,\tau+1]$. Then, $\gamma$ is a closed continuous curve and $\texttt{Wnd}(z,\gamma;t_0,\tau+1) =\texttt{Wnd}(z,B^z;t_0,\tau)\neq0$. By Lemma \ref{le:crossing.Wnd.cont}, there exists an $\varepsilon_2>0$ such that $\texttt{Wnd}(z,\widetilde{\gamma};t_0,\tau+1) \neq 0$ for any closed continuous curve $\widetilde{\gamma}$ in the $\varepsilon_2$-neighborhood of $\gamma$. Fixing such an $\varepsilon_2>0$, for any continuous $b\!:[\tau^z_{\delta_0},\tau^z_\delta]\to\RR^2$ with $\max_{t\in[\tau^z_{\delta_0},\tau^z_\delta]} |b(t)-B^z_t|<\varepsilon_1\wedge\varepsilon_2\wedge (|B^z_{t_0}-z|/8)$, we extend $b$ from $[t_0,\tau]$ to $[t_0,\tau+1]$ by linearly interpolating between $b(\tau)$ and $b(t_0)$ on $[\tau,\tau+1]$. We write $\widetilde{b}$ for the resulting extension. Then, $\widetilde{b}$ is a closed continuous curve which belongs to the $\varepsilon_2$-neighborhood of $\gamma$ and, hence, $\texttt{Wnd}(z,\widetilde{b};t_0,\tau+1) \neq 0$. Since the winding number is additive (see \eqref{additivity}) and $\widetilde{b}_{[\tau,\tau+1]}$ is contained in a cone of angle $\pi/2$ centered at $z$, we conclude that $\texttt{Wnd}(z,b;t_0,\tau) \notin [-3\pi/2,3\pi/2]$. Next, we apply Lemma \ref{le:crossing.mainProp.pf.lastLemma} and denote by $u_1\in[t_0,s_0]$ and $u_2\in[t_1,t_2]$ two time coordinates of the intersection point of $b_{[t_0,s_0]}$ and $b_{[t_1,t_2]}$.
Using the additivity of the winding number again and that $b_{[t_0,u_1]}$, $b_{[\tau,u_2]}$ are each contained in a cone of angle $\pi$ centered at $z$ (recall \eqref{eq.crossing.mainProp.Pf.finalPropOfB.eq2}, \eqref{eq.crossing.mainProp.Pf.finalPropOfB.eq4}), we infer that $\texttt{Wnd}(z,b;u_1,u_2) \notin [-\pi/2,\pi/2]$. As the arc of $b$ over $[u_1,u_2]$ is closed, $\texttt{Wnd}(z,b;u_1,u_2) \neq0$ shows the claim at the beginning of the proof.
\end{proof}

\noindent\textit{Proof of Lemma \ref{le:crossing.mainProp.pf.lastLemma}.} The idea of this proof is somewhat similar to that of the proof of Lemma \ref{le:crossing}. For a visualization, please refer to the right panel of Figure \ref{fig:1}. Relying on \eqref{eq.crossing.mainProp.Pf.finalPropOfB.eq5}, we pick a sufficiently small $r>0$ such that the distance between $B^z_{[t_0,s_0]}$ and the open ball $O_1$ of radius $r$ around $B^z_{t_2}=B^z_{t_1}$ is greater than $r$.
Decreasing $r>0$ if necessary, we also ensure that the distance between $B^z_{[t_1,t_2]}$ and the open ball $O_2$ of radius $r$ around $B^z_{\tau}=B^z_{t_0}$ is greater than $r$. In addition, notice that the arc of $B^z$ over the time interval $[t_1,t_2]$ is closed and that $B^z_{[t_1,t_2]}$ is contained in $O_3$, the open ball of radius $|B^z_\tau - z|/2$ around $B^z_\tau$. Let $\varepsilon_3>0$ be the distance between $B^z_{[t_1,t_2]}$ and $\partial O_3$. Further, $B^z_{s_0}\notin O_3\cup\partial O_3$ by \eqref{eq.crossing.mainProp.Pf.finalPropOfB.eq2}. Put $\varepsilon_4>0$ for the distance between $B^z_{s_0}$ and $\partial O_3$. 

\medskip

Write $\gamma$ for the extension of $B^z$ from $[t_1,t_2]$ to $[t_1,t_2+1]$ defined by $\gamma(t)=B^z_{t_2}$, $t\in(t_2,t_2+1]$. Then, $\gamma$ is a closed continuous curve with $\texttt{Wnd}(B^z_\tau,\gamma;t_1,t_2+1) =\texttt{Wnd}(B^z_\tau,B^z;t_1,t_2) \neq 0$. Thus, Lemma \ref{le:crossing.Wnd.cont} yields the existence of an $\varepsilon_5>0$ such that $\texttt{Wnd}(B^z_\tau,\widetilde{\gamma};t_1,t_2+1) \neq 0$ for any closed continuous curve $\widetilde{\gamma}$ in the $\varepsilon_5$-neighborhood of~$\gamma$.
Choosing $\varepsilon_1=r\wedge\varepsilon_3\wedge\varepsilon_4\wedge\varepsilon_5>0$, we have for any continuous $b\!:[\tau^z_{\delta_0},\tau^z_\delta]\to\RR^2$ which belongs to the $\varepsilon_1$-neighborhood of $B^z_{[\tau^z_{\delta_0},\tau^z_\delta]}$,
\begin{itemize}
\item $b(t_1),b(t_2)\in O_1$ and $b_{[t_0,s_0]}\cap O_1=\varnothing$, 
\item $b(t_0),b(\tau) \in O_2$ and $b_{[t_1,t_2]}\cap O_2=\varnothing$, 
\item $b_{[t_1,t_2]}\subset O_3$ and $b(s_0)\notin O_3$.
\end{itemize}

\smallskip

Finally, we introduce the extension $\widetilde{b}$ of the arc of $b$ over the time interval $[t_1,t_2]$ by a linear interpolation between $b(t_2)$ and $b(t_1)$ on $[t_2,t_2+1]$. Then, $\widetilde{b}$ is a closed continuous curve in the $\varepsilon_5$-neighborhood of $\gamma$ and, hence, $\texttt{Wnd}(B^z_\tau,\widetilde{b};t_1,t_2+1) \neq 0$. It follows from Corollary \ref{cor:crossing.Wnd.vsClosing} that $B^z_\tau$ is contained in a bounded connected component of $\RR^2\setminus \widetilde{b}_{[t_1,t_2+1]}$.~In addition, the line segment $\ell$ connecting $B^z_\tau$ and $b(t_0)$ is contained in~$O_2$, which shows that $\ell$ does not intersect $\widetilde{b}_{[t_1,t_2]}=b_{[t_1,t_2]}$. Since $\widetilde{b}_{[t_2,t_2+1]}\subset O_1$ and $O_1\cap O_2=\varnothing$, the segment $\ell$ does not intersect $\widetilde{b}_{[t_2,t_2+1]}$ either, so that $b(t_0)$ is contained in a bounded connected component of $\RR^2\setminus \widetilde{b}_{[t_1,t_2+1]}$ as well. Thus, $\widetilde{b}_{[t_1,t_2+1]}\subset O_3$ and $b(s_0)\notin O_3$ reveal that $b(t_0)$ and $b(s_0)$ belong to different connected components of $\RR^2\setminus \widetilde{b}_{[t_1,t_2+1]}$. Therefore, $b_{[t_0,s_0]}$ and $\widetilde{b}_{[t_1,t_2+1]}$ admit an intersection point. It only remains to observe that this intersection point cannot lie on $\widetilde{b}_{[t_2,t_2+1]}$, as $\widetilde{b}_{[t_2,t_2+1]}\subset O_1$ and $b_{[t_0,s_0]}\cap O_1=\varnothing$. We infer that $\varnothing\neq b_{[t_0,s_0]}\cap\widetilde{b}_{[t_1,t_2]}=b_{[t_0,s_0]}\cap b_{[t_1,t_2]}$, as desired. \qed


\medskip

Applying Proposition \ref{prop:stabCrossing} in the setting of Theorem \ref{thm:char}, we obtain the following corollary, which is used in the proof of Theorem \ref{thm:char}. 

\begin{corollary}\label{cor:stabCrossing}
For $d\in\{1,2\}$ and under Assumption \ref{ass:1}, let $(\mu^X,D)$ be a limit point in distribution of a sequence of MDLA processes (cf.~Proposition \ref{prop:existence}), constructed on a probability space $(\Omega,\mathcal{F},\PP)$. Then, for $\PP$-a.e. $\omega\in\Omega$, the set
\begin{equation*}
\begin{split}
&\big\{b\in C([0,T+1],\RR^d):\,\forall\,\delta>0\;\exists\,\varepsilon>0\;\forall\,b'\in C([0,T+1],\RR^d)\,\,\max_{[0,T+1]} \!|b'\!-\!b|\!<\!\varepsilon \\ 
&\Longrightarrow\,
\inf\{t\!\in\![0,T\!+\!1]\!:D_t(\omega;b'(t))\!=\!0\}
\leq\delta+\inf\{t\!\in\![0,T]\!:D_t(\omega;b(t))\!=\!0\}\!\big\},
\end{split}
\end{equation*}
with the convention $\inf\varnothing=\infty$, has probability one under $\mu^{\wt X}$, where the latter measure is defined in Assumption \ref{ass:1}.
\end{corollary}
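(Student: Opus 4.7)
My plan is to combine Proposition~\ref{prop:stabCrossing} (whose one-dimensional analogue is elementary) with the strong Markov property and a mild connectedness property of the limiting aggregate. By Assumption~\ref{ass:1}, for $\PP$-a.e.~$\omega$ the measure $\mu^{\wt X}(\omega)$ is the law of $\xi+B$ on some auxiliary space $(\widehat\Omega,\widehat\PP)$; fix such an $\omega$, write $b=\xi+B$, and set $\tau(\widehat\omega):=\inf\{t\in[0,T]:D_t(\omega;b(t))=0\}$. It suffices to show that for $\widehat\PP$-a.e.~$\widehat\omega$, the path $b$ lies in the stable set of the statement. On $\{\tau=\infty\}$ the required inequality holds vacuously via the convention $\delta+\infty=\infty$, so I focus on $\{\tau<\infty\}$ and write $z=b(\tau)\in\Gamma_\tau(\omega)$.

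By the strong Markov property, $b(\tau+\cdot)-z$ is a standard Brownian motion, so Fubini together with Proposition~\ref{prop:stabCrossing} (or its elementary $d=1$ analogue) gives, for $\widehat\PP$-a.e.~$\widehat\omega$ on $\{\tau<\infty\}$ and every $\delta_0>0$, an $\varepsilon_0=\varepsilon_0(\delta_0,\widehat\omega)>0$ such that any $b^*\in C([0,\tau^z_{\delta_0}],\rr^d)$ within $\varepsilon_0$ of $b(\tau+\cdot)$ satisfies $\partial V\subset b^*_{[0,\tau^z_{\delta_0}]}$ for some bounded open neighborhood $V$ of $z$. Because $b(\tau+\cdot)$ stays in the closed disk of radius $\delta_0$ about $z$ throughout $[0,\tau^z_{\delta_0}]$ and $V$ is bounded, $V$ itself must lie in this disk (any point of $V$ outside it could be joined to infinity without crossing $\partial V$, contradicting the boundedness of $V$), whence $\mathrm{diam}(V)\le 2\delta_0$. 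Given $\delta>0$ from the definition of the stable set, I choose $\delta_0$ small enough that $\tau^z_{\delta_0}<\delta$ and $2\delta_0<c/2$, where $c>0$ is the uniform lower bound on diameters of connected components of the $\Gamma^N_{0-}$ supplied by Assumption~\ref{ass:1}.

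The crucial geometric input is that the connected component $C$ of $\Gamma_\tau(\omega)$ containing $z$ satisfies $\mathrm{diam}(C)\ge c$. Since hypercubes are added to $\Gamma^N_t$ only by attachment to the existing aggregate, no new components form during growth, so each component of $\Gamma^N_t$ contains a component of $\Gamma^N_{0-}$ and has diameter $\ge c$, uniformly in $N$ and $t\le T$. Passing to the limit via the Painlev\'e--Kuratowski convergence of $\Gamma^N$ to $\Gamma(\omega)$ at continuity times of $D(\omega)$ (cf.~Remark~\ref{rem:discont.Dbar}), the right-continuity of $\Gamma(\omega)$, and the classical fact that Hausdorff-subsequential limits of nonempty compact connected sets are nonempty compact connected, yields a compact connected subset $C\subset\Gamma_\tau(\omega)$ of diameter $\ge c$ through $z$.

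Now fix $b'\in C([0,T+1],\rr^d)$ with $\max_{[0,T+1]}|b'-b|<\varepsilon_0$. Then $b'(\tau+\cdot)|_{[0,\tau^z_{\delta_0}]}$ is within $\varepsilon_0$ of $b(\tau+\cdot)$, so $\partial V\subset b'_{[\tau,\tau+\tau^z_{\delta_0}]}$. Since $z\in V\cap C$ and $\mathrm{diam}(V)<c/2\le \mathrm{diam}(C)/2$, the connected set $C$ cannot be contained in $\overline{V}$ and must cross $\partial V$. Hence there exists $t^*\in[\tau,\tau+\tau^z_{\delta_0}]\subset[\tau,\tau+\delta]$ with $b'(t^*)\in C\subset\Gamma_\tau(\omega)\subset\Gamma_{t^*}(\omega)$, i.e., $D_{t^*}(\omega;b'(t^*))=0$, which yields $\inf\{t\in[0,T+1]:D_t(\omega;b'(t))=0\}\le\tau+\delta$ as required. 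The main technical obstacle will be the limit passage of the component-diameter bound from $\Gamma^N$ to $\Gamma(\omega)$ in the third paragraph, which must handle the possible discontinuity of $D(\omega)$ at the random time $\tau$ and extract Hausdorff-convergent subsequences of components through approximants $z_N\to z$ taken at continuity times of $D$; once this geometric picture is in place, the application of Proposition~\ref{prop:stabCrossing} is direct.
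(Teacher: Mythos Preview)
Your approach is correct and is a genuine alternative to the paper's proof. Both arguments hinge on the same two ingredients: the stability of the crossing property (Proposition~\ref{prop:stabCrossing}) applied after the strong Markov shift to time $\tau$, and the uniform lower bound $c>0$ on the diameter of connected components of $\Gamma^N_t$. The difference lies in \emph{where} the connectedness is used. You first pass to the limit, establishing that $\Gamma_\tau(\omega)$ itself contains a compact connected set $C\ni z$ of diameter $\ge c$ (via Hausdorff limits of connected pieces of $\Gamma^N_s$ at continuity times $s\downarrow\tau$, combined with the right-continuity $\Gamma_\tau=\bigcap_{s>\tau}\Gamma_s$), and then intersect $C$ with $\partial V$ for every admissible $b'$. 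The paper instead fixes $b'$ and works at the pre-limit level: it picks a continuity time $\tau+\delta_0$, uses $D^N_{\tau+\delta_0}(b(\tau))\to 0$ to get $V\cap\Gamma^N_{\tau+\delta_0}\neq\varnothing$, exploits the connectedness of $\Gamma^N_{\tau+\delta_0}$ to produce a single point $x_N\in\partial V\cap\Gamma^N_{\tau+\delta_0}$, and only then takes a one-point limit $x_\infty\in\partial V$ with $D_{\tau+\delta_0}(x_\infty)=0$. To make this work the paper needs the refinement $\partial V\subset b'_{[\tau+\delta_0,\tau+\delta]}$ from the \emph{proof} of Proposition~\ref{prop:stabCrossing}, which you avoid.

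Your route yields a cleaner structural statement about the limiting aggregate (large connected components persist), at the cost of the two-step Hausdorff extraction you flag as the main obstacle; the paper's route is more economical because only a one-point limit is needed. Two small corrections: $\mu^{\wt X}$ is deterministic under Assumption~\ref{ass:1}, so ``$\mu^{\wt X}(\omega)$'' should just be $\mu^{\wt X}$; and since $\partial V\subset b^*_{[0,\tau^z_{\delta_0}]}$ with $b^*$ only $\varepsilon_0$-close to $b(\tau+\cdot)$, your diameter bound should read $\mathrm{diam}(V)\le 2(\delta_0+\varepsilon_0)$ rather than $2\delta_0$---harmless once you also take $\varepsilon_0<\delta_0$.
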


\begin{proof}
By conditioning on $\xi=z$ for $z\in\RR^2$ and subsequently shifting the coordinate system, the corollary can be reduced to the case $\xi\equiv0$, so we only consider that case. For $d=1$, the corollary results from $D_\tau(b(\tau))=0$ for $\tau:=\inf\{t\in[0,T]\!:D_t(b(t))\!=\!0\}$, the strong Markov property of Brownian motion and $\max_{[0,\delta]} b>0$, $\min_{[0,\delta]} b<0$ for all $\delta>0$ and almost every standard Brownian path $b$. From here on, we take $d=2$. In view of the Skorokhod Representation Theorem, we may pick a sequence of MDLA processes, which we denote by $(\mu^{X,N},D^N)_{N\in\NN}$ for convenience, that converges to the limit $(\mu^X,D)$ almost surely. Moreover, if $\inf\{t\in[0,T]:\,D_t(b(t))=0\}=\infty$, then the desired inequality holds for all $b'$ automatically, and we only need to analyze the paths $b$ with $\min_{t\in[0,T]} D_t(b(t)) = 0$. The strong Markov property of Brownian motion and Proposition \ref{prop:stabCrossing} imply that, for almost every standard Brownian path $b$ on $[0,T+1]$ satisfying $\min_{t\in[0,T]} D_t(b(t)) = 0$ and all $\delta>0$, there exists an $\varepsilon=\varepsilon(\delta,b)>0$ such that for all $b'\in C([0,T+1],\RR^2)$ with $\max_{[0,T+1]} |b'-b|<\varepsilon$, there exists a bounded open neighborhood $V=V(\delta,b,b')$ of $b(\tau)$ fulfilling
\begin{equation}
\partial V\subset b_{[\tau,\tau+\delta]}',\quad\text{where}\quad\tau:=\inf\{t\in[0,T]:\,D_t(b(t))=0\}\leq T.
\end{equation}
The main claim in the proof of Proposition \ref{prop:stabCrossing} shows that, in fact, there is a $\delta_0\in(0,\delta)$ with $\partial V\subset b_{[\tau+\delta_0,\tau+\delta]}'$. Further, we can choose $\delta_0\in(0,\delta)$ so that $\tau+\delta_0$ is a continuity point of $D$ (recall Remark \ref{rem:discont.Dbar}). Note also that $D_{\tau+\delta_0}(b(\tau))=0$. The convergence of $D^N_{\tau+\delta_0}(b(\tau))$ to $D_{\tau+\delta_0}(b(\tau))=0$ then yields $V\cap\Gamma^N_{\tau+\delta_0}\neq\varnothing$ for all $N\in\NN$ large enough. 

\medskip

Next, we use Assumption \ref{ass:1} to deduce the existence of a universal constant $\delta_1>0$ such that, for all $N\geq1$, $t\in[0,T+1]$ and $x\in\Gamma^N_t$, there exists $y\in\Gamma^N_t$, with $|y-x|\geq\delta_1$, that can be connected to $x$ by a continuous curve in $\Gamma^N_t$. In addition, we deduce from Proposition \ref{prop:stabCrossing} that we can pick $V$ to be contained in the open ball of radius $\delta_1$ around $b(\tau)$ (by possibly decreasing $\delta_0\in(0,\delta)$). It follows that, for all $N\in\NN$ large enough, one can find an $x_N\in\partial V\cap\Gamma^N_{\tau+\delta_0}$. Taking $N\to\infty$, we find a limit point $x_\infty\in\partial V\subset b_{[\tau+\delta_0,\tau+\delta]}'$. The $1$-Lipschitz property of $x\mapsto D^N_{\tau+\delta_0}(x)$ and the convergence of $D^N_{\tau+\delta_0}(x_\infty)$ to $D_{\tau+\delta_0}(x_\infty)$ then imply $D_{\tau+\delta_0}(x_\infty)=0$ and, since $x_\infty\in b_{[\tau+\delta_0,\tau+\delta]}'$, also $\inf\{t\in[0,T+1]\!:D_t(b'(t))=0\}
\le\tau+\delta$, as desired.
\end{proof}

\subsection{Proof of Theorem \ref{thm:char}}\label{subse:proof}

Denote by $(\mu^{X,N},D^N)_{N\in\NN}$ a sequence of MDLA processes that converges to $(\mu^X,D)$ in distribution along a subsequence. Using this fact and the convergence of $(\mu^{\wt X,N})_{N\in\NN}$, we conclude that the joint distributions of the triplets $(\mu^{X,N},D^N,\mu^{\wt X,N})_{N\in\NN}$ (along the same subsequence) are tight, and, hence, the latter triplets converge to $(\mu^X,D,\mu^{\wt X})$ in distribution along a possibly different subsequence (recall that $\mu^{\wt X}$ is a deterministic measure). To simplify notation, we assume that the latter subsequence coincides with the original sequence. Further, in view of the Skorokhod Representation Theorem, we may assume that the convergence $(\mu^{X,N},D^N,\mu^{\wt X,N})\to(\mu^X,D,\mu^{\wt X})$ occurs almost surely on $(\Omega,\mathcal{F},\PP)$.~Our goal is then to show that, for any bounded and uniformly continuous function $f\!:\cD([0,T],\RR^d)\rightarrow\RR$, 
\begin{equation}\label{aim1}
\frac{1}{N}\sum_{i=1}^N f\big(\wt X^{i,N}_{\cdot\wedge\tau^{i,N}}\big) \underset{N\to\infty}{\longrightarrow} \int_{\widehat{\Omega}} f\big(\xi(\widehat{\omega})+B_{\cdot\wedge\tau(\omega\,,\widehat{\omega})}(\widehat{\omega})\big)\,\mathrm{d}\widehat{\PP}(\widehat{\omega})
\end{equation}
for $\PP$-almost every $\omega\in\Omega$, where $\{\wt X^{i,N}\}_{i=1}^N$ are the atoms of $\mu^{\wt X,N}$ (i.e., the underlying particle system of the $N$-th MDLA process), $\{\tau^{i,N}\}_{i=1}^N$ are the corresponding absorption times (see \eqref{eq.tau.i.N.def}), and $\xi$, $B$, $\tau$ are as in the statement of the theorem\footnote{Strictly speaking, each underlying particle path $\wt X^{i,N}$ is defined on $[0,T+1]$.~With a minor abuse of notation, we let $f$ act on the restrictions of the stopped paths to $[0,T]$ on the left-hand side~of~\eqref{aim1}.}. Indeed, the left-hand side of \eqref{aim1} tends $\PP$-almost surely  to the integral of $f$ with respect to $\mu^X$, so that \eqref{aim1} proves the theorem. 

\medskip

To invoke the stability of the crossing property (Corollary \ref{cor:stabCrossing}) below, we define, for each $N\!\in\!\nn$, a continuous version $\{Y^{i,N}\}_{i=1}^N$ of the underlying particle system~$\{\wt X^{i,N}\}_{i=1}^N$. For this purpose, we denote by $\rho^{i,N}_0< \rho^{i,N}_1<\cdots<\rho^{i,N}_{J^{i,N}}$ the ordered elements of the set which consists of $0$, $T+1$ and the jump times of $\wt X^{i,N}$ on $[0,T+1]$. We then let
\begin{equation}\label{eq.sec4.2.YiN.def}
\begin{split}
& Y^{i,N}_t := \wt X^{i,N}_0,\;\; t\in[\rho^{i,N}_0,\rho^{i,N}_1),
\quad Y^{i,N}_{T+1}:=\wt X^{i,N}_{\rho^{i,N}_{J^{i,N}-1}},\\
&Y_t^{i,N}:=\wt X^{i,N}_{\rho^{i,N}_j}+\frac{t-\rho^{i,N}_{j+1}}{\rho^{i,N}_{j+2}-\rho^{i,N}_{j+1}}
\Big(\wt X^{i,N}_{\rho^{i,N}_{j+1}}-\wt X^{i,N}_{\rho^{i,N}_j}\Big),\;\; t\in[\rho^{i,N}_{j+1},\rho^{i,N}_{j+2}),\;\; j=0,\,1,\,\ldots,\,J^{i,N}-2
\end{split}
\end{equation}
and introduce
$$
\sigma^{i,N}:=\inf\{t\ge0:\,D^N_t(Y^{i,N}_t)< N^{-\frac{1}{d}}/2\}.
$$

\smallskip


By Assumption \ref{ass:1}, the range of each $Y^{i,N}$ is comprised of segments connecting the neighboring sites of $\ZZ^d/N^{\frac{1}{d}}$ visited by $\widetilde{X}^{i,N}$, which allows us to deduce
\begin{equation}\label{eq.sec4.2.tauiN.eq.sigmaiN}
\tau^{i,N} \wedge T = \sigma^{i,N}\wedge T \quad\text{and}\quad
\sup_{t\in[0,T]}|\wt X^{i,N}_{t\wedge\tau^{i,N}}-Y^{i,N}_{t\wedge\sigma^{i,N}}| \leq N^{-\frac{1}{d}}
\end{equation}
by direct verification.~Thus, the M1 distance from $\wt X^{i,N}_{\cdot\wedge\tau^{i,N}}$ to $Y^{i,N}_{\cdot\wedge\sigma^{i,N}}$ is at most $N^{-\frac{1}{d}}$. Putting this together with the uniform continuity of $f$ we obtain
\begin{equation*}
\lim_{N\to\infty} \bigg|\frac{1}{N}\sum_{i=1}^N f\big(\wt X^{i,N}_{\cdot\wedge\tau^{i,N}}\big)-\frac{1}{N}\sum_{i=1}^N f\big(Y^{i,N}_{\cdot\wedge\sigma^{i,N}}\big)\bigg|=0.
\end{equation*}
Consequently, to prove \eqref{aim1} it is enough to show the almost sure convergence
\begin{equation}\label{aim2}
\frac{1}{N}\sum_{i=1}^N f\big(Y^{i,N}_{\cdot\wedge\sigma^{i,N}}\big) 
=\frac{1}{N}\sum_{i=1}^N f\big(\pi_{\Gamma^N}(Y^{i,N})\big)
\underset{N\to\infty}{\longrightarrow} 
\int_{C([0,T+1],\RR^d)}  f\circ\pi_\Gamma\,\mathrm{d}\mu^{\wt X},
\end{equation}
with the absorption mappings $\pi_{\Gamma^N},\pi_\Gamma\!:C([0,T+1],\RR^d)\to C([0,T],\RR^d)$ given by
\begin{align}
&\pi_{\Gamma^N}(b)=b(\cdot\wedge\beta^N(b))|_{[0,T]},\;\;
\beta^N(b)=\inf\{t\in[0,T+1]:\,D_t^N(b(t))<N^{-\frac{1}{d}}/2\}, \label{eq.sec4.2.betaN.def}\\
& \pi_\Gamma(b)=b(\cdot\wedge\beta(b))|_{[0,T]},\;\;
\beta(b)=\inf\{t\in[0,T+1]:\,D_t(b(t))=0\}.\label{eq.sec4.2.beta.def}
\end{align}
The main challenge in establishing \eqref{aim2} stems from the fact that, in general, $\pi_{\Gamma}(b)$ is discontinuous in both $\Gamma$ and $b$. The remainder of the proof relies on the stability of the crossing property (Corollary \ref{cor:stabCrossing}) to verify the desired continuity of $\pi_{\Gamma}(b)$ at almost every limiting growing aggregate $\Gamma(\omega)$ and Brownian path $b$. 

\medskip

First, we aim to demonstrate the almost sure convergence
\begin{equation}\label{eq.mainThm.Pf.absorbCont.inX}
\frac{1}{N}\sum_{i=1}^N f\big(\pi_\Gamma(Y^{i,N})\big)
\underset{N\to\infty}{\longrightarrow} 
\int_{C([0,T+1],\RR^d)}  f\circ\pi_\Gamma\,\mathrm{d}\mu^{\wt X}.
\end{equation}
To ease the notation, for any path $b\in C([0,T+1],\RR^d)$ and $r>0$, we let
$$
O_{\mathcal C}(b,r):=\big\{b'\in C([0,T+1],\RR^d):\,\max_{[0,T+1]} |b'-b|<r\big\}.
$$
If $\beta(b)>T$, we consider
\begin{equation}\label{case_a}
\eps_1:=\inf_{t\in[0,T]} D_t(b(t))=\min_{t\in[0,T]} D_t(b(t)) > 0.
\end{equation}
Since the functions $D_t(\cdot)$, $t\in[0,T]$ are $1$-Lipschitz, we have
\begin{equation*}
\inf_{t\in[0,T]} D_t(b'(t))=\min_{t\in[0,T]} D_t(b'(t))>0\quad\text{and}\quad\beta(b')>T,\quad\text{for all}\quad b'\in O_{\mathcal C}(b,\eps_1).
\end{equation*}
Thus, near the elements of $\{b\!\in\! C([0,T\!+\!1],\RR^d)\!\!:\!\beta(b)\!>\!T\}$ the mapping $\pi_\Gamma$ just restricts paths to $[0,T]$ and is therefore continuous on this set.

\medskip

To tackle the case $\beta(b)\leq T$, we recall from Corollary \ref{cor:stabCrossing} that, almost surely, for $\mu^{\wt X}$-almost every $b$ and all $\delta>0$, there exists an $\varepsilon=\varepsilon(b,\delta)>0$ such that
$$
\inf\{t\in[0,T+1]:\,D_t(b'(t))=0\} \leq \delta+\inf\{t\in[0,T]:\,D_t(b(t))=0\},\;\;b'\in O_{\mathcal C}(b,\varepsilon).
$$
On the other hand, the $1$-Lipschitz property of the functions $D_t(\cdot)$, $t\in[0,T]$ and the lower semicontinuity of $t\mapsto D_t(b(t))$ reveal that, for any path $b\in C([0,T+1],\RR^d)$ with $\beta(b)\le T$, and for any $\delta>0$, there exists an $\varepsilon'=\varepsilon'(b,\delta)>0$ for which
\begin{equation}\label{case_a.new}
\inf\{t\in[0,T+1]:\,D_t(b'(t))=0\} \geq \inf\{t\in[0,T+1]:\,D_t(b(t))=0\} - \delta,\;\;b'\in O_{\mathcal C}(b,\varepsilon').
\end{equation}
Hence, almost surely, for $\mu^{\wt X}$-almost every $b$ and all $\delta>0$, the bound $\beta(b)\le T$ implies $|\beta(b')-\beta(b)|\leq\delta$, $b'\in O_{\mathcal C}(b,\varepsilon\wedge\varepsilon')$. Combining this with the triangle inequality
\begin{equation*}
\max_{[0,T]} |\pi_\Gamma(b')-\pi_\Gamma(b)|
\leq \max_{[0,T]} |b(\cdot\wedge\beta(b'))-b(\cdot\wedge\beta(b))|+\max_{[0,T]} |b'-b|,
\end{equation*}
we see that, almost surely, the mapping $\pi_\Gamma$ is continuous at $\mu^{\wt X}$-almost every $b$. 

\medskip

The Continuous Mapping Theorem now reduces \eqref{eq.mainThm.Pf.absorbCont.inX} to the almost sure weak convergence of the empirical measure $\mu^{Y,N}$ of $\{Y^{i,N}\}_{i=1}^N$ to $\mu^{\wt X}$ in $C([0,T+1],\rr^d)$. Since $|Y^{i,N}_\cdot-\wt X^{i,N}_\cdot|\le N^{-\frac{1}{d}}$, we have $\mu^{Y,N}\to\mu^{\wt X}$ weakly in $\cD([0,T+1],\rr^d)$ almost surely, so it suffices to check the almost sure tightness of $\{\mu^{Y,N}\}_{N\in\NN}$ on $C([0,T+1],\rr^d)$. In view of the Arzel\`a-Ascoli Theorem, the latter is a consequence of the almost sure $C$-tightness of $(\mu^{\wt X,N})_{N\in\NN}$ (see the last paragraph in the proof of Proposition~\ref{prop:existence}), the implication \cite[Chapter VI, Proposition 3.26, (i)$\Rightarrow$(ii)]{JacodShiryaev} and $|Y^{i,N}_\cdot-\wt X^{i,N}_\cdot|\le N^{-\frac{1}{d}}$.

\medskip

To finish the proof we show the almost sure convergence
\begin{equation}\label{eq.mainThm.Pf.absorbCont.inN}
\frac{1}{N}\sum_{i=1}^N f\big(\pi_{\Gamma^N}(Y^{i,N})\big)
-\frac{1}{N}\sum_{i=1}^N f\big(\pi_\Gamma(Y^{i,N})\big) \underset{N\to\infty}{\longrightarrow} 0. 
\end{equation}
The Skorokhod Representation Theorem yields, almost surely, $C([0,T+1],\RR^d)$-valued $\{Y^{0,N}\}_{N\in\NN}$ and $\wt X$, living on some $(\Omega',\mathcal{F}',\PP')$, such that the law of $Y^{0,N}$ is $\mu^{Y,N}$, $N\in\NN$, the law of $\wt X$ is $\mu^{\wt X}$, and $\lim_{N\to\infty} Y^{0,N}=\wt X$ almost surely. Then, $\PP'$-almost~surely,
$$
\max_{[0,T+1]} |Y^{0,N}_{\cdot\wedge\beta^N(Y^{0,N})}-\wt X_{\cdot\wedge\beta^N(Y^{0,N})}|\underset{N\to\infty}{\longrightarrow}0 \quad\text{and}\quad
\max_{[0,T+1]} |Y^{0,N}_{\cdot\wedge\beta(Y^{0,N})}
-\wt X_{\cdot\wedge\beta(Y^{0,N})}|\underset{N\to\infty}{\longrightarrow}0.
$$
The above, the uniform continuity of $f$, and the boundedness of $f$ reduce \eqref{eq.mainThm.Pf.absorbCont.inN} to
\begin{align}
&\lim_{N\to\infty}\EE'\Big[\Big(f\big(\wt X_{\cdot\wedge\beta^N(Y^{0,N})}|_{[0,T]}\big)
-f\big(\wt X_{\cdot\wedge\beta(Y^{0,N})}|_{[0,T]}\big)\!\Big)\,\bone_{A_1}\Big]=0,\label{aim3}\\
&\lim_{N\to\infty}\EE'\Big[\Big(f\big(\wt X_{\cdot\wedge\beta^N(Y^{0,N})}|_{[0,T]}\big)
-f\big(\wt X_{\cdot\wedge\beta(Y^{0,N})}|_{[0,T]}\big)\!\Big)\,\bone_{A_2}\Big]=0,\label{aim3.2}
\end{align}
where we have partitioned $\Omega'$ into
\begin{equation*}
A_1:=\{\beta(\wt X)>T\}\quad\text{and}\quad A_2:=\{\beta(\wt X)\leq T\}.
\end{equation*}

\smallskip

Recalling the argument following \eqref{case_a} we see that on $A_1$ it holds $\beta(Y^{0,N})>T$ for all $N\in\nn$ large enough. Next, we claim that, for $\PP'$-almost every outcome in $A_1$,
\begin{equation}\label{eq.mainThm.Pf.case.a}
\liminf_{N\rightarrow\infty}\inf_{t\in[0,T]} D^N_t(Y^{0,N}_t)>0.
\end{equation}
This implies $\beta^N(Y^{0,N})>T$ for large enough $N\in\NN$, yielding \eqref{aim3}. To show~\eqref{eq.mainThm.Pf.case.a}, we argue by contradiction and let $A_0\subset A_1$ be an event  with $\PP'(A_0)>0$ on which 
$$\inf_{t\in[0,T]} D^N_t(Y^{0,N}_t) \rightarrow 0$$
along a subsequence of $N\in\NN$. Then, for all $\omega'\in A_0$, there exists a sequence $(t_{N_k})_{k\in\NN}$ in $[0,T]$ such that $\lim_{k\to\infty} D^{N_k}_{t_{N_k}}(Y^{0,N_k}_{t_{N_k}})=0$ and $t_\infty:=\lim_{k\to\infty} t_{N_k}\in[0,T]$. Further, by Remark \ref{rem:discont.Dbar}, there exists a decreasing sequence $(s_j)_{j\in\NN}$ of continuity points of $D$ with $s_j\downarrow t_\infty$ as $j\to\infty$. For any $j\in\NN$ and all large enough $k\in\NN$ (so that $t_{N_k}\le s_j$), 
\begin{equation*}
D^{N_k}_{s_j}(\wt X_{t_\infty})\leq D^{N_k}_{t_{N_k}}(\wt X_{t_\infty})
\leq D^{N_k}_{t_{N_k}}\big(Y^{0,N_k}_{t_{N_k}}\big)+|Y^{0,N_k}_{t_{N_k}}-\wt X_{t_{N_k}}|
+|\wt X_{t_{N_k}}-\wt X_{t_\infty}|\underset{k\to\infty}{\longrightarrow}0.
\end{equation*}
Hence, the right-continuity of $D_\cdot(\wt X_{t_\infty})$ and the convergences $D^{N_k}_{s_j}(\wt X_{t_\infty})\to D_{s_j}(\wt X_{t_\infty})$ as $k\to\infty$ for $j\in\NN$ lead to
$$
D_{t_\infty}(\wt X_{t_\infty}) = \lim_{j\to\infty} D_{s_j}(\wt X_{t_\infty}) 
= \lim_{j\to\infty}\,\lim_{k\to\infty} D^{N_k}_{s_j}(\wt X_{t_\infty}) = 0.
$$
This contradicts $\beta(\wt X)>T$, showing that \eqref{eq.mainThm.Pf.case.a} must hold $\PP'$-almost surely on $A_1$.

\medskip

For $\PP'$-almost every outcome in $A_2$, Corollary \ref{cor:stabCrossing} reveals that, for any $\delta\in(0,1)$,
\begin{equation}\label{est1}
\beta(Y^{0,N})\le\delta+\beta(\wt X)\;\;\text{for all large enough}\;\; N\in\NN. 
\end{equation}
Moreover, the construction of $x_N$ in the proof of Corollary \ref{cor:stabCrossing} results in
\begin{equation}\label{est2}
\beta^N(Y^{0,N})\leq \inf\{t\in[0,T\!+\!1]\!:D^N_t(Y^{0,N}_t)=0\}\leq\beta(\wt X)+\delta\;\text{for all large enough}\; N\in\NN.
\end{equation}
On the other hand, by the argument leading to \eqref{case_a.new}, for $\delta\in(0,\beta(\wt X))$,
\begin{equation}\label{est3}
\beta(Y^{0,N})>\beta(\wt X)-\delta\;\;\text{for all large enough}\;\; N\in\NN. 
\end{equation}
Further, by repeating the proof of \eqref{eq.mainThm.Pf.case.a}, we obtain, for $\delta\in(0,\beta(\wt X))$,
\begin{equation}\label{est4}
\liminf_{N\rightarrow\infty}\inf_{t\in[0,\beta(\wt X)-\delta]} D^N_t(Y^{0,N}_t)\!>\!0,\;\text{thus}\;
\beta^N(Y^{0,N})\!>\!\beta(\wt X)-\delta\;\text{for all large enough}\,N\in\NN. 
\end{equation}
By collecting \eqref{est1}--\eqref{est4}, we get $\beta(\wt X)-\delta<\beta^N(Y^{0,N}),\beta(Y^{0,N})\le\beta(\wt X)+\delta\le T+\delta$ for all large enough $N\in\NN$. This, the uniform continuity of $f$, and the boundedness of $f$ lead to \eqref{aim3.2}, completing the proof of Theorem  \ref{thm:char}. \qed

\begin{rmk}\label{rem:beta.conv}
The proof of \eqref{eq.mainThm.Pf.absorbCont.inN} presented above has another corollary that we use below. Namely, the proof of \eqref{eq.mainThm.Pf.absorbCont.inN} also reveals that it holds $\sup_{[0,T]}|Y^{0,N}_\cdot-\wt X_\cdot|\rightarrow 0$ and $\beta^N(Y^{0,N})\wedge T,\,\beta(Y^{0,N})\wedge T\rightarrow \beta(\wt X)\wedge T$, as $N\rightarrow\infty$, $\PP'$-almost surely. These, in turn, imply that, $\PP$-almost surely, as $N\rightarrow\infty$,
\begin{align*}
\mu^{Y,N}\circ\big(\pi_{\Gamma^N}(\cdot), \beta^N(\cdot)\wedge T\big)^{-1},
\;\mu^{Y,N}\circ\big(\pi_{\Gamma^N}(\cdot), \beta(\cdot)\wedge T\big)^{-1}
\longrightarrow\mu^{X}\circ\big(\,\cdot\,,\beta(\cdot)\wedge T\big)^{-1},
\end{align*}
in the topology of weak convergence on $\mathcal{P}(C([0,T],\RR^d)\times\RR)$.
\end{rmk}

\section{Connection with the supercooled Stefan problem}\label{se:Stefan}

In this section, we prove Theorem \ref{thm:main}(b.2), by investigating the relationship between the scaling limits of (external) MDLA processes and the single-phase supercooled Stefan problem (1SSP) for the heat equation under Assumptions \ref{ass:1}, \ref{ass:2}. Our analysis is motivated by the findings in \cite{ChSw}, \cite{DIRT2}, \cite{NadShk1}, \cite{DT}, \cite{LS}, \cite{DNS19}, \cite{Cuchiero}, which ultimately show that, for $d=1$, any limit point $(\mu^X,D)$ of a sequence of MDLA processes is given by the unique (in the appropriate sense) solution of the 1SSP for the heat equation.\footnote{Strictly speaking, most of the growth processes studied in these references do not fit Definition~\ref{def:extDLA} of an MDLA process, as the associated underlying particle systems do not take values in a discrete space, and the dynamics of the aggregates involve additional non-linear transformations. Nonetheless, it is not hard to see that, up to minor adjustments, the arguments in the aforementioned papers also apply to the MDLA processes considered herein, for $d=1$.} It turns out that, already for $d=2$, the limit points of MDLA processes may not solve the 1SSP for the heat equation. Nevertheless, there does exist a connection between the limit points of MDLA processes and the 1SSP for the heat equation. In the remainder of the section, we illustrate this connection and provide an example in which the limit of a sequence of MDLA processes fails to solve the 1SSP for the heat equation. 

\subsection{Single-phase Stefan problem for the heat equation} 

We start with the single-phase Stefan problem for the heat equation in its classical formulation. For a closed subset $\Gamma_0$ of $\RR^d$ and a function $w_0\in C(\RR^d,\RR)$ supported in $\RR^d\backslash\Gamma_0$, find a family $\{\Gamma_t\}_{t\in[0,T]}$ of closed subsets of $\RR^d$ and a function $w\in C^{1,2}(Q_T,\RR)\cap C(\overline{Q_T},\RR)$, with 
\begin{equation}\label{eq.sec5.QT.def}
Q_T=\{(t,x)\in (0,T)\times\RR^d:\, x\notin\Gamma_t\},
\end{equation}
such that $\nabla_x w\in C(\overline{Q_T}\setminus(\{0\}\times\overline{\RR^d\backslash\Gamma_0}),\RR^d)$ and
\begin{equation}\label{eq.stefan.PDE.def}
\begin{array}{ll}
\partial_t w = \Delta w/2 &\text{on}\;\; Q_T, \\
V = -(\nabla_x w\cdot\nu)/2 &\text{on}\;\;\{(t,x)\in(0,T]\times\RR^d:\,x\in\partial\Gamma_t\}, \\
w=0 &\text{on}\;\;\{(t,x)\in(0,T]\times\RR^d:\,x\in\partial\Gamma_t\}, \\
w(0,\cdot\,)=w_0&\text{on}\;\;\RR^d\setminus\Gamma_{0},
\end{array}
\end{equation}
where $\nu$ is the outward unit normal vector field on $\partial\Gamma_t$, $t\in(0,T]$, and $V$ is the normal growth speed on that same set.\footnote{In~\eqref{eq.stefan.PDE.def}, we have taken the ratios of the thermal conductivity to the heat capacity per unit volume and of the density of latent heat to the thermal conductivity as $1/2$.~This is done only to reduce the number of constants -- it is easy to see how these constants can be brought back by a time and space rescaling, both in~\eqref{eq.stefan.PDE.def} and in the MDLA process.~We also choose to consider the problem on the entire $\RR^d$, to avoid additional conditions at an external boundary.}~In~\eqref{eq.stefan.PDE.def}, $\Gamma_t$ represents the region occupied by the solid phase (e.g., ice) at time $t$, and $\RR^d\setminus\Gamma_t$ is occupied by the liquid phase (e.g., water).~The function $w(t,\cdot\,)$ stands for the temperature distribution in the liquid phase at time $t$.~We assume that $0$ is the temperature at which the phase transition occurs in equilibrium (hence the third line in~\eqref{eq.stefan.PDE.def}).~The temperature in the solid phase is assumed to always equal $0$, leading to a single-phase problem.~The second line in~\eqref{eq.stefan.PDE.def} constitutes a growth condition, saying that the speed at which the solid phase grows/shrinks is proportional to the rate of decrease/increase in the temperature of the liquid in the direction normal to the phase boundary.  

\medskip

In many important applications, e.g., in models of the freezing process in a supercooled liquid, and also in models of crystal growth, it is necessary to consider the Stefan problem \eqref{eq.stefan.PDE.def} with $w_0<0$. Such a situation is referred to as the supercooled regime, since then the liquid phase is cooled below its equilibrium freezing temperature initially. Note that the classical formulation of the Stefan problem does not change in this regime. Indeed, there is no a priori constraint on the sign of $w_0$ in \eqref{eq.stefan.PDE.def}.

\medskip

The downside of the classical formulation is its assumption of substantial  regularity on the solution. For example, we need $\partial\Gamma_t$ to be sufficiently smooth to define $\nu$ and for~$V$ to make sense.\footnote{More specifically, in order for $V$ to be well-defined, one may assume $\Gamma_t=\{x\in\RR^d:\, \Phi(t,x)\leq0\}$, for $\Phi\in C^1((0,T]\times\RR^d,\RR)$ with $|\nabla_x\Phi|>0$. In this case, $V=-\partial_t\Phi/|\nabla_x\Phi|$.} Establishing such a regularity is an issue in general (to date, there exist no general well-posedness results for the classical formulation \eqref{eq.stefan.PDE.def} when $d>1$), but it becomes a particular concern in the supercooled regime, where the solutions of the Stefan problem are known to exhibit singularities (see, e.g., \cite{Sher}, \cite{HOL}, \cite{DF}, \cite{HV}).
To avoid the regularity assumptions on the solution, a weak formulation of the Stefan problem was proposed: see, e.g., \cite{Ishii}, \cite{ChSw}, \cite{Vis}. Herein, we provide a modification of this definition that is well-suited for the \textit{supercooled} Stefan problem. Namely, for a closed $\Gamma_{0-}\subset\RR^d$ and a locally integrable $w_0\!:\RR^d\rightarrow(-\infty,0]$ essentially supported in $\RR^d\backslash\Gamma_{0-}$, find a non-decreasing family $\{\Gamma_t\}_{t\in[0,T]}$ of closed subsets of $\rr^d$ containing~$\Gamma_{0-}$ and locally integrable $w\!:[0,T]\times\RR^d\rightarrow\RR$, $\chi\!:[0,T]\times\RR^d\rightarrow[0,\infty)$ with
\begin{align}
&\,\forall\,t\!\in\![0,T],\varphi\!\in\! C^\infty_c([0,t]\!\times\!\RR^d)\!:
\int_{\RR^d} \!\varphi(t,x)\big(w(t,x)\!-\!\chi(t,x)\!\big) 
\!-\!\varphi(0,x)\big(w_0(x)\!-\!\bone_{\Gamma_{0-}}(x)\!\big)\mathrm{d}x
\label{eq.SecStefan.weakStefan.PDE}\\
&\qquad\qquad\qquad\qquad\qquad
= \int_0^t \int_{\RR^d} \partial_t\varphi(s,x)\big(w(s,x)\!-\!\chi(s,x)\!\big) + \frac{1}{2}\Delta \varphi(s,x)\, w(s,x)\,\mathrm{d}x\,\mathrm{d}s, \nonumber \\
&\chi(t,x)\,\bone_{\RR^d\setminus \bigcup_{0\le s\le t}\Delta\Gamma_s}(x)
=\bone_{\Gamma_t\setminus \bigcup_{0\le s\leq t}\Delta\Gamma_s}(x),
\;\; \int_{\Delta\Gamma_s} \chi(t,x)\,\mathrm{d}x = \text{Leb}(\Delta\Gamma_s),\;\; s\in[0,t], \label{eq.SecStefan.weakStefan.chi}
\end{align}
where $\Delta\Gamma_s=\Gamma_s\setminus\Gamma_{s-}$ and the union $\bigcup_{0\le s\leq t}\Delta\Gamma_s$ is taken over all $s\in[0,t]$ such that $\text{Leb}(\Delta\Gamma_s)>0$ (the set of such $s$ is at most countable),
\begin{align}
&\,\bone_{\Gamma_t}(x)\,w(t,x) = 0\;\;\text{for almost every}\;\;(t,x)\in[0,T]\times\RR^d.
\label{eq.SecStefan.weakStefan.bdryCond}
\end{align}
The displays \eqref{eq.SecStefan.weakStefan.PDE}, \eqref{eq.SecStefan.weakStefan.chi} are a weak form of the first two and the last line in \eqref{eq.stefan.PDE.def}. They ensure that the temperature in the liquid phase starts from the prescribed initial condition and evolves according to the heat equation, that the solid phase starts from the prescribed initial condition and grows with the correct speed (in a weak sense) at the continuity times, and that the energy is preserved at the jump times (i.e., the total enthalpy of a solid block that is attached at a jump time does not change at that time). The third display, \eqref{eq.SecStefan.weakStefan.bdryCond}, is a substitute for the boundary condition (the third line in \eqref{eq.stefan.PDE.def}) -- it enforces that the temperature equals to $0$ in the solid phase.~We note that, if $\text{Leb}(\Gamma_t\backslash\Gamma_{t-})\!=0$, $t\in[0,T]$, the display \eqref{eq.SecStefan.weakStefan.chi} reduces to $\chi(t,\cdot\,)=\mathbf{1}_{\Gamma_t}$, $t\in[0,T]$.

\medskip

The boundary condition \eqref{eq.SecStefan.weakStefan.bdryCond} is often strengthened (see, e.g., \cite{Ishii}, \cite{ChSw}, \cite{Vis}) to
\begin{equation}\label{eq.SecStefan.weakStefan.bdryCond.strong}
\bone_{\Gamma_t}(x)\,w(t,x) = 0,
\;\bone_{\RR^d\setminus\Gamma_t}(x)\,\bone_{\{w(t,x)\le 0\}}=0
\;\text{for almost every}\;(t,x)\in[0,T]\times\RR^d
\end{equation}
in the context of a general (not necessarily supercooled) single-phase Stefan problem.~The equations in \eqref{eq.SecStefan.weakStefan.bdryCond.strong} imply that the phase is determined by the temperature. If the temperature at a point is above freezing, the point is in the liquid phase;~and if a point is at the freezing temperature, the point belongs to the solid phase.~As a consequence, the phase $\bone_{\Gamma_t}(x)$ depends on the temperature $w(t,x)$ in a monotone way.~Using this monotonicity, a comparison principle for weak solutions of the Stefan problem, which yields its well-posedness, is established in \cite{Ishii} (see also \cite{KimVisc}, where the same monotonicity is used to develop a viscosity theory for the Stefan problem).~However, as noted in \cite{ChSw},~\cite{Vis}, the stronger version of the boundary condition, \eqref{eq.SecStefan.weakStefan.bdryCond.strong}, and its implication that the phase is almost everywhere determined by the temperature, excludes the supercooled regime.~Indeed, if the temperature of the liquid is below freezing, the second equation in \eqref{eq.SecStefan.weakStefan.bdryCond.strong} is violated.~For this reason, we impose the weaker boundary condition~\eqref{eq.SecStefan.weakStefan.bdryCond}.

\begin{rmk}\label{rem:weakStefan.tooWeak}
The weak formulation \eqref{eq.SecStefan.weakStefan.PDE}--\eqref{eq.SecStefan.weakStefan.bdryCond} is too weak to pin down the solution of the 1SSP for the heat equation uniquely. In fact, for a fixed $w_0$, adding a Lebesgue null set to $\Gamma_{0-}$ and all $\Gamma_t$, $t\in[0,T]$, and adjusting $\chi(t,\cdot\,)$, $t\in[0,T]$ on Lebesgue null sets via the first equation in \eqref{eq.SecStefan.weakStefan.chi}, results in another solution of \eqref{eq.SecStefan.weakStefan.PDE}--\eqref{eq.SecStefan.weakStefan.bdryCond}. 
At the same time, an extended initial aggregate, even if it is extended by a Lebesgue null set, can produce a different weak solution, with the difference between the aggregates having a positive Lebesgue measure at positive times.  
~This is particularly clear in the case $d=1$, with $\Gamma_{0-}=(-\infty,0]$ and $w_0$ being the negative of a ``nice'' positive probability density on~$(0,\infty)$, where the probabilistic approach of \cite{DIRT2},~\cite{NadShk1},~\cite{LS},~\cite{Cuchiero},~\cite{DNS19} shows the existence of a weak solution $(w,\Gamma,\chi)$ with $\Gamma_\cdot\!=\!(-\infty,\Lambda_\cdot]$ for a non-decreasing $\Lambda$ satisfying $\Lambda_0\!=\!0$, and with $\chi(t,\cdot)=\bone_{\Gamma_t}$.
Concurrently, for a constant $C>0$ with $\int_0^C w_0(x)\,\mathrm{d}x>-C$, using the probabilistic approach, one can construct a solution $(\widehat{w},\widehat{\Gamma},\widehat{\chi})$ of \eqref{eq.SecStefan.weakStefan.PDE}--\eqref{eq.SecStefan.weakStefan.bdryCond} with the initial condition $(\widehat{\Gamma}_{0-}:=(-\infty,0]\cup\{C\},w_0)$, with $\widehat{\Gamma}_t=(-\infty,\widehat{\Lambda}^{(1)}_t]\cup[C-\widehat{\Lambda}^{(2)}_t,C+\widehat{\Lambda}^{(3)}_t]$, and with $\widehat\chi(t,\cdot)=\bone_{\widehat\Gamma_t}$, for non-decreasing non-negative $\widehat{\Lambda}^{(1)}$, $\widehat{\Lambda}^{(2)}$, $\widehat{\Lambda}^{(3)}$ that are strictly positive on $(0,\infty)$.~As~$\widehat{\Gamma}_{0-}$ coincides with~$\Gamma_{0-}$ up to a Lebesgue null set, we see that $(\widehat{w},\widehat{\Gamma},\widehat{\chi})$ is also a weak solution of \eqref{eq.SecStefan.weakStefan.PDE}--\eqref{eq.SecStefan.weakStefan.bdryCond} with the initial condition $(\Gamma_{0-},w_0)$. The solution $(\widehat{\Gamma},\widehat{w},\widehat{\chi})$ describes the situation when an instantaneous (homogeneous) nucleation occurs at the point $C$, upon which the initially infinitesimal solid crystal at $C$ starts to grow. Both solutions $(\widehat{w},\widehat{\Gamma},\widehat{\chi})$ and $(w,\Gamma,\chi)$ obey the equations \eqref{eq.SecStefan.weakStefan.PDE}--\eqref{eq.SecStefan.weakStefan.bdryCond} with the same initial condition.
Such a non-uniqueness is the reason why the 1SSP is often referred to as ill-posed.
However, the non-uniqueness can be resolved by imposing a natural minimality constraint on the solutions of \eqref{eq.SecStefan.weakStefan.PDE}--\eqref{eq.SecStefan.weakStefan.bdryCond}.
To illustrate this, we note that, in the above example of non-uniqueness, the aggregate $\Gamma$ has the property that there exists no $t\geq0$ such that $\widehat{\Gamma}_s$ is included in $\Gamma_s$ for all $s\in[0,t]$, with a strict inclusion for at least one $s\in[0,t]$. On the other hand, $\widehat{\Gamma}$ does not satisfy this property as $\Gamma_0$ is strictly included in $\widehat{\Gamma}_0$. 
The existing well-posedness results for the supercooled Stefan problem, all of which are restricted to $d=1$ and $\Gamma_{0-}=(-\infty,0]$ (see \cite{FP2}, \cite{ChSw}, \cite{DNS19}), make additional structural assumptions on $\Gamma$, ultimately ensuring that $\Gamma$ takes its minimal form $\Gamma_\cdot=(-\infty,\Lambda_\cdot]$, with the smallest possible function $\Lambda$ (see the discussion of minimality in \cite{DNS19}). Thus, in order to achieve uniqueness for  \eqref{eq.SecStefan.weakStefan.PDE}--\eqref{eq.SecStefan.weakStefan.bdryCond}, this system needs to be equipped with an additional ``minimality" condition. Of course, to date, the existence of a minimal solution to \eqref{eq.SecStefan.weakStefan.PDE}--\eqref{eq.SecStefan.weakStefan.bdryCond}, or of any solution at all, is not known except for the cases with sufficiently strong symmetry which are reducible to one-dimensional 1SSPs.
\end{rmk}

\subsection{Probabilistic solutions of the 1SSP for the heat equation}\label{se:prob_sol}
In this sub-section, we introduce the notion of a probabilistic solution for the 1SSP and prove that it strikes a middle ground between the classical and the weak solutions. Such a notion is especially well-suited for the study of the connection between the 1SSP and the scaling limits of external MDLA processes, and for $d=1$ it has already appeared in \cite{DNS19}, where the uniqueness and regularity of the one-dimensional probabilistic solution are shown (under an additional minimality assumption on $\Gamma$). It is also worth mentioning that the probabilistic solutions obtained in \cite{DIRT2}, \cite{NadShk1}, \cite{LS}, \cite{Cuchiero} for $d=1$ do satisfy the desired minimality property, necessary for their uniqueness. This observation provides another reason for the use of probabilistic solutions. Herein, we extend the notion of a probabilistic solution for the 1SSP to arbitrary $d\geq1$.

\begin{definition}\label{def:probStefan.sol}
Let $\Gamma_{0-}\subset\RR^d$ be non-empty and closed, and let $u_0$ be a probability density essentially supported in $\RR^d\backslash\Gamma_{0-}$. A triplet $(\mu,\Gamma,\chi)$, with a non-decreasing right-continuous family $\Gamma=\{\Gamma_t\}_{t\in[0,T]}$ of closed subsets of $\rr^d$, with $\mu\in\mathcal{P}(C([0,T],\RR^d))$, and with a function $\chi$ satisfying \eqref{eq.SecStefan.weakStefan.chi}, is called a \emph{probabilistic solution} of the 1SSP \eqref{eq.stefan.PDE.def} with the initial condition $(u_0,\Gamma_{0-})$ if
\begin{itemize}
\item $\Gamma_{0-}\subset\Gamma_t$, $t\in[0,T]$\emph{;}
\item $\mu$ equals to the distribution of the stochastic process $(\xi+B_{t\wedge\widehat{\tau}})_{t\in[0,T]}$, with a random vector $\xi\sim u_0(x)\,\mathrm{d}x$, an independent standard Brownian motion $B$, and $\widehat{\tau}:=\inf\{t\in[0,T]\!:\xi+B_t\in\Gamma_t\}$\emph{;}
\item for the canonical process $X$ on $C([0,T],\RR^d)$ and $\tau:=\inf\{t\in[0,T]\!:X_t\in\Gamma_t\}$, 
\begin{equation}\label{grow_local}
\int_{\RR^d} \varphi(x)\big(\chi(t,x)-\bone_{\Gamma_{0-}}(x)\big)\,\mathrm{d}x	
= \EE^\mu\big[\varphi(X_\tau)\,\mathbf{1}_{\{\tau\le t\}}\big],\;\varphi\in C^\infty_c(\RR^d,\RR),\; t\in[0,T], 
\end{equation}
where $\EE^\mu$ stands for the expectation under $\mu$. 
\end{itemize}
\end{definition}

\begin{rmk}
The inclusion $\Gamma_{0-}\!\subset\!\Gamma_0$ may be strict, as emphasized by the subscript $0-$.
\end{rmk}

\begin{rmk}
The local growth condition \eqref{grow_local} is equivalent to saying that, on each event $\{\tau\le t\}$,
the distribution of the (representative) absorbed particle $X_\tau$  is given by $(\chi(t,x)-\bone_{\Gamma_{0-}}(x))\,\mathrm{d}x$.
Recall also that, if the function $t\mapsto\text{Leb}(\Gamma_t)$ is continuous, the condition \eqref{eq.SecStefan.weakStefan.chi} yields $\chi(t,\cdot\,)=\bone_{\Gamma_t}$, which, in turn, implies that the particles absorbed by time $t$ are distributed uniformly in $\Gamma_t\setminus\Gamma_{0-}$. Physically, the latter corresponds to a constant enthalpy in the solid phase.
\end{rmk}


The next two propositions show that our notion of a probabilistic solution is natural. The first one explains how classical solutions lead to probabilistic solutions.

\begin{proposition}\label{prop:classic.is.prob}
Let $(w,\Gamma:=\{\Gamma_t\}_{t\in[0,T]})$ be a classical solution of the single-phase Stefan problem \eqref{eq.stefan.PDE.def}, such that $w$ is bounded, $w_0:=w(0,\cdot\,)\le0$ integrates to $-1$ and vanishes at infinity, and
\begin{equation}\label{level_set}
\Gamma_t=\{x\in\RR^d:\,\Phi(t,x)\leq0\},\;\; t\in[0,T],
\end{equation}
with a function $\Phi\in C^1((0,T]\times\RR^d,\RR)\cap C([0,T]\times\RR^d,\RR)$ satisfying $\partial_t\Phi\le0$ and $|\nabla_x \Phi|>0$ on $(0,T]\times\RR^d$.~Then, for any stochastic process $X=(\xi+B_{t\wedge\widehat{\tau}})_{t\in[0,T]}$ constructed under a probability measure $\PP$, with $\xi$ being $-w_0(x)\,\mathrm{d}x$-distributed, with $B$ being an independent standard Brownian motion, and with $\widehat{\tau}:=\inf\{t\!\in\![0,T]\!:\xi+B_t\!\in\!\Gamma_t\}$, the triplet $(\PP\circ X^{-1},\Gamma,\bone_\Gamma)$ is a probabilistic solution of \eqref{eq.stefan.PDE.def} with the initial condition $(-w_0,\Gamma_0)$. 
\end{proposition}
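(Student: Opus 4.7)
The plan is to check the three bullets of Definition \ref{def:probStefan.sol} together with \eqref{eq.SecStefan.weakStefan.chi}. The inclusion $\Gamma_{0-}=\Gamma_0\subset\Gamma_t$ is immediate from $\partial_t\Phi\le 0$, and the specification of $\mu$ is built into the statement. Moreover, $\Phi\in C^1$ with $|\nabla_x\Phi|>0$ makes $\{\Phi(t,\cdot)=0\}$ a smooth, hence Lebesgue-null, hypersurface; combined with continuity of $\Phi$ in $t$ and $\partial_t\Phi\le 0$, this forces $\Gamma_{t-}=\Gamma_t$ for every $t\in(0,T]$, so $\Delta\Gamma_s=\varnothing$ for all $s$ and \eqref{eq.SecStefan.weakStefan.chi} collapses to $\chi(t,\cdot)=\bone_{\Gamma_t}$, which holds by choice.

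The core of the proof is the local growth identity \eqref{grow_local}. Set $u:=-w\ge 0$, so that $u$ solves $\partial_tu=\Delta u/2$ on $Q_T$ with $u=0$ on the lateral boundary and $u(0,\cdot)=u_0:=-w_0$ (the density of $\xi$). Write $\Omega_s:=\RR^d\setminus\Gamma_s$ and let $p(s,\cdot)$ denote the density on $\Omega_s$ of $\xi+B_s$ on the event $\{\widehat{\tau}>s\}$. Applying It\^o's formula to $\varphi\in C_c^\infty(\RR^d,\RR)$ and decomposing the law of $X_{t\wedge\widehat{\tau}}$ into its absolutely continuous and absorbed parts yields
\begin{equation*}
\EE\big[\varphi(X_{\widehat{\tau}})\bone_{\widehat{\tau}\le t}\big]
=\int_{\RR^d}\varphi\,u_0\,\mathrm{d}x
-\int_{\Omega_t}\varphi\,p(t,\cdot)\,\mathrm{d}x
+\frac{1}{2}\int_0^t\!\!\int_{\Omega_s}\Delta\varphi\,p(s,\cdot)\,\mathrm{d}x\,\mathrm{d}s.
\end{equation*}
On the PDE side, I apply Reynolds transport on $\Omega_s$ (the boundary flux for $\varphi u$ vanishes since $u=0$ on $\partial\Omega_s$), then Green's identity (again using $u=0$), and finally the Stefan condition $V=-(\nabla_x w\cdot\nu)/2=\partial_{\nu_\Gamma}u/2$; Reynolds transport on $\Gamma_s$ identifies $\int_0^t\!\int_{\partial\Gamma_s}\varphi V\,\mathrm{d}S\,\mathrm{d}s$ with $\int_{\Gamma_t\setminus\Gamma_0}\varphi\,\mathrm{d}x$. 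The upshot is
\begin{equation*}
\int_{\Gamma_t\setminus\Gamma_0}\varphi\,\mathrm{d}x
=\int_{\RR^d}\varphi\,u_0\,\mathrm{d}x-\int_{\Omega_t}\varphi\,u(t,\cdot)\,\mathrm{d}x
+\frac{1}{2}\int_0^t\!\!\int_{\Omega_s}\Delta\varphi\,u(s,\cdot)\,\mathrm{d}x\,\mathrm{d}s,
\end{equation*}
which agrees with the It\^o expression exactly when $p=u$ on $Q_T$.

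The hard part is the identification $p\equiv u$. Both $p$ and $u$ satisfy $\partial_s v=\Delta v/2$ on $Q_T$ in the weak sense (for $p$ this is obtained by applying It\^o's formula to test functions supported in $Q_T$), both agree with $u_0$ at time $0$, and both should vanish continuously on the lateral boundary $\bigcup_{s\in(0,T]}(\{s\}\times\partial\Gamma_s)$: for $u$ this is by hypothesis, while for $p$ it follows from the fact that the $C^1$ regularity of $\partial\Gamma_s$ granted by $|\nabla_x\Phi|>0$ makes every lateral boundary point regular for the absorbed Brownian motion. Classical uniqueness for the Cauchy--Dirichlet problem on the $C^1$ moving cylinder $Q_T$ then forces $p=u$ and closes the argument. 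The Reynolds transport and Green's identity computations above are standard once the boundary is known to be $C^1$, though one must be mindful that the spatial slice of $Q_T$ varies in time.
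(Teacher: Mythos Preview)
Your proof is correct and follows essentially the same route as the paper: It\^o's formula on the probabilistic side, Reynolds transport plus Green's identity plus the Stefan condition on the PDE side, matched through the identification $p=u$. The only notable difference is in that identification step: you invoke uniqueness for the Cauchy--Dirichlet problem on the moving $C^1$ domain, whereas the paper writes down the time-reversed Feynman--Kac representation $w(t,x)=\E[w_0(x+B_t)\bone_{\{x+B_s\notin\Gamma_{t-s},\,s\in[0,t)\}}]$ and integrates it against test functions to read off the sub-probability density directly---a slightly more self-contained path that sidesteps formulating a uniqueness theorem for time-varying domains.
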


\begin{proof}
The family $\Gamma$ is non-decreasing and right-continuous by $\partial_t\Phi\le 0$ on $(0,T]\times\RR^d$ and by the continuity of $\Phi$. Since the first two bullet points in Definition \ref{def:probStefan.sol} hold by construction, it remains to show the probabilistic local growth condition \eqref{grow_local}. As a preparation, we record the Feynman-Kac formula for $w$:
\begin{equation}\label{FK}
w(t,x)=\E\big[w_0(x+B_t)\,\bone_{\{x+B_s\notin\Gamma_{t-s},\,s\in[0,t)\}}\big],\;\;(t,x)\in Q_T, 
\end{equation}	
resulting from the first, third and fourth lines in \eqref{eq.stefan.PDE.def}, as well as \eqref{level_set}, the continuity of $\Phi$ and $w$, and the inequality $|\nabla_x \Phi|>0$ on $(0,T]\times\RR^d$. On the other hand, multiplying both sides of \eqref{FK} by test functions of $x$ and integrating, we conclude that $-w(t,\cdot)$ is the density of the restriction of the distribution of $X_t$ to $\RR^d\setminus\Gamma_t$. 

\medskip

Next, let us prove that, for all $\varphi\in C^2_c(\RR^d,\RR)$ and almost every $t\in(0,T)$, 
\begin{equation}\label{eq.sec5.Strong.is.Prob.eq1}
\begin{split}
\frac{\mathrm{d}}{\mathrm{d}t}\int_{\Gamma_t\backslash\Gamma_0} \varphi(x)\,\mathrm{d}x
= \int_{\partial\Gamma_t} V(t,x)\,\varphi(x)\,\sigma_t(\mathrm{d}x),
 \end{split}
 \end{equation}
where $\sigma_t$ denotes the surface measure on the boundary $\partial\Gamma_t$ and $V=-\partial_t\Phi/|\nabla_x\Phi|$.~As the support of $\varphi$ is compact and both sides in \eqref{eq.sec5.Strong.is.Prob.eq1} are linear in $\varphi$, we only need to prove \eqref{eq.sec5.Strong.is.Prob.eq1} for~$\varphi$ supported in a sufficiently small open cube $R$, centered at an arbitrary point in~$\RR^d$. Once the center of $R$ is fixed, we choose its side length and $\varepsilon>0$ to be small enough, so that, for $(s,x)\in(t-\varepsilon,t+\varepsilon)\times R$, the equality $\Phi(s,x_1,\ldots,x_d)=0$ holds iff $x_i=\kappa(s,x_1,\ldots,x_{i-1},x_{i+1},\ldots,x_d)$ and $(x_1,\ldots,x_{i-1},x_{i+1},\ldots,x_d)\in U$, with $i\in\{1,\ldots,d\}$, $\kappa\in C^1((t-\varepsilon,t+\varepsilon)\times U,\RR)$ and an open $U\subset \RR^{d-1}$ (the same for all $(s,x)\in(t-\varepsilon,t+\varepsilon)\times R$). The existence of such a choice follows from $|\nabla_x \Phi|>0$ and the Implicit Function Theorem.~Assuming $i\!=\!1$ without loss of generality, we find
$$
\partial_{\alpha}\kappa(s,x_2,\ldots,x_d) = -\frac{\partial_{\alpha}\Phi}{\partial_{x_1}\Phi}\big(s,\kappa(s,x_2,\ldots,x_d),x_2,\ldots,x_d\big),\;\;
\alpha=s,x_2,\ldots,x_d.
$$
The map $(s,x_2,\ldots,x_d)\mapsto (\kappa(s,x_2,\ldots,x_d),x_2,\ldots,x_d)$ is of class~$C^1((t\!-\!\varepsilon,t\!+\!\varepsilon)\!\times\! U,R)$, with a Jacobian determinant $\partial_s\kappa$ non-negative or non-positive everywhere (note that $\partial_s\Phi\le0$ and $\partial_{x_1}\Phi\neq0$ in $(t-\varepsilon,t+\varepsilon)\times R$), and consequently an injection on the set $((t-\varepsilon,t+\varepsilon)\times U)\cap\{\partial_s\kappa\neq0\}$. Further, for any $t-\varepsilon<t_1<t_2<t+\varepsilon$, the image of $((t_1,t_2]\times U)\cap\{\partial_s\kappa\neq0\}$ under this map is contained in $(\Gamma_{t_2}\backslash\Gamma_{t_1})\cap R$ which, in turn, is contained in the image of $(t_1,t_2]\times U$. Then, by Sard's Theorem, the image of $((t_1,t_2]\times U)\cap\{\partial_s\kappa\neq0\}$ equals to $(\Gamma_{t_2}\backslash\Gamma_{t_1})\cap R$ up to a Lebesgue null set. Thus,
\begin{equation*}
\begin{split}
&\,\int_{\Gamma_{t_2}\backslash\Gamma_{t_1}} \varphi(x)\,\mathrm{d}x 
= \int_{(\Gamma_{t_2}\backslash\Gamma_{t_1})\cap R} \varphi(x)\,\mathrm{d}x\\
&= \int_{((t_1,t_2]\times U)\cap\{\partial_s\kappa\neq0\}}
\big|\partial_s\kappa(s,x_2,\ldots,x_d)\big|\,\varphi\big(\kappa(s,x_2,\ldots,x_d),x_2,\ldots,x_d\big)
\,\mathrm{d}s\,\mathrm{d}x_2\,\ldots\,\mathrm{d}x_d\\
&= \int_{(t_1,t_2]\times U}
\bigg|\frac{\partial_{s}\Phi}{\partial_{x_1}\Phi}(s,\kappa,x_2,\ldots,x_d)\bigg|
\,\varphi(\kappa,x_2,\ldots,x_d)
\,\mathrm{d}s\,\mathrm{d}x_2\,\ldots\,\mathrm{d}x_d.
 \end{split}
 \end{equation*}
Moreover, for any $s\in[t_1,t_2]$, the map $(x_2,\ldots,x_d)\mapsto (\kappa(s,x_2,\ldots,x_d),x_2,\ldots,x_d)$ is a bijection between $U$ and $\partial\Gamma_s$. Denoting its Jacobian by $J$, we recall that the associated metric tensor is given by $J^\top J$,
$$
\text{det}(J^\top J) = 1+ |\nabla_x\kappa|^2=\frac{|\nabla_x\Phi|^2}{(\partial_{x_1}\Phi)^2}\quad\text{and}\quad
\sigma_s(\mathrm{d}x)=  \bigg|\frac{\nabla_x\Phi}{\partial_{x_1}\Phi}(s,\kappa,x_2,\ldots,x_d)\bigg|\,\mathrm{d}x_2\,\ldots\,\mathrm{d}x_d.
$$
We now derive \eqref{eq.sec5.Strong.is.Prob.eq1}, using that $\partial_s\Phi\le0$, via
\begin{equation*}
\begin{split}
\int_{\Gamma_{t_2}\backslash\Gamma_{t_1}} \varphi(x)\,\mathrm{d}x 
&= \int_{t_1}^{t_2} \int_U
\bigg|\frac{\partial_{s}\Phi}{\partial_{x_1}\Phi}(s,\kappa,x_2,\ldots,x_d)\bigg|
\,\varphi(\kappa,x_2,\ldots,x_d)\,\mathrm{d}x_2\,\ldots\,\mathrm{d}x_d\,\mathrm{d}s\\
&= \int_{t_1}^{t_2}\int_{\partial \Gamma_s}
\frac{-\partial_{s}\Phi}{|\nabla_x\Phi|}(s,x)\,\varphi(x)\,\sigma_s(\mathrm{d}x) \,\mathrm{d}s.
 \end{split}
 \end{equation*}

\smallskip

Next, from \eqref{eq.sec5.Strong.is.Prob.eq1} and the second line in \eqref{eq.stefan.PDE.def} we obtain
\begin{equation}\label{eq.sec5.Strong.is.Prob.limBdry} 
\begin{split}
\frac{\mathrm{d}}{\mathrm{d}t}\int_{\Gamma_t\backslash\Gamma_0} \varphi(x)\,\mathrm{d}x	
= \int_{\partial\Gamma_t} V(t,x)\,\varphi(x)\,\sigma_t(\mathrm{d}x)
&= -\frac{1}{2} \int_{\partial\Gamma_t} (\nabla_x w\cdot\nu)(t,x)\,\varphi(x)\,\sigma_t(\mathrm{d}x)\\
& = -\frac{1}{2} \lim_{\varepsilon\downarrow0} \int_{\partial\Gamma^{\varepsilon}_t} (\nabla_x w\cdot\nu^\varepsilon)(t,x)\,\varphi(x)\,\sigma^{\varepsilon}_t(\mathrm{d}x),
\end{split}
\end{equation}
where $\Gamma^{\varepsilon}_t:=\{x\in\RR^d\!:\Phi(t,x)\leq\varepsilon\}$, while $\nu^\varepsilon$ and $\sigma^{\varepsilon}_t$ denote, respectively, the outward unit normal vector field and the surface measure on the boundary $\partial\Gamma^{\varepsilon}_t$. Indeed, for a local parameterization of~$\partial\Gamma^\varepsilon_t$, without loss of generality given by $(x_2,\ldots,x_d)\mapsto (\kappa^{\varepsilon}(t,x_2,\ldots,x_d),x_2,\ldots,x_d)$ with~$\kappa^{\varepsilon}$ of class $C^1$, it holds
$$
\sigma^{\varepsilon}_t(\mathrm{d}x) = \bigg|\frac{\nabla_x\Phi}{\partial_{x_1}\Phi}(t,\kappa^{\varepsilon},x_2,\ldots,x_d)\bigg|\, \mathrm{d}x_2\,\ldots\,\mathrm{d}x_d.
$$
The last equality in~\eqref{eq.sec5.Strong.is.Prob.limBdry} is due to $\kappa^{\varepsilon}\!\to\!\kappa$ as $\varepsilon\downarrow0$ and the continuity of $\nabla_x w$, $\nabla\Phi$.
~By Green's first identity, the first line in \eqref{eq.stefan.PDE.def}, and the analogue of \eqref{eq.sec5.Strong.is.Prob.eq1} for $\Gamma^\varepsilon_t$, we have
\begin{equation}\label{eq.sec5.Strong.is.Prob.epsEq.Fubini.Just}
\begin{split}
&\,-\frac{1}{2} \int_{\partial\Gamma^{\varepsilon}_t} (\nabla_x w\cdot \nu^\varepsilon)(t,x)\,\varphi(x)\,\sigma^{\varepsilon}_t(\mathrm{d}x) \\
& =\frac{1}{2} \int_{\RR^d\backslash\Gamma^\varepsilon_t} \Delta w(t,x)\,\varphi(x)\,\mathrm{d}x 
+ \frac{1}{2} \int_{\rr^d\backslash\Gamma^{\varepsilon}_t}  (\nabla_x w\cdot\nabla\varphi)(t,x)\,\mathrm{d}x \\
& =\int_{\rr^d\backslash\Gamma^{\varepsilon}_t} \partial_t w(t,x)\,\varphi(x)\,\mathrm{d}x
+\frac{1}{2}\int_{\rr^d\backslash\Gamma^{\varepsilon}_t}(\nabla_x w\cdot\nabla\varphi)(t,x)\,\mathrm{d}x \\
&=\frac{\mathrm{d}}{\mathrm{d}t}\!\int_{\rr^d\backslash\Gamma^{\varepsilon}_t} \!\! w(t,x)\varphi(x)\,\mathrm{d}x 
\!+\!\int_{\partial\Gamma^\varepsilon_t}\!\! V^\varepsilon(t,x) w(t,x) \varphi(x)\,\sigma^{\varepsilon}_t(\mathrm{d}x) 
\!+\!\frac{1}{2}\!\int_{\rr^d\backslash\Gamma^{\varepsilon}_t} \!\! (\nabla_x w\cdot\nabla\varphi)(t,x)\,\mathrm{d}x.
\end{split}
\end{equation}

\smallskip

Using \eqref{eq.sec5.Strong.is.Prob.limBdry}, \eqref{eq.sec5.Strong.is.Prob.epsEq.Fubini.Just}, the third line in \eqref{eq.stefan.PDE.def}, and Green's first identity, we continue:
\begin{equation*}
\begin{split}
\frac{\mathrm{d}}{\mathrm{d}t}\int_{\Gamma_t\backslash\Gamma_0} \varphi(x)\,\mathrm{d}x
& = \lim_{\varepsilon\downarrow0} \bigg(\frac{\mathrm{d}}{\mathrm{d}t}\int_{\rr^d\backslash\Gamma^{\varepsilon}_t} w(t,x)\,\varphi(x)\,\mathrm{d}x 
+ \frac{1}{2}\int_{\rr^d\backslash\Gamma^{\varepsilon}_t} \nabla_x w(t,x)\cdot\nabla\varphi(x)\,\mathrm{d}x\bigg) \\
& = \lim_{\varepsilon\downarrow0} \bigg(\frac{\mathrm{d}}{\mathrm{d}t}\int_{\RR^d\backslash\Gamma^\varepsilon_t}  
w(t,x)\,\varphi(x)\,\mathrm{d}x 
- \frac{1}{2}\int_{\rr^d\backslash\Gamma^{\varepsilon}_t} w(t,x)\,\Delta\varphi(x)\,\mathrm{d}x\bigg) \\
& = \frac{\mathrm{d}}{\mathrm{d}t}\int_{\RR^d\backslash\Gamma_t}  
w(t,x)\,\varphi(x)\,\mathrm{d}x 
- \frac{1}{2}\int_{\rr^d\backslash\Gamma_t} w(t,x)\,\Delta\varphi(x)\,\mathrm{d}x,
\end{split}
\end{equation*}
where we have relied on the Dominated Convergence Theorem to interchange $\lim_{\varepsilon\downarrow0}$ and $\mathrm{d}/\mathrm{d}t$ in the last equality (hence, we obtain it for almost every $t\in(0,T)$). The application of the Dominated Convergence Theorem is justified by the fact that the term on the first line in \eqref{eq.sec5.Strong.is.Prob.epsEq.Fubini.Just} and the last two summands on the last line in \eqref{eq.sec5.Strong.is.Prob.epsEq.Fubini.Just} are absolutely bounded uniformly in $\varepsilon$.~Recalling that $-w(t,\cdot)$ is the density of the restriction of the distribution of $X_t$ to  $\RR^d\setminus\Gamma_t$, we obtain, for almost every $t\in(0,T)$, 
\begin{equation}\label{eq.sec5.Strong.is.Prob.eqMain.1}
\begin{split}
\frac{\mathrm{d}}{\mathrm{d}t}\int_{\Gamma_t\backslash\Gamma_0} \varphi(x)\,\mathrm{d}x 
= -\,\frac{\mathrm{d}}{\mathrm{d}t}\,\EE\big[\varphi(X_t)\,\mathbf{1}_{\{\widehat{\tau}>t\}}\big]
+\frac{1}{2}\,\EE\big[\Delta\varphi(X_t)\,\mathbf{1}_{\{\widehat{\tau}>t\}}\big].
\end{split}
\end{equation}

\smallskip


On the other hand, for almost every $t\in(0,T)$, it holds 
\begin{equation}\label{eq.sec5.Strong.is.Prob.eqMain.2}
\begin{split}
\frac{\mathrm{d}}{\mathrm{d}t}\,\EE\big[\varphi(X_{\widehat{\tau}})\,\mathbf{1}_{\{\widehat{\tau}\le t\}}\big]
& = \frac{\mathrm{d}}{\mathrm{d}t}\,\EE\big[\varphi(X_{t\wedge\widehat{\tau}})\big]
- \frac{\mathrm{d}}{\mathrm{d}t}\,\EE\big[\varphi(X_t)\,\mathbf{1}_{\{\widehat{\tau}>t\}}\big] \\
& = \frac{1}{2}\,\EE\big[\Delta\varphi(X_t)\,\mathbf{1}_{\{\widehat{\tau}>t\}}\big]
- \frac{\mathrm{d}}{\mathrm{d}t}\,\EE\big[\varphi(X_t)\,\mathbf{1}_{\{\widehat{\tau}>t\}}\big],
\end{split}
\end{equation}
where the second equality follows by taking the expectation and then the derivative with respect to $t$ on both sides of It\^{o}'s formula
\begin{equation*}
\varphi(X_{t\wedge\widehat{\tau}})
=\varphi(\xi)+\frac{1}{2}\int_0^t \mathbf{1}_{\{\widehat{\tau}>s\}}\,\Delta\varphi(X_s)\,\mathrm{d}s
+\int_0^t \mathbf{1}_{\{\widehat{\tau}>s\}}\,\nabla\varphi(X_s)\cdot\mathrm{d}B_s.
\end{equation*}
By equating the left-hand sides of \eqref{eq.sec5.Strong.is.Prob.eqMain.1} and \eqref{eq.sec5.Strong.is.Prob.eqMain.2} one reaches \eqref{grow_local} with~$\chi(t,\cdot\,)\!:=\!\bone_{\Gamma_t}$, $t\in[0,T]$ via $\int_{\Gamma_0\backslash\Gamma_{0}} \varphi(x)\,\mathrm{d}x=0=\EE[\varphi(X_{\widehat{\tau}})\,\mathbf{1}_{\{\widehat{\tau}\le 0\}}]$.
\end{proof}

The next proposition shows how probabilistic solutions translate into weak solutions.

\begin{proposition}\label{prop:prob.is.weak}
Let $(\mu,\Gamma,\chi)$ be a probabilistic solution of the single-phase supercooled Stefan problem (see Definition \ref{def:probStefan.sol}) for $t\!\in\![0,T]$, with an initial condition $(u_0,\Gamma_{0-})$. Let $X$ denote the canonical process on $C([0,T],\RR^d)$ and let the density $u(t,\cdot)$ of the restriction of $\mu\circ X^{-1}_t$ to $\RR^d\backslash\Gamma_t$ be extended into $\Gamma_t$ by $0$. 
Then, $(-u,\Gamma,\chi)$ is a weak solution of the single-phase supercooled Stefan problem with the initial condition $(-u_0,\Gamma_{0-})$ in the sense of \eqref{eq.SecStefan.weakStefan.PDE}--\eqref{eq.SecStefan.weakStefan.bdryCond}.
\end{proposition}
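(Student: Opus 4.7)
The plan is to verify the three requirements \eqref{eq.SecStefan.weakStefan.PDE}--\eqref{eq.SecStefan.weakStefan.bdryCond} for the candidate triple $(-u,\Gamma,\chi)$. The boundary condition \eqref{eq.SecStefan.weakStefan.bdryCond} is immediate from our convention to extend $u$ by zero inside $\Gamma_t$, and the phase-consistency relation \eqref{eq.SecStefan.weakStefan.chi} is inherited verbatim from Definition \ref{def:probStefan.sol}. All the work therefore goes into deriving the weak heat identity \eqref{eq.SecStefan.weakStefan.PDE}. The density $u(t,\cdot)$ of the restriction of $\mu\circ X_t^{-1}$ to $\RR^d\setminus\Gamma_t$ is well defined and locally integrable, since on $\{\tau>t\}$ the process $X_t$ is dominated by an unabsorbed Brownian motion with density $u_0\ast p_t$.

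The backbone of the argument is It\^o's formula applied to $\varphi(s,X_s)$, $s\in[0,t]$, where $X_s=\xi+B_{s\wedge\tau}$ is the canonical process under $\mu$ and $\varphi\in C_c^\infty([0,t]\times\RR^d)$. Taking expectation $\EE^\mu$ kills the stochastic integral and yields
\begin{equation*}
\EE^\mu[\varphi(t,X_t)]-\EE^\mu[\varphi(0,X_0)]=\int_0^t \EE^\mu[\partial_s\varphi(s,X_s)]\,\mathrm{d}s+\int_0^t \EE^\mu\!\left[\tfrac{1}{2}\Delta\varphi(s,X_s)\bone_{\{\tau>s\}}\right]\mathrm{d}s.
\end{equation*}
The next step is to convert each expectation into a spatial integral against $u$, $\chi$, $\bone_{\Gamma_{0-}}$, or $u_0$. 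Splitting every $\EE^\mu[\psi(s,X_s)]$ according to the events $\{\tau>s\}$ and $\{\tau\le s\}$, and using that $X_s=X_\tau$ on the second event, the first piece equals $\int\psi(s,x)u(s,x)\,\mathrm{d}x$ by the definition of $u$, while the second piece equals $\int\psi(s,x)(\chi(s,x)-\bone_{\Gamma_{0-}}(x))\,\mathrm{d}x$ by the local growth condition \eqref{grow_local} applied with the spatial test function $\psi(s,\cdot)$. Also $\EE^\mu[\varphi(0,X_0)]=\int\varphi(0,x)u_0(x)\,\mathrm{d}x$ by construction of $\mu$, and Fubini legitimately interchanges $\EE^\mu$ with the time integral because of the compact support of $\varphi$.

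After these substitutions the It\^o identity becomes
\begin{equation*}
\int \varphi(t,x)\big(u+\chi-\bone_{\Gamma_{0-}}\big)(t,x)\,\mathrm{d}x-\int\varphi(0,x)u_0(x)\,\mathrm{d}x
\end{equation*}
\begin{equation*}
=\int_0^t\!\int\partial_s\varphi(s,x)\big(u+\chi-\bone_{\Gamma_{0-}}\big)(s,x)\,\mathrm{d}x\,\mathrm{d}s+\int_0^t\!\int\tfrac{1}{2}\Delta\varphi(s,x)u(s,x)\,\mathrm{d}x\,\mathrm{d}s.
\end{equation*}
Because $\bone_{\Gamma_{0-}}$ is time-independent, the fundamental theorem of calculus gives $\int\varphi(t,x)\bone_{\Gamma_{0-}}(x)\,\mathrm{d}x-\int_0^t\!\int\partial_s\varphi(s,x)\bone_{\Gamma_{0-}}(x)\,\mathrm{d}x\,\mathrm{d}s=\int\varphi(0,x)\bone_{\Gamma_{0-}}(x)\,\mathrm{d}x$, which moves the $\bone_{\Gamma_{0-}}$ contribution to the initial-condition slot. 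Multiplying through by $-1$ then reproduces exactly \eqref{eq.SecStefan.weakStefan.PDE} with $w=-u$ and $w_0=-u_0$. The single nontrivial point to watch is the application of \eqref{grow_local} with a time-varying spatial slice $\varphi(s,\cdot)$ or $\partial_s\varphi(s,\cdot)$; but since \eqref{grow_local} is a pointwise-in-$t$ identity between Borel measures and both $\varphi$ and $\partial_s\varphi$ are uniformly bounded with compact support, this is a straightforward dominated convergence argument and presents no real obstacle.
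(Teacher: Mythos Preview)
Your argument is correct and in fact more direct than the paper's. Both proofs dispatch \eqref{eq.SecStefan.weakStefan.chi} and \eqref{eq.SecStefan.weakStefan.bdryCond} in the same trivial way and then use It\^o's formula together with the growth condition \eqref{grow_local} to derive \eqref{eq.SecStefan.weakStefan.PDE}, but they organize the computation differently. The paper first restricts to product test functions $\varphi(s,x)=\phi(s)\psi(x)$, then shows that the jumps of $s\mapsto\int\psi(x)(\widehat u(s,x)+\widehat\chi(s,x))\,\mathrm{d}x$ are dominated by the jumps of $s\mapsto\mu(\tau\le s)$, which lets it further reduce to $\phi=\bone_{[s_1,s_2)}$; only at that point does it invoke It\^o's formula (in spatial form) and \eqref{grow_local} on the interval $[s_1,s_2]$. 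You instead apply It\^o's formula once to the full time-space test function $\varphi(s,X_s)$ and convert each $\EE^\mu[\psi(s,X_s)]$ by the split $\{\tau>s\}\cup\{\tau\le s\}$ combined with \eqref{grow_local} at time~$s$. This bypasses the jump-control step entirely and is the cleaner route; the paper's reduction to indicator functions buys nothing extra here. The one point worth making fully explicit in your write-up is the identification $\bone_{\{\tau>s\}}=\bone_{\{X_s\notin\Gamma_s\}}$ (so that the $\{\tau>s\}$ piece really matches $\int\psi(s,x)u(s,x)\,\mathrm{d}x$), which follows from right-continuity of $t\mapsto D_t(x)$, the $1$-Lipschitz property in $x$, and continuity of $X$.
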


\begin{proof}
Note that \eqref{eq.SecStefan.weakStefan.chi} is a part of Definition \ref{def:probStefan.sol}, and that $\Gamma$ and $w:=-u$ satisfy~\eqref{eq.SecStefan.weakStefan.bdryCond} by the construction of $u$. Thus, it remains to prove \eqref{eq.SecStefan.weakStefan.PDE}. We rewrite \eqref{eq.SecStefan.weakStefan.PDE} for a test function $\varphi(s,x)=\phi(s)\,\psi(x)$, with $\phi\in C^\infty([0,t],\RR)$ and $\psi\in C^{\infty}_c(\RR^d,\RR)$:
\begin{equation}\label{eq.sec5.Prob.is.Weak.Pf.eq1}
\int_\RR \int_{\RR^d} \psi(x)\,\big(\widehat{u}(s,x)+\widehat{\chi}(s,x)\big)\,\mathrm{d}x\,\mathrm{d}\phi(s)
+\frac{1}{2}\int_\RR \int_{\RR^d} \phi(s)\,\Delta\psi(x)\,u(s,x)\,\mathrm{d}x\,\mathrm{d}s=0,
\end{equation}
where $\widehat{u}(s,\cdot\,)\!:=\!u_0\,\bone_{\{0\}}(s)+u(s,\cdot\,)\,\bone_{(0,T]}(s)$, $\widehat{\chi}(s,\cdot\,)\!:=\!\bone_{\Gamma_{0-}}\,\bone_{\{0\}}(s)+\chi(s,\cdot\,)\,\bone_{(0,T]}(s)$, and we have taken $\phi\equiv0$ on $\RR\setminus[0,t)$. Observe that, once \eqref{eq.SecStefan.weakStefan.PDE} is established for the test functions of the form $\varphi(s,x)=\phi(s)\,\psi(x)$, a standard approximation argument shows that \eqref{eq.SecStefan.weakStefan.PDE} holds for all $\varphi\in C^\infty_c([0,t]\times\RR^d,\RR)$. Hence, it suffices to prove \eqref{eq.sec5.Prob.is.Weak.Pf.eq1}.

\medskip

To verify \eqref{eq.sec5.Prob.is.Weak.Pf.eq1}, we record that, for $0<s_1<s_2<t$,
\begin{align*}
\bigg|\int_{\RR^d} \psi(x)\,\big(u(s_2,x)-u(s_1,x)\big)\,\mathrm{d}x\bigg|
&=\big|\EE^{\mu}\big[\psi(X_{s_2})\,\bone_{\{\tau>s_2\}} - \psi(X_{s_1})\,\bone_{\{\tau>s_1\}}\big]\big| \\
&\leq \EE^{\mu} \big[|\psi(X_{s_1})|\,\bone_{\{\tau\in(s_1,s_2]\}}\big] + 
\EE^{\mu} \big[|\psi(X_{s_2})\!-\!\psi(X_{s_1})|\big].
\end{align*}
The limit of the latter expression as $s_1\uparrow s_2$ does not exceed the $(\sup_{\rr^d} |\psi|)$-multiple of the jump in $s\mapsto\mu(\tau\le s)$ at $s_2$. By invoking \eqref{grow_local}, we conclude that the jumps of 
$$
(0,t)\to\rr,\;\; s\mapsto \int_{\RR^d} \psi(x)\,\big(\widehat{u}(s,x)+\widehat{\chi}(s,x)\big)\,\mathrm{d}x
$$
do not exceed the $(2\sup_{\rr^d} |\psi|)$-multiples of the jumps in $s\mapsto\mu(\tau\le s)$. For any $\varepsilon>0$, \\ the latter function has at most a finite number of jumps that exceed $\varepsilon$.~Then, the Continuous Mapping Theorem implies that the left-hand side in \eqref{eq.sec5.Prob.is.Weak.Pf.eq1} can be approximated to any precision by replacing~$\phi$ with an appropriately chosen function of the form $\sum_{j=1}^J \phi(jt/J)\,\bone_{[(j-1)t/J,\,jt/J)}$.~Thus, it suffices to check \eqref{eq.sec5.Prob.is.Weak.Pf.eq1} for $\phi=\bone_{[s_1,s_2)}$ with arbitrary $0\leq s_1<s_2\leq t$. For such $\phi$, \eqref{eq.sec5.Prob.is.Weak.Pf.eq1} becomes
\begin{equation*}\label{eq.sec5.Prob.is.Weak.Pf.eq2}
\int_{\RR^d} \psi(x)\,\big(u(s_2,x)+\chi(s_2,x)-\widehat{u}(s_1,x)-\widehat{\chi}(s_1,x)\big)\,\mathrm{d}x
=\frac{1}{2}\int_{s_1}^{s_2}\!\!\int_{\RR^d} \Delta\psi(x)\,u(s,x)\,\mathrm{d}x\,\mathrm{d}s.
\end{equation*}

\smallskip

In view of the connection between $u$ and $X$, the last equation is equivalent to
\begin{equation}\label{23}
\begin{split}
&\,\int_{\RR^d} \psi(x)\,\big(\chi(s_2,x)-\widehat{\chi}(s_1,x)\big)\,\mathrm{d}x \\
&=-\EE\big[\psi(X_{s_2})\,\mathbf{1}_{\{\tau>s_2\}}\big]
+\EE\big[\psi(X_{s_1})\,\mathbf{1}_{\{\tau>\widehat{s_1}\}}\big]
+\frac{1}{2}\int_{s_1}^{s_2} \EE\big[\Delta\psi(X_s)\,\mathbf{1}_{\{\tau>s\}}\big]\,\mathrm{d}s,
\end{split}
\end{equation}
where $\widehat{s_1}:=s_1$ for $s_1>0$, $\widehat{s_1}:=0-$ for $s_1=0$, and we take $0>0-$.
By \eqref{grow_local},
\begin{equation}\label{24}
\int_{\RR^d} \psi(x)\,\big(\chi(s_2,x)-\widehat{\chi}(s_1,x)\big)\,\mathrm{d}x
=\EE\big[\psi(X_\tau)\,\mathbf{1}_{\{\tau\le s_2\}}\big]
-\EE\big[\psi(X_\tau)\,\mathbf{1}_{\{\tau\le \widehat{s_1}\}}\big],
\end{equation}
so it is enough to verify that the right-hand sides in \eqref{23} and \eqref{24} coincide. For this purpose, we calculate their difference to
\begin{align*}
&\EE\big[\psi(X_{s_2\wedge\tau})\big] - \EE\big[\psi(X_{s_1\wedge\tau})\big]
-\frac{1}{2}\int_{s_1}^{s_2} \EE\big[\Delta\psi(X_s)\,\mathbf{1}_{\{\tau>s\}}\big]\,\mathrm{d}s,
\end{align*}
which evaluates to $0$ thanks to It\^{o}'s formula for $\psi(X_{\cdot\wedge\tau})$.
\end{proof}

\subsection{Limit points of MDLA processes and probabilistic solutions of 1SSP}
\label{subse:DLA.vs.Stefan}

The analysis of this subsection is restricted to $d\in\{1,2\}$.~Consider a limit point~$(\mu^X,D)$ of a sequence of external MDLA processes fulfilling Assumption \ref{ass:1}, and the associated $\mu^X_0$,~$\Gamma_{0-}$. Assume that $\mu^X_0(\mathrm{d}x)=u_0(x)\,\mathrm{d}x$ for a probability density $u_0$ essentially supported in $\rr^d\backslash\Gamma_{0-}$. Then, by Theorem \ref{thm:char}, almost every realization of $(\mu^X,D)$ results in a pair $(\mu^X,\Gamma)$ (with $\Gamma:=(\Gamma_t)_{t\in[0,T]}$ and $\Gamma_t:=\{x\in\RR^d\!:D_t(x)=0\}$) satisfying all properties of a probabilistic solution of the 1SSP \eqref{eq.stefan.PDE.def} with the initial condition $(u_0,\Gamma_{0-})$ and with $\chi(t,\cdot\,)\!=\!\bone_{\Gamma_t}$, $t\in[0,T]$, possibly except for the growth condition \eqref{grow_local}. The next proposition shows that, under the additional Assumption~\ref{ass:2}, a version of \eqref{grow_local} with an inequality holds for $(\mu^X,\Gamma)$ almost surely.

\begin{proposition}\label{prop:limDLA.probSol.ineq}
\!For $d\!\in\!\{1,2\}$ and under Assumptions \ref{ass:1} and \ref{ass:2},~let $(\mu^X,D)$~be a limit point of a sequence of external MDLA processes and $\Gamma_t:=\{x\in\RR^d\!:D_t(x)=0\}$, $t\in[0,T]$. Then, for almost every realization of $(\mu^X,D)$, 
\begin{align}
&\int_{\Gamma_t\backslash\Gamma_{0-}} \varphi(x)\,\mathrm{d}x
\geq\EE^{\mu^X}\big[\varphi(X_{\tau})\,\bone_{\{\tau\leq t\}}\big] 
,\;\;\varphi\in C^\infty_c(\RR^d,[0,\infty)),\;\; t\in[0,T),\label{eq.FB.ineq}
\end{align}
where $X$ is the canonical process on $C([0,T],\RR^d)$, and $\tau:=\inf\{t\in[0,T]\!: X_t\in\Gamma_t\}$.
\end{proposition}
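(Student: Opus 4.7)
The strategy is to pass to the limit an \emph{exact} discrete balance between the volume added to the aggregate and the empirical measure of absorbed particle positions, and to attribute the loss of equality to the fact that $\int\varphi(x)\bone_{\Gamma_t}(x)\,\mathrm{d}x$ is only upper semicontinuous under the distance convergence of aggregates. Assumption \ref{ass:2}, combined with the unit-jump structure of Assumption \ref{ass:1} which forces absorbing particles to enter the $N^{-1/d}/2$-neighborhood through an adjacent cube rather than through the interior of $\Gamma^N_{t-}$, yields a bijection between $\{i:\tau^{i,N}\le t\}$ and the hypercubes of $\cC_N$ making up $\Gamma^N_t\setminus\Gamma^N_{0-}$, with the cube of particle $i$ centered at $X^{i,N}_{\tau^{i,N}}$. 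Approximating $\varphi\in C^\infty_c(\rr^d,[0,\infty))$ by its value at each center gives the discrete identity
\begin{equation*}
\int_{\Gamma^N_t\setminus\Gamma^N_{0-}}\varphi(x)\,\mathrm{d}x=\frac{1}{N}\sum_{i:\,\tau^{i,N}\le t}\varphi\big(X^{i,N}_{\tau^{i,N}}\big)+O\big(N^{-1/d}\big).
\end{equation*}

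For the left-hand side, I would invoke the Skorokhod Representation Theorem to make $D^N\to D$ almost sure, fix a continuity time $t\in\mathbb T$ of $D$ (see Remark \ref{rem:discont.Dbar}), so that $D^N_t\to D_t$ uniformly on the compact $\mathrm{supp}\,\varphi$. For every $\epsilon>0$ and large $N$, $\Gamma^N_t\cap\mathrm{supp}\,\varphi\subset\{x:D_t(x)<\epsilon\}$, so $\limsup_N\!\int_{\Gamma^N_t}\varphi\le\int_{\{D_t<\epsilon\}}\varphi$; letting $\epsilon\downarrow0$ and using $\{D_t=0\}=\Gamma_t$ with dominated convergence yields $\limsup_N\!\int_{\Gamma^N_t}\varphi\le\int_{\Gamma_t}\varphi$. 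Subtracting the full convergence $\int_{\Gamma^N_{0-}}\varphi\to\int_{\Gamma_{0-}}\varphi$ supplied by Assumption \ref{ass:2}(d) produces
\begin{equation*}
\limsup_{N\to\infty}\int_{\Gamma^N_t\setminus\Gamma^N_{0-}}\varphi(x)\,\mathrm{d}x\le\int_{\Gamma_t\setminus\Gamma_{0-}}\varphi(x)\,\mathrm{d}x.
\end{equation*}

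For the right-hand side, I would use the joint weak convergence $\mu^{Y,N}\circ(\pi_{\Gamma^N}(\cdot),\beta^N(\cdot)\wedge T)^{-1}\to\mu^X\circ(\cdot,\beta(\cdot)\wedge T)^{-1}$ established in Remark \ref{rem:beta.conv}. Evaluating $g(b,s):=\varphi(b(s\wedge T))\bone_{\{s\le t\}}$ against the pre-limit measure matches, thanks to \eqref{eq.sec4.2.tauiN.eq.sigmaiN} and the Lipschitz regularity of $\varphi$, the sum $\frac{1}{N}\sum_i\varphi(X^{i,N}_{\tau^{i,N}})\bone_{\{\tau^{i,N}\le t\}}$ up to $O(N^{-1/d})$ for $t<T$. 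Since $g$ is bounded and continuous off $\{s=t\}$, which the limit measure charges with probability $\mu^X(\tau=t)$, the Portmanteau theorem gives the convergence $\frac{1}{N}\sum_i\varphi(X^{i,N}_{\tau^{i,N}})\bone_{\{\tau^{i,N}\le t\}}\to\EE^{\mu^X}[\varphi(X_\tau)\bone_{\{\tau\le t\}}]$ at every $t$ outside the countable set of atoms of $s\mapsto\mu^X(\tau\le s)$.

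Combining the three displays at any $t$ lying in both co-countable ``good'' sets yields \eqref{eq.FB.ineq}; I would extend it to arbitrary $t\in[0,T)$ by right-continuity of both sides in $t$ (the LHS via $\Gamma_t=\bigcap_{s>t}\Gamma_s$ and dominated convergence, the RHS via $\bone_{\{\tau\le s\}}\downarrow\bone_{\{\tau\le t\}}$). The principal obstacle lies in the right-hand-side step: one must carefully identify $\tau^{i,N}$ with the absorption functional $\beta^N$ of the interpolated particles $Y^{i,N}$ of \eqref{eq.sec4.2.YiN.def}, and handle the discontinuous indicator against a limit law that may charge $\{s=t\}$. The same circle of ideas also explains why equality is lost in the limit: voids inside $\Gamma^N$ may shrink and collapse, inflating $\Gamma_t$ beyond the absorbed-particle content (the ``mushy region'' of Remark \ref{rem:mainRes.explain}) and turning the upper semicontinuity of Step 2 into a strict inequality in general.
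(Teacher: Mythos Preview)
Your proposal is correct and follows essentially the same approach as the paper's own proof: the discrete volume--absorption balance from Assumption~\ref{ass:2}, the upper-semicontinuity of $\int\varphi\,\bone_{\Gamma^N_t}$ via $\varepsilon$-neighborhoods at continuity times of $D$, subtraction of the seed term via Assumption~\ref{ass:2}, convergence of the absorbed-particle side via Remark~\ref{rem:beta.conv} and Portmanteau at non-atom times of $s\mapsto\mu^X(\tau\le s)$, and extension to all $t\in[0,T)$ by right-continuity. The only organizational difference is that the paper first extends the LHS bound \eqref{eq.sec5.DLA.is.Supersol.Pf.eq1} to all $t\in[0,T)$ before combining, whereas you intersect the two co-countable good sets and extend afterward; both orderings work.
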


\begin{proof}
As at the beginning of Subsection \ref{subse:proof}, we use the Skorokhod Representation Theorem to assume that $(\mu^X,D,\mu^{\wt X})$ is the almost sure limit of $(\mu^{X,N},D^N,\mu^{\wt X,N})_{N\in\NN}$ on $(\Omega,\mathcal{F},\PP)$. Moreover, we extend $X$ to be the canonical process on $\mathcal{D}([0,T],\RR^d)$, which does not change \eqref{eq.FB.ineq} as $\mu^X$ is $\PP$-almost surely supported in $C([0,T],\RR^d)$. Next, we notice that, for any $N\in\NN$, under Assumption \ref{ass:2}, the number of cubes from~$\mathcal{C}_N$ that are added to $\Gamma^N_{0-}$ by a time $t\in[0,T]$ equals the number of particles absorbed by time $t$, and every added cube contains exactly one absorbed particle.~Recalling $\Gamma^N_t=\{x\in\RR^d\!:D^N_t(x)=0\}$, one has $\PP$-almost surely, 
\begin{equation}\label{eq.sec5.DLA.is.Supersol.Pf.eq0}
\int_{\Gamma^N_t\backslash\Gamma^N_{0-}} \varphi(x)\,\mathrm{d}x
=\frac{1}{N}\sum_{i=1}^N \varphi(X^{i,N}_t)\,\mathbf{1}_{\{\tau^{i,N}\le t\}}
+\big(\sup_{\RR^d} |\nabla\varphi|\big) \,O(N^{-\frac{1}{d}})
\end{equation}
for all $t\in[0,T]$ and $\varphi\in C^{\infty}_c(\RR^d,[0,\infty))$.~Now, we employ the continuous interpolations $Y^{i,N}$ of $\wt X^{i,N}$, constructed via \eqref{eq.sec4.2.YiN.def}. We first use \eqref{eq.sec4.2.tauiN.eq.sigmaiN}, i.e., $\tau^{i,N}\wedge T=\beta^N(Y^{i,N})\wedge T$ and $\sup_{t\in[0,T]} |X^{i,N}_t-\pi_{\Gamma^N}(Y^{i,N})_t| \leq N^{-\frac{1}{d}}$. Then, writing $\mu^{Y,N}$ for the empirical measure of $\{Y^{i,N}\}_{i=1}^N$, we deduce from \eqref{eq.sec5.DLA.is.Supersol.Pf.eq0} that, $\PP$-almost surely,
\begin{equation*}\label{eq.sec5.DLA.is.Supersol.Pf.eq0.2}
\int_{\Gamma^N_t\backslash\Gamma^N_{0-}} \varphi(x)\,\mathrm{d}x
=\EE^{\mu^{Y,N}}\big[\varphi(\pi_{\Gamma^N}(X)_t)\,\bone_{\{\beta^N(X)\leq t\}}\big]
+\big(\sup_{\RR^d} |\nabla\varphi|\big) \,O(N^{-\frac{1}{d}})
\end{equation*}
for all $t\in[0,T)$ and $\varphi \in C^{\infty}_c(\RR^d,[0,\infty))$.~Remark \ref{rem:beta.conv} and the Continuous Mapping Theorem demonstrate further that, $\PP$-almost surely, for every continuity point of the function $t\mapsto\mu^X(\beta(X)\leq t)$ in $[0,T)$,
\begin{equation}\label{eq.sec5.DLA.is.Supersol.Pf.eq0.3}
\lim_{N\rightarrow\infty}\int_{\Gamma^N_t\backslash\Gamma^N_{0-}} \varphi(x)\,\mathrm{d}x
=\EE^{\mu^{X}}\big[\varphi(X_{t})\,\bone_{\{\beta(X)\leq t\}}\big],\;\;\varphi\in C^{\infty}_c(\RR^d,[0,\infty)).
\end{equation}
Note that $\mathbf{1}_{\{\beta(X)\leq t\}}=\mathbf{1}_{\{\tau\le t\}}$ since $\beta(X)\wedge T=\tau \wedge T$.

\medskip

Next, to analyze the behavior of the left-hand side in \eqref{eq.sec5.DLA.is.Supersol.Pf.eq0.3}, we derive
$$
\limsup_{N\rightarrow\infty}\int_{\RR^d} \varphi(x)\,\bone_{\Gamma^N_t}(x)\,\mathrm{d}x
\leq \int_{\RR^d} \varphi(x)\,\bone_{\Gamma_{t,\varepsilon}}(x)\,\mathrm{d}x,\;\;t\in[0,T),\;\;\varepsilon>0,
$$
$\PP$-almost surely, with the open $\varepsilon$-neighborhood $\Gamma_{t,\varepsilon}$ of $\Gamma_t$. The above is first obtained for (a countable subset of) the continuity times $t$ of $D$ in $[0,T)$, and then for all $t\in[0,T)$ by an approximation with continuity times from the right. Passing to the limit $\varepsilon\downarrow0$, we get, $\PP$-almost surely,
\begin{equation}\label{eq.sec5.DLA.is.Supersol.Pf.eq1}
\limsup_{N\rightarrow\infty}\int_{\RR^d} \varphi(x)\,\bone_{\Gamma^N_t}(x)\,\mathrm{d}x
\le\int_{\RR^d} \varphi(x)\,\bone_{\Gamma_t}(x)\,\mathrm{d}x,\;\;t\in[0,T).
\end{equation}
In addition, the first part of Assumption \ref{ass:2} implies
\begin{equation*}
\lim_{N\rightarrow\infty} \int_{\RR^d} \varphi(x)\,\bone_{\Gamma^N_{0-}}(x)\,\mathrm{d}x
=\int_{\RR^d} \varphi(x)\,\bone_{\Gamma_{0-}}(x)\,\mathrm{d}x.
\end{equation*}
Upon subtracting the above equation from \eqref{eq.sec5.DLA.is.Supersol.Pf.eq1}, we infer from \eqref{eq.sec5.DLA.is.Supersol.Pf.eq0.3} that
\begin{equation}\label{prop5.7eq1}
\int_{\Gamma_t\backslash\Gamma_{0-}} \varphi(x)\,\mathrm{d}x
\geq\EE^{\mu^{X}}\big[\varphi(X_\tau)\,\bone_{\{\tau\leq t\}}\big],\;\;\varphi\in C^{\infty}_c(\RR^d,[0,\infty)),
\end{equation}
for (a countable subset of) the continuity points of $t\mapsto\mu^X(\beta(X)\leq t)$ in $[0,T)$, $\PP$-almost surely. Approximating any $t\!\in\![0,T)$ from the right by such continuity points and taking the limit on both sides in~\eqref{prop5.7eq1} we establish \eqref{eq.FB.ineq} for all $t\in[0,T)$.
\end{proof}


We note that equation \eqref{eq.sec5.DLA.is.Supersol.Pf.eq0} in the latter proof gives a version of \eqref{eq.FB.ineq}  with an equality for the external MDLA process $(\mu^{X,N},D^N)$, and that the error term in \eqref{eq.sec5.DLA.is.Supersol.Pf.eq0} even vanishes for functions $\varphi$ which are constant on the interiors of the cubes in $\mathcal{C}_N$. Nonetheless, it turns out that, when $d=2$, the local growth condition \eqref{grow_local} may indeed hold with a strict inequality, for all admissible choices of $\chi$, see Example \ref{counterexample} below. This means that, in general, the limit points of external MDLA processes do not solve the 1SSP. It is important to record the physical interpretation of a strict inequality in~\eqref{grow_local}: it indicates that the density of the absorbed particles in the aggregate is too low, which occurs because of the accumulation of microscopic holes created when cubes get attached to the aggregate in a ``disorderly" fashion. Thus, the density of the aggregate is lower and its enthalpy is higher than normal. Such an object is sometimes referred to as a ``mushy region" (see, e.g., \cite{Vis}).

\begin{example}\label{counterexample}
In this example, we construct a limit point $(\mu^X,D)$ of a sequence of MDLA processes, such that, for any $\chi$ satisfying \eqref{eq.SecStefan.weakStefan.chi}, the growth condition \eqref{grow_local} holds with an inequality for all admissible test functions $\varphi$ and this inequality is strict for some $\varphi$.
	To construct the desired example, we take $d=2$ and consider the initial aggregates $\Gamma^N_{0-}:=\RR^2\backslash(-N^{-\frac{1}{2}}/2,\,-N^{-\frac{1}{2}}/2+R)^2$, for an arbitrary (fixed) integer $R>1$ and $N=n^2$, $n\in\NN$ (so that each $\Gamma^N_{0-}$ is a union of squares from $\mathcal C_N$). Clearly, 
	$$
	\Gamma^{n^2}_{0-}\underset{n\to\infty}{\longrightarrow}
	\RR^2\setminus(0,R)^2=:\Gamma_{0-},\quad \text{Leb}(\Gamma^{n^2}_{0-}\,\Delta\,\Gamma_{0-})\underset{n\to\infty}{\longrightarrow}0.
	$$
	The distribution of the initial particle locations $(\widetilde{X}^{1,n^2}_0,\,\ldots,\,\widetilde{X}^{n^2,n^2}_0)$, denoted by $\upsilon^{n^2}_0$, is chosen as follows. First, we define 
	$$
	P^n:=\big\{(x,y)\in \mathbf{Z}_{n^2}\backslash\Gamma^{n^2}_{0-}:\,xn\,\text{mod}\,R^2 = 0\big\}
	$$
	and $C^n$ as the union of all cubes in $\mathcal C_{n^2}$ whose centers belong to $P^n$. We then have
	$$
	|P^n| = Rn \lfloor n/R \rfloor \in[n^2-Rn, n^2],\quad\text{Leb}(C^n) = (R/n)\lfloor n/R \rfloor \in [1-R/n,1].
	$$
	The distribution $\upsilon^{n^2}_0$ is obtained by assigning $|P^n|$ particles, selected uniformly at random from the total of $n^2$ particles, to distinct elements of $P^n$ uniformly at random, and subsequently assigning the remaining $n^2-|P^n|$ particles uniformly at random to distinct elements of $\mathbf{Z}_{n^2}\backslash (\Gamma^{n^2}_{0-}\cup C^n)$ (so that no two particles occupy the same site).
	
	\medskip
	
	It is clear that $\upsilon^{n^2}_0$ is symmetric.
	Using $(n^2-|P^n|)/n^2\leq R/n$ and the convergence of Riemann sums to the corresponding integral, we deduce, for any bounded continuous function $f\!:\RR^2\rightarrow\RR$, 
	$$
	\frac{1}{n^2} \sum_{i=1}^{n^2} f\big(\wt X^{i,n^2}_0\big)
	\underset{n\to\infty}{\longrightarrow}
	\frac{1}{R^2}\int_{(0,R)^2} f(x)\,\mathrm{d}x\quad\text{almost surely}. 
	$$
	Hence, \cite[Remark 2.3]{Sznitman} yields that $(\upsilon^{n^2}_0)_{n\in\nn}$ is $u_0(x)\,\mathrm{d}x$-chaotic, where $u_0:=\frac{1}{R^2}\,\bone_{(0,R)^2}$, in the sense that the first $(\RR^2)^k$-marginal of $\upsilon_0^{n^2}$ tends to $(u_0(x)\,\mathrm{d}x)^{\otimes k}$ for every~$k\in\nn$. 
	We let the underlying particle system $(\wt X^{1,n^2},\,\ldots,\,\wt X^{n^2,n^2})$ perform a bond exclusion process, so that Assumptions \ref{ass:1} and \ref{ass:2} are satisfied with $\xi\sim u_0(x)\,\mathrm{d}x$ (see the discussion after Assumption \ref{ass:2} and Proposition \ref{prop:brownian_limit}). 
	
	\medskip
	
	Consider a limit point $(\mu^X,D)$ of the proposed sequence of MDLA processes (which exists by Proposition \ref{prop:existence}),  the canonical process $X$ on $C([0,T],\RR^2)$, and its absorption time $\tau:= \inf\{t\in[0,T]\!:X_t\in\Gamma_t\}$. Then, for $t=0$, the right-hand side in \eqref{grow_local} reads
	\begin{equation*}
		\EE^{\mu^X}\big[\varphi(X_{\tau})\,\bone_{\{\tau\leq 0\}}\big]
		= \EE^{\mu^X}\big[\varphi(X_0)\,\bone_{\{\tau=0\}}\big]
		= \frac{1}{R^2} \int_{\Gamma_0\backslash\Gamma_{0-}}\,\varphi(x)\,\mathrm{d}x,
	\end{equation*}
	where the second equality is due to Theorem \ref{thm:char}. We claim that $\Gamma_0=\RR^2$, so that the right-hand side of the above equation equals $\int_{(0,R)^2} \varphi(x)\,\mathrm{d}x/R^2$. Indeed, each particle initially located in~$P^n$ is immediately absorbed, and hence $(\Gamma^{n^2}_{0-}\cup C^n)\subset\Gamma^{n^2}_t$, $t\in[0,T]$. Consequently, the distance from any point in $\RR^2$ to $\Gamma^{n^2}_t$ does not exceed $R^2/n$. Since $\Gamma_0=\bigcap_{0<t\le T} \Gamma_t$ and $\Gamma^{n^2}_t\to\Gamma_t$ in the sense of the distance function for all continuity times $t$ of $D$, we infer that $\Gamma_0=\RR^2$. Finally, we can make $\int_{(0,R)^2} \varphi(x)\,\mathrm{d}x/R^2$ arbitrarily close to~$1$ by picking a non-decreasing sequence of $\varphi$ in $C^\infty_c(\RR^2,[0,\infty))$ tending to $1$ pointwise. However, for such a sequence, the left-hand side in \eqref{grow_local} converges~to 
	\begin{equation*}
		\int_{\RR^2} \chi(0,x)-\bone_{\Gamma_{0-}}(x)\,\mathrm{d}x=\int_{\Gamma_0\backslash\Gamma_{0-}} \chi(0,x)\,\mathrm{d}x=\text{Leb}(\Gamma_0\backslash\Gamma_{0-})=R^2>1
	\end{equation*}
	by \eqref{eq.SecStefan.weakStefan.chi}. So, \eqref{grow_local} eventually holds with the inequality ``$>$'' for the $\varphi$'s in our sequence.  
\end{example}

\begin{rmk}
	In Example \ref{counterexample}, put $u(t,\cdot\,)$ for the density of the restriction of the distribution of $X_t$ to $\RR^2\backslash\Gamma_t$ under~$\mu^X$, extended into $\Gamma_t$ by $0$. Then, there is no locally integrable $\chi\!:[0,T]\times\RR^2\to[0,\infty)$ that would render $(-u,\Gamma,\chi)$ a weak solution of the 1SSP with the initial condition $(-u_0,\Gamma_{0-})$, in the sense of \eqref{eq.SecStefan.weakStefan.PDE}--\eqref{eq.SecStefan.weakStefan.bdryCond}. Indeed, since $\Gamma_t=\RR^2$, $t\in[0,T]$ almost surely under $\mu^X$, we have $u(t,\cdot\,)\equiv0$, $t\in[0,T]$. For $t=0$ and $\varphi\in C^\infty_c(\RR^2,\RR)$ with $\varphi=1$ on~$\Gamma_0\backslash\Gamma_{0-}$, the condition \eqref{eq.SecStefan.weakStefan.PDE} thus becomes
	\begin{align*}
		&\int_{\RR^2} -\varphi(x)\,\chi(0,x)
		+\varphi(x)\big(u_0(x)+\bone_{\Gamma_{0-}}(x)\big)\,\mathrm{d}x = 0,
	\end{align*}
	which under the condition \eqref{eq.SecStefan.weakStefan.chi} amounts to
	$$
	\int_{\Gamma_0\backslash\Gamma_{0-}} u_0(x)\,\mathrm{d}x = \text{Leb}(\Gamma_0\backslash\Gamma_{0-}).
	$$
	This is false, as the left-hand side equals to $1$, while the right-hand side is $R^2>1$.
\end{rmk}

\appendix

\section{}\label{appendixA}

The appendix is concerned with the behavior of the underlying particle system 
$(\wt X^{1,N}_t,\,\wt X^{2,N}_t,\,\ldots,\,\wt X^{N,N}_t)_{t\in[0,T+1]}$ that follows the bond exclusion process described after Assumption \ref{ass:2}.~Proposition \ref{prop:brownian_limit} is a $d$-dimensional generalization of \cite[Theorem~3.3]{Sznitman}. Although the proof therein can be adapted to any $d>1$ with only a few changes, we give here an alternative, somewhat more direct, proof.

\begin{proposition}\label{prop:brownian_limit}
Let the joint distribution $\upsilon_0^N$ of $(\wt X_0^{1,N},\,\ldots,\,\wt X_0^{N,N})$ be symmetric and the empirical measure of the initial locations $\{\wt X^{i,N}_0\}_{i=1}^N$ converge to an $\upsilon_0\in\mathcal{P}(\RR^d)$ weakly in probability as $N\to\infty$. Then, as $N\to\infty$, the empirical measure $\mu^{\wt X,N}$ of the paths~$\{\wt X^{i,N}\}_{i=1}^N$ converges to $\mu^{\wt X}$, the distribution on $\mathcal{D}([0,T+1],\RR^d)$ of a Brownian motion with the initial law $\upsilon_0$, weakly in probability.
\end{proposition}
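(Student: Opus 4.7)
The strategy is the standard tightness-plus-martingale-problem approach, with the real work being to show that the random empirical measure concentrates on its deterministic limit. Two observations drive the argument. First, in the bond-exclusion dynamics, every ring of a bond adjacent to $\wt X^{i,N}_{t-}$ moves the particle across it irrespective of whether a swap with another particle occurs; thus, conditionally on the other particles, $\wt X^{i,N}$ evolves as a CTSSRW of rate $dN^{2/d}$ and jump size $N^{-1/d}$, and in particular is such a CTSSRW in the \emph{full} natural filtration $\mathcal{F}^N$ of the process. Second, by the assumed symmetry of $\upsilon^N_0$ the paths $\{\wt X^{i,N}\}_{i=1}^N$ are exchangeable, and the convergence in probability of $\tfrac{1}{N}\sum_i\delta_{\wt X^{i,N}_0}$ to $\upsilon_0$ forces the marginal law of $\wt X^{1,N}_0$ to converge to $\upsilon_0$ (cf.~\cite[Proposition~2.2]{Sznitman}).

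\textbf{Tightness and first properties of the limit.} By the exchangeability criterion of \cite[Proposition~2.2]{Sznitman}, tightness of the laws of $(\mu^{\wt X,N})_{N\in\nn}$ on $\mathcal{P}(\mathcal{D}([0,T+1],\RR^d))$ reduces to tightness of the single-particle law, which holds by Donsker's invariance principle applied to $\wt X^{1,N}$. Any weak limit $\eta$ is a.s.\ concentrated on $C([0,T+1],\RR^d)$ because jump sizes are $O(N^{-1/d})$, and has time-$0$ marginal $\upsilon_0$ almost surely. It remains to identify its law; by uniqueness for the Brownian martingale problem it suffices to verify that, for every $\varphi\in C^\infty_c(\RR^d,\RR)$, $0\le s<t\le T+1$ and bounded continuous $F:\mathcal{D}([0,s],\RR^d)\to\RR$,
\[
\EE^\eta\Bigl[\Bigl(\varphi(\omega_t)-\varphi(\omega_s)-\tfrac{1}{2}\!\int_s^t\!\Delta\varphi(\omega_u)\,du\Bigr)F(\omega|_{[0,s]})\Bigr]=0\quad\text{a.s.}
\]

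\textbf{Reduction to a variance estimate.} Introduce the discrete generator $\mathcal{L}_N\varphi(x):=\tfrac{N^{2/d}}{2}\sum_{e}(\varphi(x+N^{-1/d}e)-\varphi(x))$, summed over the $2d$ unit coordinate vectors. Then $M^{\varphi,N,i}_t:=\varphi(\wt X^{i,N}_t)-\varphi(\wt X^{i,N}_0)-\int_0^t\mathcal{L}_N\varphi(\wt X^{i,N}_u)\,du$ is an $\mathcal{F}^N$-martingale by the first observation above, and Taylor expansion gives $\|\mathcal{L}_N\varphi-\tfrac{1}{2}\Delta\varphi\|_\infty=O(N^{-2/d})$. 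Setting $Z^{i,N}:=(M^{\varphi,N,i}_t-M^{\varphi,N,i}_s)\,F(\wt X^{i,N}|_{[0,s]})$ we have $\EE[Z^{i,N}]=0$, and the integral in the previous display, evaluated against $\mu^{\wt X,N}$, equals $\tfrac{1}{N}\sum_i Z^{i,N}+O(N^{-2/d})$. By exchangeability,
\[
\mathrm{Var}\Bigl(\tfrac{1}{N}\textstyle\sum_i Z^{i,N}\Bigr)=\tfrac{1}{N}\mathrm{Var}(Z^{1,N})+\tfrac{N-1}{N}\mathrm{Cov}(Z^{1,N},Z^{2,N}),
\]
the first term being $O(1/N)$, so it remains to control the covariance.

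\textbf{The key swap bound.} Conditioning on $\mathcal{F}^N_s$ and invoking the martingale property yields $|\mathrm{Cov}(Z^{1,N},Z^{2,N})|\le\|F\|_\infty^2\,\EE|\langle M^{\varphi,N,1},M^{\varphi,N,2}\rangle_t|$. Particles $1$ and $2$ jump simultaneously only at \emph{swap} events (rings of the bond currently between them); each contributes at most $\|\nabla\varphi\|_\infty^2\,N^{-2/d}$ to the joint quadratic variation, and occurs at predictable rate $\tfrac{N^{2/d}}{2}\mathbf{1}_{\{\wt X^{1,N}_u\sim\wt X^{2,N}_u\}}$, where ``$\sim$'' denotes lattice adjacency. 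Hence
\[
\EE\bigl|\langle M^{\varphi,N,1},M^{\varphi,N,2}\rangle_t\bigr|\le C\int_0^t\PP(\wt X^{1,N}_u\sim\wt X^{2,N}_u)\,du.
\]
Using exchangeability and the trivial exclusion bound $\eta(x)\eta(y)\le\eta(x)$,
\[
\PP(\wt X^{1,N}_u\sim\wt X^{2,N}_u)=\tfrac{1}{N(N-1)}\sum_{x\sim y}\EE[\eta_u(x)\eta_u(y)]\le\tfrac{2d}{N(N-1)}\EE\Bigl[\sum_x\eta_u(x)\Bigr]=\tfrac{2d}{N-1}.
\]
This gives $\mathrm{Cov}(Z^{1,N},Z^{2,N})=O(1/N)$, hence $\tfrac{1}{N}\sum_i Z^{i,N}\to 0$ in $L^2$; applying the Portmanteau theorem to the $\eta$-a.e.\ continuous and bounded functional appearing in the martingale display (Skorokhod embedding along the subsequence) yields the required a.s.\ identity, completing the identification of $\eta$. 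The main technical point is thus the control of the expected swap rate between two tagged particles; thanks to the exclusion constraint, the crude bound $\eta(x)\eta(y)\le\eta(x)$ is comfortably sufficient, and one need not appeal to the sharper negative-association estimate $\EE[\eta_u(x)\eta_u(y)]\le\EE[\eta_u(x)]\,\EE[\eta_u(y)]$ available for symmetric exclusion.
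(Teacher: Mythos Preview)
Your proof is correct and follows the same global architecture as the paper (tightness via \cite[Proposition~2.2]{Sznitman} and Donsker, identification of the limit via the martingale problem, reduction to a two-particle covariance by exchangeability), but the way you dispatch the covariance is genuinely different and in fact more elementary. The paper proves that $(\wt X^{1,N},\wt X^{2,N})$ converges in law to $(\mu^{\wt X})^{\otimes 2}$ by writing the two-particle generator as a sum of discrete Laplacians plus an interaction term supported on $\{|x^1-x^2|_1=N^{-1/d}\}$, and then shows that the expected adjacency time $I_N=\EE\int_0^T \mathbf{1}_{\{\wt X^{1,N}_t\sim\wt X^{2,N}_t\}}\,dt$ vanishes through a somewhat delicate coupling with a reflected simple random walk on $\{0,1,\ldots\}$. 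You sidestep this entirely: by bounding $\langle M^{\varphi,N,1},M^{\varphi,N,2}\rangle$ via the swap compensator and using the one-line exclusion count $\PP(\wt X^{1,N}_u\sim\wt X^{2,N}_u)\le 2d/(N-1)$, you obtain $\mathrm{Cov}(Z^{1,N},Z^{2,N})=O(1/N)$ directly. This is strictly sharper (it yields $I_N=O(1/N)$ rather than merely $o(1)$) and avoids the comparison construction. On the other hand, the paper's route gives a bit more, namely full two-particle propagation of chaos for arbitrary bounded continuous test functions, whereas yours targets only the specific martingale functionals needed for identification; either suffices for the proposition.
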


\begin{proof} 
Note that the first particle $\wt X^{1,N}$ is a simple symmetric random walk on $\mathbf{Z}_N$ with the jump rate $N^{\frac{2}{d}}d$. Since the first marginal of $\upsilon_0^N$ converges to $\upsilon_0$ as $N\to\infty$ (see \cite[Remark 2.3]{Sznitman}), the law of $\wt X^{1,N}$ on $\mathcal{D}([0,T+1],\RR^d)$ tends to the Wiener measure with the initial distribution $\upsilon_0$. In addition, \cite[Proposition~2.2~ii)]{Sznitman} yields the tightness of the sequence $(\mu^{\wt X,N})_{N\in\NN}$ in $\mathcal{P}(\mathcal{D}([0,T+1],\RR^d))$. Upon restricting $N$ to a subsequence along which $\mu^{\wt X,N}$ converges in distribution to a random element $\widetilde{\mu}$ of $\mathcal{P}(\mathcal{D}([0,T+1],\RR^d))$, we use the Skorokhod Representation Theorem to assume, without loss of generality, that this convergence holds almost surely.  
Our goal is then to show that $\widetilde{\mu}=\mu^{\wt X}$ almost surely. 

\medskip

By the symmetry of the joint distribution of $\{\wt X^{i,N}\}_{i=1}^N$ we have, for all bounded and uniformly continuous functions $f\!:\mathcal{D}([0,T+1],\RR^d)\to\RR$,
\begin{equation*}
	\EE\bigg[\int_{\mathcal{D}([0,T+1],\RR^d)} \! f\,\mathrm{d}\mu^{\wt X,N} \bigg] \!
	=\EE\bigg[\frac{1}{N}\sum_{i=1}^N f(\wt X^{i,N})\bigg] \!
	= \EE\big[f(\wt X^{1,N})\big]
	\underset{N\to\infty}{\longrightarrow} \int_{\mathcal{D}([0,T+1],\RR^d)} \! f\,\mathrm{d}\mu^{\wt X}.
\end{equation*}
On the other hand, the leftmost expression in this display tends to the expectation of
\begin{equation*}
	\int_{\mathcal{D}([0,T+1],\RR^d)} \! f\,\mathrm{d}\wt\mu
\end{equation*}
along the subsequence in consideration. Thus, it suffices to prove that the variance of the above random variable is $0$, which can be reduced further, via the Dominated Convergence Theorem, to the convergence to $0$ of the variances of the integrals of $f$ with respect to $\mu^{\wt X,N}$, $N\in\NN$. 
Using symmetry again, we find that 
\begin{equation*}
	\text{Var}\bigg(\!\int_{\mathcal{D}([0,T+1],\RR^d)} f\,\mathrm{d}\mu^{\wt X,N}\!\bigg) 
	\!=\!\frac{1}{N^2}\sum_{i,j=1}^N \!\text{Cov}\big(f(\wt X^{i,N}),f(\wt X^{j,N})\!\big)
	\!=\!\mathrm{Cov}\big(f(\wt X^{1,N}),f(\wt X^{2,N})\!\big).
\end{equation*}
To prove that the obtained covariance tends to $0$ as $N\to\infty$, it is enough to verify that the joint law of $(\wt X^{1,N},\wt X^{2,N})$ converges to $\mu^{\wt X}\otimes\mu^{\wt X}$ as $N\to\infty$.

\medskip

To this end, we introduce the rescaled basis vectors $\wt e_k:=N^{-\frac{1}{d}}e_k$, $k=1,\,\ldots,\,d$, in $\RR^d$, and notice that the Markov process $(\wt X^{1,N},\wt X^{2,N})$ on $(\mathbf{Z}_N)^2$ has the generator
\begin{align*}
	(\mathcal{L}_Ng)(x^1,x^2)=&\,\frac{N^{\frac{2}{d}}}{2}\sum_{k=1}^d \big(g(x^1+\wt e_k,x^2)+g(x^1-\wt e_k,x^2)-2g(x^1,x^2) \\
	&\qquad\quad\;\;\;+g(x^1,x^2+\wt e_k)+g(x^1,x^2-\wt e_k)-2g(x^1,x^2)\big) \\
	&\,-\frac{N^{\frac{2}{d}}}{2}\,\bone_{\{|x^1-x^2|_1=N^{-\frac{1}{d}}\}}\lb g(x^1,x^1)-g(x^1,x^2)-g(x^2,x^1)+g(x^2,x^2)\rb\\
	=:&\,\frac{1}{2}(\Delta^1_Ng+\Delta^2_Ng)(x^1,x^2)-\frac{N^{\frac{2}{d}}}{2}\,\bone_{\{|x^1-x^2|_1=N^{-\frac{1}{d}}\}}(\Theta g)(x^1,x^2)
\end{align*}
for all $g\in C_c^2(\RR^{2d},\RR)$, with $|x|_1:=\sum_{k=1}^d |x_k|$. In particular, for such $g$, the process
\begin{equation*}
	g(\wt X_t^{1,N},\wt X_t^{2,N})-g(\wt X_0^{1,N},\wt X_0^{2,N})-\int_0^t(\mathcal{L}_Ng)(\wt X_s^{1,N},\wt X_s^{2,N})\,\mathrm{d}s,\;\;t\in[0,T+1]
\end{equation*}
is a martingale. As the laws of $\wt X^{1,N}$, $\wt X^{2,N}$ tend to $\mu^{\wt X}$, the sequence $(\wt X^{1,N},\wt X^{2,N})_{N\in\NN}$ is $C$-tight. To identify the limit points with $\mu^{\wt X}\otimes\mu^{\wt X}$, we recall the martingale problem on $C([0,T+1],\rr^{2d})$ characterizing $\mu^{\wt X}\otimes\mu^{\wt X}$, and thus aim to check that, for all $k\in\NN$, $0\leq t_1<\cdots<t_k\leq T_1<T_2\le T+1$ and $h_1,\,\ldots,\,h_k\in C_c(\RR^{2d},\RR)$,
\begin{align*}
	\EE\bigg[\bigg(g(\wt X_{T_2}^{1,N},\wt X_{T_2}^{2,N})
	\!-\! g(\wt X_{T_1}^{1,N},\wt X_{T_1}^{2,N})
	\!-\!\int_{T_1}^{T_2} \frac{1}{2}(\Delta g)(\wt X_t^{1,N},\wt X_t^{2,N})\,\mathrm{d}t\bigg)
	\prod_{l=1}^k h_l(\wt X^{1,N}_{t_l},\wt X^{2,N}_{t_l})\bigg]
\end{align*}
converges to $0$ as $N\to\infty$. This statement is equivalent to 
\begin{equation*}
	\EE\bigg[\bigg(\int_{T_1}^{T_2} (\mathcal{L}_Ng)(\wt X_t^{1,N},\wt X_t^{2,N})-\frac{1}{2}(\Delta g)(\wt X_t^{1,N},\wt X_t^{2,N})\,\mathrm{d}t\bigg)
	\prod_{l=1}^k h_l(\wt X^{1,N}_{t_l},\wt X^{2,N}_{t_l})\bigg]
	\underset{N\to\infty}{\longrightarrow}0.
\end{equation*}
Moreover, the Lagrange form of Taylor's Theorem reveals that
\begin{equation*}
	\Delta^1_Ng\!+\!\Delta^2_Ng\underset{N\to\infty}{\longrightarrow}\Delta g\;\;\text{uniformly},
	\quad\sup_{N\in\NN}\,\sup_{x^1,x^2\in\mathbf{Z}_N:\,|x^1-x^2|_1=N^{-\frac{1}{d}}} N^{\frac{2}{d}}(\Theta g)(x^1,x^2)<\infty. 
\end{equation*}
Therefore, it is enough to show that  
\begin{equation}\label{eq.AppendixA.mainProp.IN.def}
	I^{T_1,T_2}_N:=\EE\bigg[\int_{T_1}^{T_2} \bone_{\{|\wt X_t^{1,N}-\wt X_t^{2,N}|_1=N^{-\frac{1}{d}}\}}\,\mathrm{d}t\bigg]\underset{N\to\infty}{\longrightarrow}0.
\end{equation}

\smallskip

In order to establish \eqref{eq.AppendixA.mainProp.IN.def}, we consider the process
\begin{equation*}
	Y_t^N:=N^{\frac{1}{d}} |\wt X_t^{1,N}-\wt X_t^{2,N}|_1-1,\;\;t\in[0,T+1],
\end{equation*}
on $\{0,\,1,\,\ldots\}$. Note that, when $Y^N_t=n\ge 1$, it jumps to $(n+1)$ at the rate $(d+m)N^{\frac{2}{d}}$ and to $(n-1)$ at the rate $(d-m)N^{\frac{2}{d}}$, where $(d-m)$ is the Hamming distance\footnote{The Hamming distance between two vectors is the number of coordinates in which they differ.} between $\wt X_t^{1,N}$ and $\wt X_t^{2,N}$. When $Y^N_t=0$, it jumps to $1$ at the rate $(2d-1/2)N^{\frac{2}{d}}$. Next, we construct a Markov process $\widetilde{Y}^N\le Y^N$ on $\{0,\,1,\,\ldots\}$ with the generator
\begin{equation}\label{gen_on_N}
	\bone_{\{n\ge 1\}}\,dN^{\frac{2}{d}}\big(g(n+1)+g(n-1)-2g(n)\big)
	+\bone_{\{n=0\}}(2d-1/2)N^{\frac{2}{d}}\big(g(1)-g(0)\big).
\end{equation}
Specifically, $\widetilde{Y}^N_0:=Y^N_0$; whenever $\widetilde{Y}^N_{t-}=Y^N_{t-}=n\ge1$ and $Y^N_t=n-1$, we set $\widetilde{Y}^N_{t}=n-1$; whenever $\widetilde{Y}^N_{t-}=Y^N_{t-}=n\ge1$ and $Y^N_t=n+1$, we set $\widetilde{Y}^N_t=n-1$ with the probability $m/(d+m)$ and $\widetilde{Y}^N_t=n+1$ with the probability $d/(d+m)$, where $(d-m)$ is the Hamming distance between $\wt X_{t-}^{1,N}$ and $\wt X_{t-}^{2,N}$; whenever $\widetilde{Y}^N_{t-}=Y^N_{t-}=0$ and $Y^N_t=1$, we set $\widetilde{Y}^N_t=1$; and, whenever $\widetilde{Y}^N_t<Y^N_t$, the process $\widetilde{Y}^N$ evolves independently of $Y^N$ (according to its generator).
Finally, we let $S^N$ be a simple symmetric random walk on $\ZZ$ with the jump rate $dN^{\frac{2}{d}}$ and
$$
r(t):=\int_0^t \bone_{\{S^N_{r'}>0\}} + \frac{2d-1/2}{2d}\,\bone_{\{S^N_{r'}=0\}}\,\mathrm{d}r',\;\;t\ge0.
$$
Then, $|S_{r(\cdot)}^N|$ is a Markov process on $\{0,\,1,\,\ldots\}$ with the generator \eqref{gen_on_N}. Hence, 
\begin{equation*}
	I^{T_1,T_2}_N=\EE\bigg[\int_{T_1}^{T_2} \bone_{\{Y_t^N=0\}}\,\mathrm{d}t\bigg]
	\le\EE\bigg[\int_{T_1}^{T_2} \bone_{\{S_{r(t)}^N=0\}}\,\mathrm{d}t\bigg]
	\leq\frac{2d}{2d-1/2}\,\EE\bigg[\int_0^{T_2} \bone_{\{S^N_r=0\}}\,\mathrm{d}r\bigg].
\end{equation*}
Moreover, for any $\eps>0$,
\begin{equation*}
	\EE\bigg[\int_0^{T_2} \bone_{\{S^N_r=0\}}\,\mathrm{d}r\bigg]
	\leq\int_0^{T_2}\PP\big(N^{-\frac{1}{d}}\,S_r^N\in[-\eps,\eps]\big)\,\mathrm{d}r
	\underset{N\to\infty}{\longrightarrow} \int_0^{T_2} \PP\big(\sqrt{d}\,B_r\in[-\eps,\eps]\big)\,\mathrm{d}r,
\end{equation*}
where $B$ is a standard Brownian motion. Taking $\varepsilon\downarrow0$, we complete the proof.
\end{proof}

\bigskip

\bibliography{bibliography}
\bibliographystyle{amsplain}

\bigskip\bigskip

\end{document}